\documentclass{amsart}     % Basic GT/GTM/AGT style
\usepackage{graphicx}  %%% the recommended graphics package
\usepackage[percent]{overpic}
\usepackage{enumerate}
\usepackage{enumitem}
\usepackage{thm-restate}
\usepackage{hyperref}
\usepackage{cleveref}
\usepackage{todonotes}
\usepackage{lscape}
\usepackage{multirow}
\usepackage{MnSymbol}
\usepackage{amsmath}

\newcommand{\rmv}{\smallsetminus}
\newcommand{\bd}{\partial}
\newcommand{\set}[1]{ \left\{ #1 \right\} }

\newcommand{\io}{\iota}

\title{Genus two Goeritz equivalence in lens spaces $L(p, 1)$}

\author{Brandy Doleshal}
\address{Brandy Doleshal
\newline Sam Houston State University
\newline Huntsville, TX
\newline USA}
\email{bdoleshal@shsu.edu}
\urladdr{}

\author{Matt Rathbun}
\address{Matt Rathbun
\newline California State University, Fullerton
\newline Fullerton, CA
\newline USA}
\email{mrathbun@fullerton.edu}
\urladdr{}
\newtheorem{thm}{Theorem}[section]    % Standard theorem environment
\newtheorem{lem}[thm]{Lemma}          % Lemma environment with numbering 
\theoremstyle{definition}
\newtheorem{rem}{Remark}             % Unnumbered environment for remarks.

\newtheorem{claim}[thm]{Claim}

\begin{document}

\maketitle

\begin{abstract}
In this paper, we consider the action of the Goeritz group $\mathcal G_p$ for the genus two Heegaard splitting of the lens space $L(p,1)$ with $p\ge 2$ on the homology of the Heegaard surface. We describe the action in terms of matrices in $GL(4, \mathbb Z)$, and provide homology and homotopy obstructions for when two curves in the Heegaard surface are  Goeritz equivalent. 
\end{abstract}

\section{Introduction}

The Berge conjecture has been a motivating goal for low-dimensional topology for the last 50 years, using special positions of a knot on a genus 2 Heegaard splitting and the induced surface slopes to characterize the kinds of manifolds that can arise from Dehn surgery on the knot. This open conjecture has driven a great deal of research to answer the question (e.g., \cite{GreLSRP}, \cite{HedFHBCKALSS}, \cite{NiKFHDFK}), as well as variations and generalizations (e.g., \cite{BakHofLicJPKSBLS}, \cite{DeaSSFDSHK}, \cite{LiuLSSL}).

In exploring some of these variations, some surprising results have been found, including \cite{AmoDolRatACPTTK}, \cite{EudOHKSFDS}, and \cite{GunKDPPPSR}, which find infinite families of knots, each with inequivalent positions on a genus 2 Heegaard splitting in $S^3$ but with equal induced surface slopes and corresponding surgeries giving rise to the same non-hyperbolic manifolds. The latter two authors of \cite{AmoDolRatACPTTK} endeavored to build a larger framework for simplifying the question of when two positions of a knot on a genus 2 Heegaard splitting are inequivalent, via the action of the Goeritz group. 

The \textit{Goeritz group} of a Heegaard splitting of a $3$-manifold is the group of isotopy classes of orientation-preserving automorphisms of the manifold preserving the Heegaard splitting, set-wise. In \cite{DolRatG2GES3}, we provide homological obstructions for two curves on the (unique) genus 2 Heegaard surface in $S^3$ to be related by Goeritz group elements. The current work continues this enterprise. Building on work of Cho \cite{ChoG2GGLS}, which provides a presentation for each of the Goeritz groups for the (unique) genus 2 Heegaard splitting of lens spaces, $L(p, 1)$, we are able to emulate the methods of \cite{DolRatG2GES3}, and provide homology obstructions in the same spirit, albeit more byzantine in some cases.

In Section \ref{section:GoeritzGroupLp1}, we present background on the Goeritz group for $L(p,1)$ and the induced action on the homology of a genus 2 Heegaard surface. In Section \ref{section:Star}, we explore the images and kernels of these induced actions, and in Section \ref{section:Obstructions} we provide obstructions to the existence of such actions. We prove Theorem \ref{thm:linalgobstruction}, which provides a collection of relatively straightforward necessary arithmetic conditions for a Goeritz action between two curves in terms of their homology vectors. Theorem \ref{thm:vectorlist} spells out seven explicit relationships between the homology vectors' curves, at least one of which must be satisfied in order for the curves to be Goeritz equivalent. Finally, in Theorem \ref{thm:HomotopyObstruction}, we provide an obstruction to two curves being related by a Goeritz element in terms of their fundamental group representatives in each handlebody.

\section{The Goeritz group of $L(p, 1)$}
\label{section:GoeritzGroupLp1}

Observe that every Heegaard splitting of a lens space is standard: a stabilization of the unique genus 1 Heegaard splitting. We will exclusively examine genus 2 Heegaard splittings, so we will often simply refer to \emph{the} Goeritz group of the lens space $L(p,1)$, for $p \geq 2$. Cho uses the complex of primitive disks to provide a presentation for each of the Goeritz groups of $L(p,1)$.

\begin{thm}[Cho \cite{ChoG2GGLS}]\label{thm:Cho}
For $p \ge 2$, a presentation for the genus-2 Goeritz group $\mathcal G_p$ for the lens space $L(p,1)$ is given by:
\begin{enumerate}
\item $\langle \, \beta, \, \rho, \, \gamma \ \mid \ \rho^4 = \gamma^2 = (\gamma \rho)^2 = \rho^2 \beta \rho^2 \beta^{-1} = 1 \, \rangle$ if $p = 2$, 
\item $\langle \, \alpha \ \mid \ \alpha^2 = 1 \, \rangle \bigoplus \langle \, \beta, \, \delta, \, \gamma \ \mid \ \delta^3 = \gamma^2 = (\gamma \delta)^2 = 1 \, \rangle$ if $p = 3$, and
\item $\langle \, \alpha \ \mid \ \alpha^2 = 1 \, \rangle \bigoplus \langle \, \beta, \, \gamma, \, \sigma \ \mid \ \gamma^2 = \sigma^2 = 1 \, \rangle$ if $p \ge 4$.
\end{enumerate}
\end{thm}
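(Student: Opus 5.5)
The strategy is Cho's: let $\mathcal G_p$ act on a simplicial complex built from primitive disks, show that complex is a tree (contractible), and read off a presentation from the quotient graph of groups via Bass--Serre theory. Write $L(p,1)=V\cup_\Sigma W$ for the genus two splitting. A nonseparating disk $E\subset V$ is \emph{primitive} if some disk $E'\subset W$ meets $\partial E$ transversely in a single point. The \emph{primitive disk complex} $\mathcal P(V)$ is the full subcomplex of the disk complex of $V$ spanned by primitive disks. I would prove three things: (i) $\mathcal P(V)$ is connected and in fact a tree of a specific shape; (ii) the stabilizers in $\mathcal G_p$ of its vertices and edges are finite groups, which can be described explicitly; (iii) $\mathcal G_p$ acts on $\mathcal P(V)$ with a small quotient, so $\mathcal G_p$ is the fundamental group of a finite graph of finite groups.

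Step (i) is a surgery argument, in the style of the genus two results of Scharlemann and Akbas for $S^3$. Take primitive disks $D,E$ that meet. An outermost subdisk of $E$ cut off by $D\cap E$ surgers $D$ into two disks. One shows that at least one of them is again primitive and meets $E$ in fewer arcs. The tool here is the classification of primitive elements of the free group $\pi_1(W)=\langle x,y\rangle$ realized by $\partial D$. For $L(p,1)$ the dual curves represent words of the form $x^{\pm p}$-type, and the key input is a lemma that any primitive disk $D$ is dual to a disk $E'\subset W$ whose boundary is $xy^{\epsilon p}$-like. Induction on $|D\cap E|$ then gives connectivity. Contractibility comes from showing that the link of each vertex is connected, which forces the complex to be a tree of simplices: 2-simplices glued along edges, i.e.\ a "tree of triangles". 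The dependence on $p$ enters through how many primitive disks are disjoint from a given pair. For $p=2$ extra symmetries appear, for $p=3$ each edge lies in more triangles, and for $p\ge4$ the complex degenerates to a tree whose structure is essentially a line-like bipartite tree.

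In steps (ii) and (iii) I would pass to the barycentric subdivision, or the dual tree whose vertices are the simplices, and check that $\mathcal G_p$ acts transitively on primitive pairs and triples up to the relevant symmetry. The stabilizer of a primitive pair $\{D,E\}$ (or triple) then preserves a standard decomposition of the handlebodies into balls. So it is a finite group of "symmetries of the standard picture", generated by half-twists and the hyperelliptic involution $\alpha$. Computing these finite groups case by case gives $\rho,\gamma$ with $\rho^4=\gamma^2=(\gamma\rho)^2=1$ (dihedral) for $p=2$, $\delta,\gamma$ generating a dihedral group of order $6$ for $p=3$, and $\gamma,\sigma$ of order two for $p\ge 4$. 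The element $\beta$ is a simple closed curve twist-type element of infinite order, which corresponds to the loop in the quotient graph or to the HNN-type edge. The hyperelliptic involution $\alpha$ is central and splits off as a direct factor, except when $p=2$, where it coincides with $\rho^2$; this accounts for the relation $\rho^2\beta\rho^2\beta^{-1}$. Applying Bass--Serre theory to the quotient graph of groups then yields the three presentations.

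The main obstacle is step (i): the surgery argument that $\mathcal P(V)$ is connected and simply connected. This requires fine control of which words in $\pi_1(W)$ are primitive and how surgery changes them, with $p$-dependent case analysis. I would first establish the lemma characterizing the dual disks of primitive disks in $L(p,1)$ (Cho's "$p$-primitive" criterion) and then run the outermost-arc induction. The identification of the exact stabilizers in step (ii) is the second delicate point. There I would argue that any Goeritz element fixing the relevant simplex is determined up to isotopy by its action on the cut-open balls, which reduces the question to a finite check.
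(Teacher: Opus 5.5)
The paper gives no proof of this theorem; it quotes Cho. Your outline matches Cho's route in broad strokes: let $\mathcal{G}_p$ act on the primitive disk complex $\mathcal{P}(V)$ and read off a presentation by Bass--Serre theory. However, two steps of your plan would fail as written.

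\textbf{Step (i): the shape of $\mathcal{P}(V)$ and the contractibility criterion.} Connected vertex links do not imply contractibility; a triangulated torus has connected links. They also do not describe these complexes.
\begin{itemize}
\item For $p=2$ and $p\ge 4$, no primitive disk other than $D,E$ is disjoint from a primitive pair $\{D,E\}$. So $\mathcal{P}(V)$ is a $1$-dimensional tree in which every vertex has infinite valence. Its links are discrete, and it is nothing like a ``line-like'' tree.
\item Only for $p=3$ are there $2$-simplices, and each edge lies in exactly one of them. Triangles therefore meet only at vertices, and $\mathcal{P}(V)$ retracts onto the bipartite tree whose vertices are primitive disks and primitive triples.
\end{itemize}
Your ``2-simplices glued along edges'', with ``more triangles per edge'' at $p=3$, would give the wrong quotient. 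Contractibility instead comes directly from the surgery property you describe, via Cho's criterion. A full subcomplex of the disk complex is contractible if, whenever two of its vertices intersect, one of the two disks obtained by surgery along an outermost subdisk is again a vertex. The $p$-dependence then reduces to deciding which disks disjoint from a primitive pair are primitive. That needs two things: a normal form for the two words in $\pi_1(W)$ carried by the boundaries of a primitive pair in $V$, and the classification of primitive elements of the free group of rank two. Your ``key lemma'' is not this, and as stated it does not make sense: the boundary of a disk $E'\subset W$ is trivial in $\pi_1(W)$.

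\textbf{Steps (ii)--(iii): stabilizers and the role of $\beta$.} This is the more serious gap. Vertex stabilizers are not finite, and $\beta$ is not an HNN stable letter.
\begin{itemize}
\item As described in the proof of Lemma~\ref{lemma:matrices}, $\beta$ preserves the primitive disk $Y\subset V$ bounded by $y$. It has infinite order, since $\beta^2$ is the twist about the belt curve.
\item Likewise $\beta\gamma$ fixes $Y$, and its powers carry the dual disk bounded by $x$ to infinitely many dual disks, of classes $\pm(\mathbf{x}+n\mathbf{a})$.
\item $\mathcal{G}_p$ is transitive on primitive disks and on edges (on triangles when $p=3$). So the quotient of the relevant tree is a single edge with no loop, and $\beta$ can only come from a vertex group.
\end{itemize}
Indeed, the three presentations in the statement are exactly the amalgams $A*_C B$, where $A=\langle\alpha\rangle\oplus(\langle\beta\rangle*\langle\gamma\rangle)$ plays the role of $\mathrm{Stab}(Y)$, with $\alpha=\rho^2$ when $p=2$. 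Here $B$ is the stabilizer of an edge (a triangle for $p=3$) at $Y$, and $C=A\cap B$:
\begin{itemize}
\item for $p\ge 4$: $B=\langle\alpha\rangle\oplus\langle\sigma\rangle$ and $C=\langle\alpha\rangle$;
\item for $p=3$: $B=\langle\alpha\rangle\oplus\langle\delta,\gamma\rangle$ and $C=\langle\alpha\rangle\oplus\langle\gamma\rangle$;
\item for $p=2$: $B=\langle\rho,\gamma\rangle$ and $C=\langle\rho^2,\gamma\rangle$.
\end{itemize}
This is consistent with the matrices. For example, $\gamma_*$ fixes $\pm\mathbf{y}$, and for $p=3$ it swaps the classes $\pm\delta_*\mathbf{y}$ and $\pm\delta_*^2\mathbf{y}$. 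Note that for $p\ge4$, $\gamma$ and $\sigma$ generate an infinite dihedral group, so they cannot lie in one finite stabilizer.

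So only the edge or triangle groups are finite. The essential computation missing from your plan is the infinite group $\mathrm{Stab}(Y)$, which needs its own argument. One option is its action on the disks in $W$ dual to $Y$, together with a description of the stabilizer of a dual pair, that is, of a reducing sphere. The transitivity statements that collapse the quotient to one edge also need proof; your sketch only asserts them.
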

In Theorem \ref{thm:Cho}, the maps $\beta$, $\rho$, $\gamma$, $\alpha$, $\delta$, and $\sigma$ refer to specific homeomorphisms. Let $(F', V', W')$ be the genus one Heegaard splitting of $L(p, 1)$, with the $(p, 1)$-torus knot bounding a disk in $V'$, and let $(F, V, W)$ be the genus 2 stabilization of $(F', V', W')$. We consider the action $\mathcal G_p$ on $H_1(F; \mathbb Z)$ by considering how the elements affect a basis for homology. As most of the homeomorphisms in \cite{ChoG2GGLS} are described in terms of the handlebody $W$, we will choose a standard basis for the homology of $F$ from this perspective. Let the homology vectors $\textbf{a}, \textbf{x}, \textbf{b}$, and $\textbf{y}$ be the standard basis vectors, represented by the oriented curves $a$, $x$, $b$, and $y$ shown in Figure \ref{figure:alphabeta}, with $a$ and $x$ bounding disks in $W$, $y$ bounding a disk in $V$, and the cross products $a \times b$ and $x \times y$ (of the tangent vectors) pointing into $V$ at the points of intersection. The actions then produce a map $\star: \mathcal{G}_p \to GL(4, \mathbb Z)$. We denote by $\alpha_*, \beta_*, \gamma_*, \delta_*, \rho_*$, and $\sigma_*$ the automorphisms of $H_1(F; \mathbb{Z})$ induced by $\alpha, \beta, \gamma, \delta, \rho$, and $\sigma$, respectively. 

\begin{lem}
\label{lemma:matrices}
With respect to the ordered basis $\set{\bf{a}, \bf{x}, \bf{b}, \bf{y}}$, the matrices induced by the generators of $\mathcal{G}_p$ are as follows.
\begin{gather*}
\alpha_* = \begin{pmatrix} -1 & 0 & 0 & 0 \\ 0 & -1 & 0 & 0 \\ 0 & 0 & -1 & 0 \\ 0 & 0 & 0 & -1\end{pmatrix},\  \beta_*  = \begin{pmatrix} 1 & 0 & 0 & 0 \\ 0 & -1 & 0 & 0 \\ 0 & 0 & 1 & 0 \\ 0 & 0 & 0 & -1\end{pmatrix}, \ \gamma_*  = \begin{pmatrix} 1 & 1 & 0 & 0 \\ 0 & -1 & 0 & 0 \\ 0 & 0 & 1 & 0 \\ 0 & 0 & 1 & -1\end{pmatrix},   \\ 
\delta_*   =  \begin{pmatrix} -2 & -1 & 1 & -1 \\ 3 & 1 & -1 & 0 \\ 0 & 0 & 1 & -3 \\ 0 & 0 & 1 & -2\end{pmatrix}, \rho_*   =  \begin{pmatrix} 1 & 1 & 1 & -1 \\ -2 & -1 & -1 & 0 \\ 0 & 0 & -1 & 2 \\ 0 & 0 & -1 & 1\end{pmatrix},\mbox{ and } \sigma_* = \begin{pmatrix} 1 & 0 & 0 & 1 \\ -p & -1 & -1 & 0 \\ 0 & 0 & 1 & -p \\ 0 & 0 & 0 & -1 \end{pmatrix}.
\end{gather*}
\end{lem}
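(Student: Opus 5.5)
The plan is a direct computation. For each generator in Theorem \ref{thm:Cho} I will take Cho's explicit description of the homeomorphism from \cite{ChoG2GGLS} and track the images of the oriented curves $a$, $x$, $b$, $y$ of Figure \ref{figure:alphabeta}. I will then write each image as a homology class in the basis $\set{\bf a, \bf x, \bf b, \bf y}$; these coefficient vectors form the columns of the corresponding matrix. The coefficients are read off with algebraic intersection numbers. Since $a\cdot b = x\cdot y = 1$ (with the orientation convention fixed by the cross products pointing into $V$), and all other pairings vanish, a class $c$ satisfies
\[ c = (c\cdot b)\,{\bf a} + (c\cdot y)\,{\bf x} - (c\cdot a)\,{\bf b} - (c\cdot x)\,{\bf y}. \]
So each column only requires counting signed intersections of the image curve with the four basis curves.

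First I handle the simplest maps. The hyperelliptic involution $\alpha$ reverses every curve, which gives $-I$. The half-twist type map $\beta$ fixes $a$ and $b$ and reverses $x$ and $y$. Next I handle $\gamma$, which exchanges the roles of disks in $W$. I must check that $a\mapsto a$, $x\mapsto a-x$, $b\mapsto b+y$, and $y\mapsto -y$. These images follow from seeing $\gamma$ as a half-rotation sliding one handle over the other. After that come $\sigma$, $\rho$ and $\delta$, which involve the $(p,1)$ torus knot bounding a disk in $V'$. This is where $p$ enters: $\sigma$ should send $x\mapsto -x$ and $b\mapsto b-{\bf x}$, and send $a$ and $y$ to curves with $p$ in their coefficients, as the matrix shows. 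The maps $\rho$ and $\delta$ are the special $p=2,3$ analogues, and I will check them by specializing the same picture.

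As a consistency check on every matrix, I will verify two things. First, it preserves the intersection form, i.e.\ $M^T J M = J$ with $J$ the symplectic form in this basis. Second, it preserves the Lagrangian subspaces coming from the handlebodies, as appropriate: ${\bf a},{\bf x}$ are the meridians of $W$, and the kernel of $H_1(F)\to H_1(V)$ is spanned by ${\bf y}$ together with the class of the $(p,1)$ curve, roughly $p{\bf a}+{\bf b}$ in suitable form. Every Goeritz element maps the meridian lattice of $W$ to itself. This explains why each matrix has lower-left $2\times 2$ block zero. I will also verify the relations: $\rho_*^4=\gamma_*^2=(\gamma_*\rho_*)^2=I$ for $p=2$, $\delta_*^3=I$ for $p=3$, and $\gamma_*^2=\sigma_*^2=I$. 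These matrix identities are routine multiplications, and they would catch sign errors.

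The main obstacle is the geometric step of correctly transcribing Cho's homeomorphisms, particularly $\sigma$, $\rho$ and $\delta$, into our fixed basis and orientation convention. Cho describes them via primitive disks and their duals rather than via our curves, so signs and orientations are easy to get wrong. If the direct picture is ambiguous, I will determine the matrices from constraints. Each map is specified by which primitive disks and dual disks it permutes, which fixes the images of ${\bf a},{\bf x}$ and of the $V$-meridian classes up to sign. Symplecticity then determines the remaining entries up to the sign ambiguities, and I will resolve those with the relations in Theorem \ref{thm:Cho}.
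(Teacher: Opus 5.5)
Your method is the paper's. It also tracks the oriented curves $a,x,b,y$ under Cho's homeomorphisms and reads off coordinates from oriented intersection numbers; Equation~(\ref{eqn:vector}) is your formula. The images you predict for $\alpha,\beta,\gamma,\sigma$ agree with the stated matrices.

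The genuine gap is your fallback for fixing signs. It cannot resolve the one ambiguity that actually arises, which is exactly the point the paper has to argue. Cho's $\sigma$ is drawn from the side of $V$. When you transfer that picture to the basis of Figure~\ref{figure:alphabeta}, the intersection convention fixes $a$ relative to $b$ and $x$ relative to $y$, but not the pair $\{a,b\}$ relative to $\{x,y\}$. Reversing both $x$ and $y$ replaces $\sigma_*$ by
\[ \beta_*\sigma_*\beta_*=\begin{pmatrix}1&0&0&-1\\ p&-1&1&0\\ 0&0&1&p\\ 0&0&0&-1\end{pmatrix}. \]
This matrix is symplectic, preserves $\langle\mathbf a,\mathbf x\rangle$ and the $V$-lattice $\langle\mathbf y,\mathbf a-p\mathbf b\rangle$, and squares to $I$. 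For $p\ge 4$ the group is $\mathbb{Z}/2\oplus(\mathbb{Z}*\mathbb{Z}/2*\mathbb{Z}/2)$, so the substitution $\sigma\mapsto\beta\sigma\beta^{-1}$ respects all of Cho's relations. None of your consistency checks therefore distinguishes the two candidates. The paper resolves this geometrically: for any orientation of $\partial D$, $\operatorname{sign}\iota(a,\partial D)=\operatorname{sign}\iota(x,\partial D)$ at every intersection point, and this fixes how the $\sigma$ picture sits relative to Figure~\ref{figure:alphabeta}. You need a geometric input of that kind, not the group relations.

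Two further parts of the plan are wrong as written.

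\emph{$\rho$ and $\delta$.} These cannot be obtained by ``specializing the same picture'' as $\sigma$. $\sigma_*$ is an involution for every $p$, whereas $\rho_*^2=-I$ (so $\rho_*$ has order $4$) and $\delta_*$ has order $3$. They are symmetries that Cho finds only for $p=2,3$. $\rho$ is a $\pi/2$ rotation of the $4$-holed sphere obtained by cutting $W$ along a primitive pair whose common dual pair in $V$ is symmetrically placed. $\delta$ is a $2\pi/3$ rotation of the pants decomposition coming from a primitive triple. Each requires its own picture, with the basis curves redrawn as arcs in the cut-open surface.

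\emph{The $V$-lattice.} The lattice you propose for your check is wrong. Since $\partial D$ meets $a$ in $p$ points and $b$ once, the kernel of $H_1(F)\to H_1(V)$ is $\langle\mathbf y,\ \mathbf a-\varepsilon p\,\mathbf b\rangle$, not $\langle\mathbf y,\ p\mathbf a+\mathbf b\rangle$; here $\varepsilon=-1$ for $p=3$, matching the paper's convention. With your lattice the correct $\sigma_*$ would fail the check, since $\sigma_*(p\mathbf a+\mathbf b)=p\mathbf a+\mathbf b-(p^2+1)\mathbf x$. Moreover, $\langle\mathbf a,\mathbf x\rangle+\langle\mathbf y,p\mathbf a+\mathbf b\rangle$ is all of $H_1(F)$ rather than a sublattice of index $p$, which is incompatible with $H_1(L(p,1))=\mathbb{Z}/p$.
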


\begin{figure}
\begin{center}
\begin{overpic}[scale=.6]{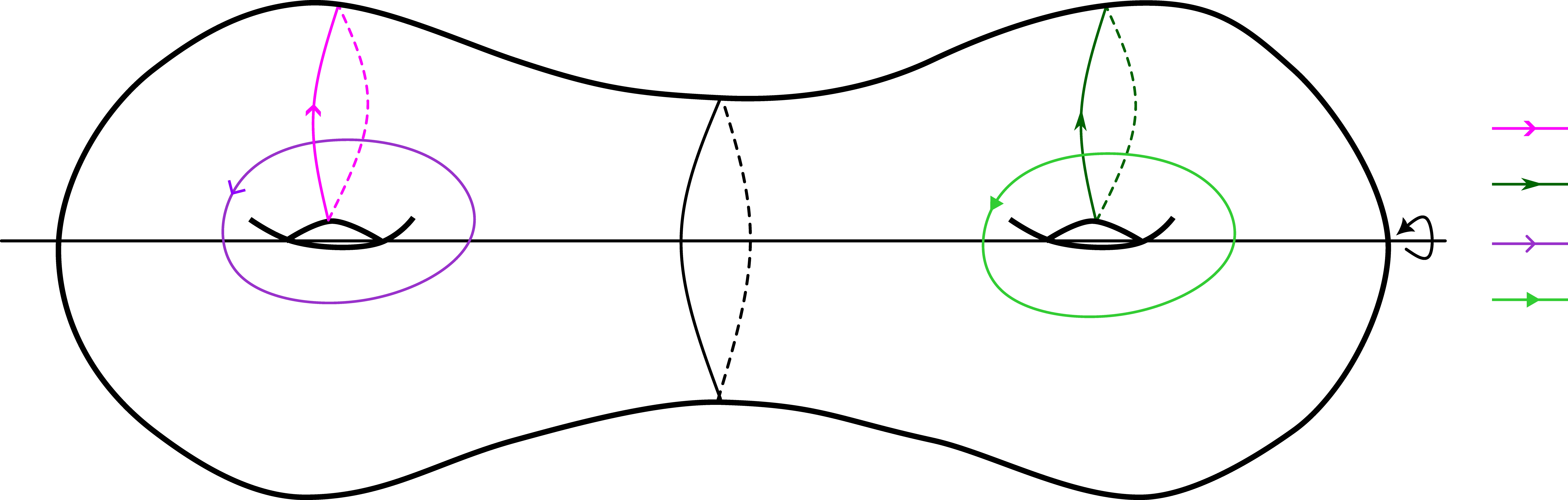}
\put (50,30) {$V$}
\put (50,17.5) {$W$}
\put (101.5,22.8) {$a$}
\put (101.5,19.3) {$x$}
\put (101.5,15.2) {$b$}
\put (101.5,11.7) {$y$}
\end{overpic}
\caption{Standard homology basis for $F$, and the axis of rotation for the homeomorphisms $\alpha$ and $\beta$.}
\label{figure:alphabeta}
\end{center}
\end{figure}
\begin{figure}
\begin{center}
\begin{overpic}[scale =.75]{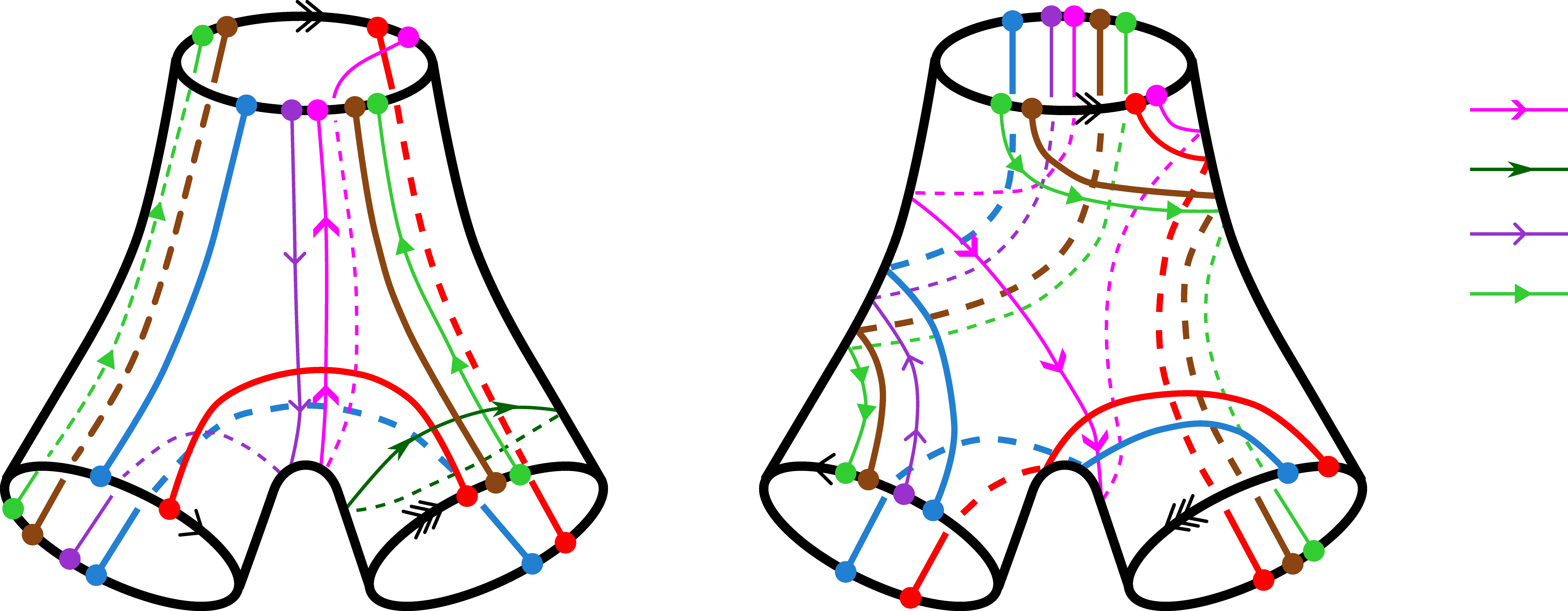}
\put (101,31.1) {$a$}
\put (101,27.3) {$x$}
\put (101,22.9) {$b$}
\put (101,19.3) {$y$}
\end{overpic}
\caption{Pants decomposition of $F$ used to describe $\delta$, together with the resulting curves and sub-arcs of the homology basis, and $\bd D$, $\bd E$, $\bd F$.}
\label{figure:deltabasepants}
\end{center}
\end{figure}
\begin{figure}
\begin{center}
\begin{overpic}[scale =.75]{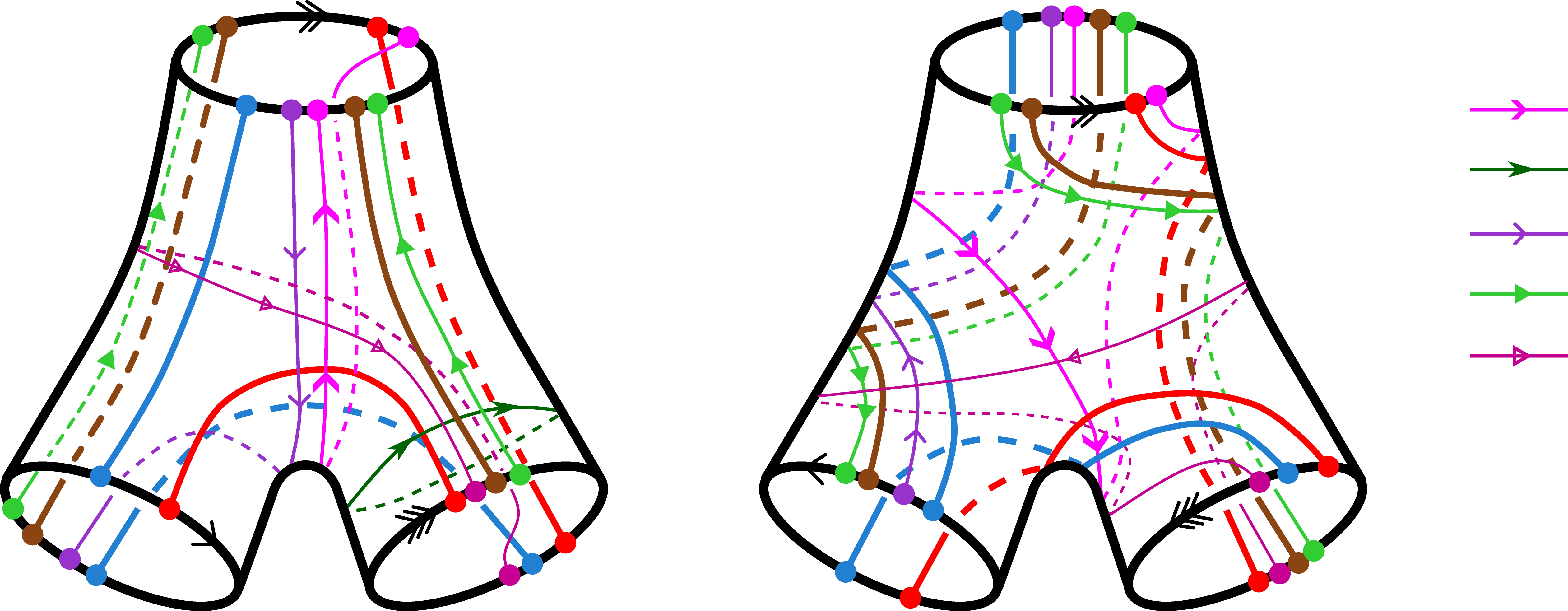}
\put (101,31.1) {$a$}
\put (101,27.3) {$x$}
\put (101,22.8) {$b$}
\put (101,19.4) {$y$}
\put (101, 15.1) {$\delta(a)$}
\end{overpic}
\caption{The image $\delta(a)$, superimposed onto Figure \ref{figure:deltabasepants}.}
\label{figure:deltaofa}
\end{center}
\end{figure}
\begin{figure}
\begin{center}
\begin{overpic}[scale =.75]{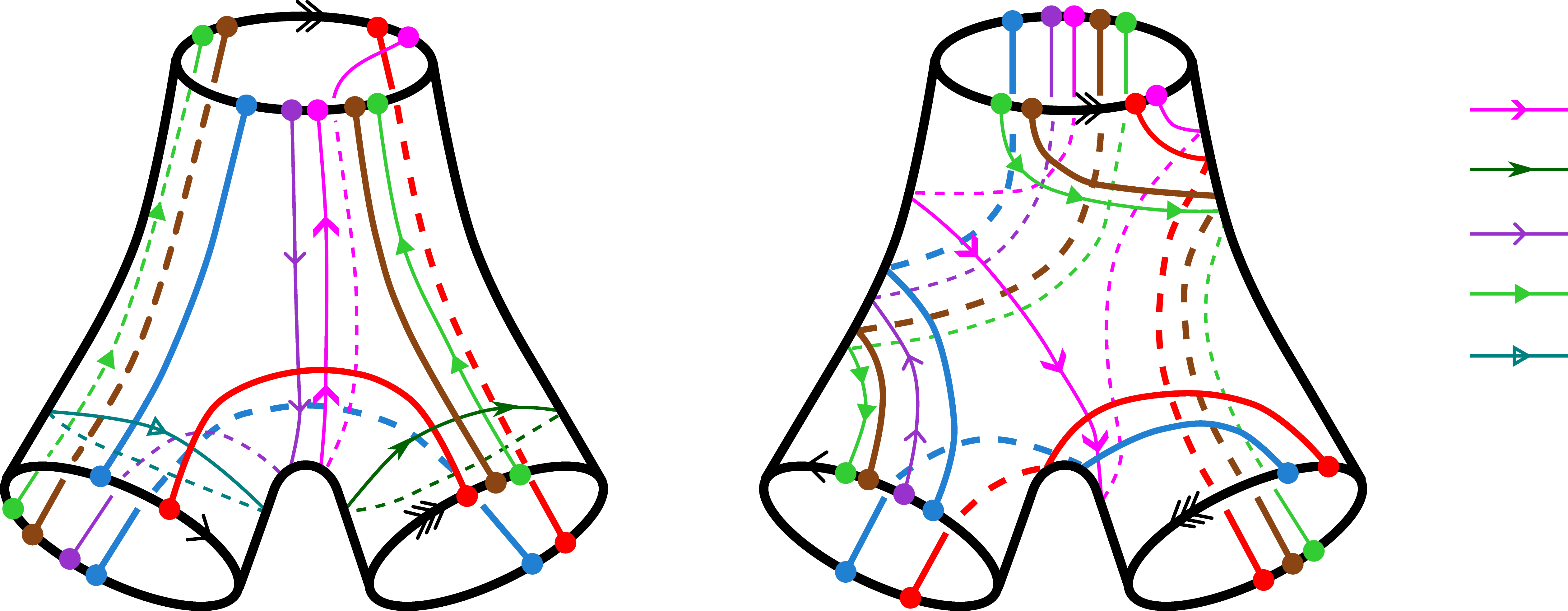}
\put (101,31.1) {$a$}
\put (101,27.3) {$x$}
\put (101,22.8) {$b$}
\put (101,19.3) {$y$}
\put (101, 15.1) {$\delta(x)$}
\end{overpic}
\caption{The image $\delta(x)$, superimposed onto Figure \ref{figure:deltabasepants}.}
\label{figure:deltaofx}
\end{center}
\end{figure}
\begin{figure}
\begin{center}
\begin{overpic}[scale =.75]{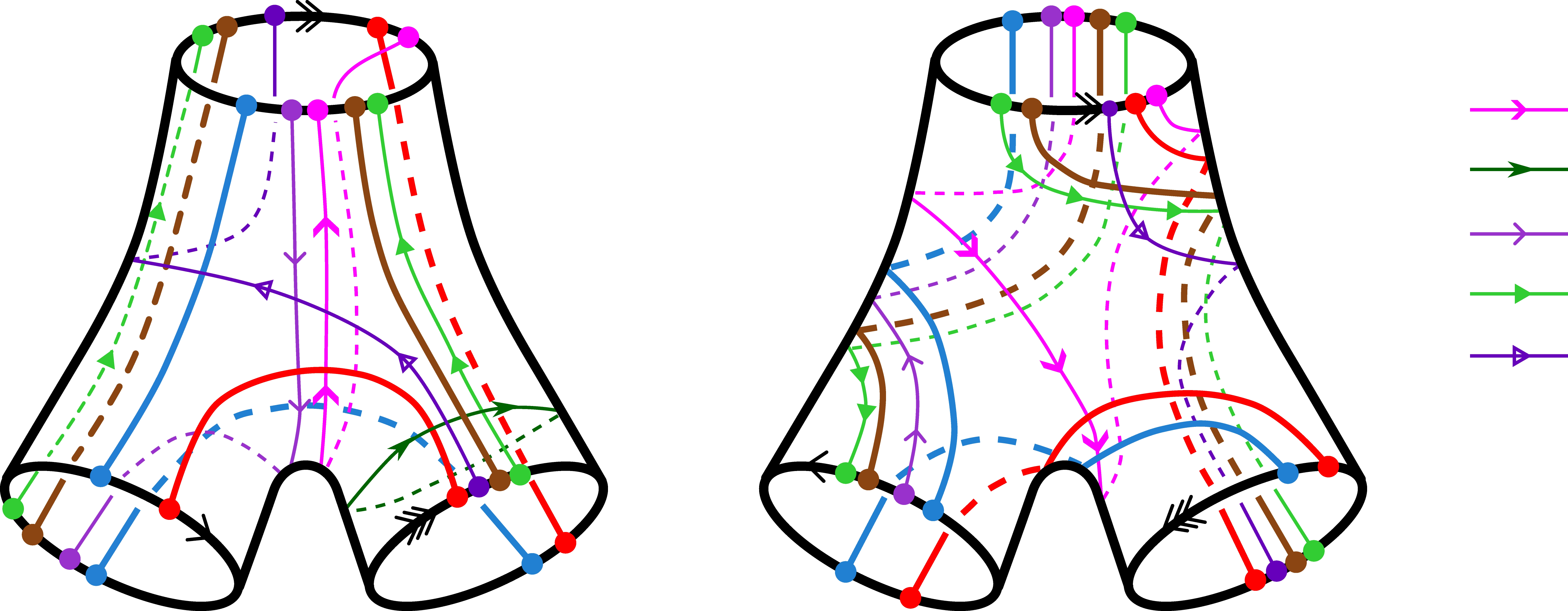}
\put (101,31.1) {$a$}
\put (101,27.4) {$x$}
\put (101,22.8) {$b$}
\put (101,19.3) {$y$}
\put (101, 15.1) {$\delta(b)$}
\end{overpic}
\caption{The image $\delta(b)$, superimposed onto Figure \ref{figure:deltabasepants}.}
\label{figure:deltaofb}
\end{center}
\end{figure}
\begin{figure}
\begin{center}
\begin{overpic}[scale =.75]{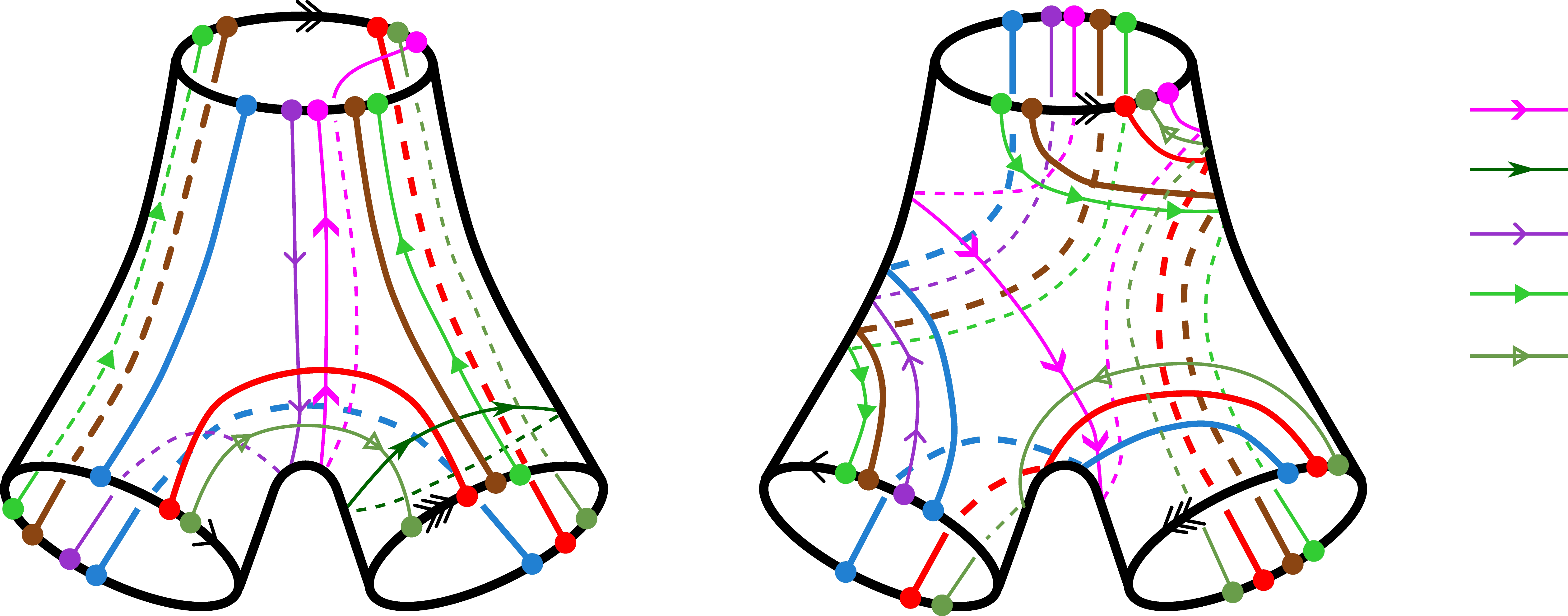}
\put (101,31.1) {$a$}
\put (101,27.4) {$x$}
\put (101,22.8) {$b$}
\put (101,19.5) {$y$}
\put (101, 15.1) {$\delta(y)$}
\end{overpic}
\caption{The image $\delta(y)$, superimposed onto Figure \ref{figure:deltabasepants}.}
\label{figure:deltaofy}
\end{center}
\end{figure}
\begin{figure}
\begin{center}
\begin{overpic}[scale =.7]{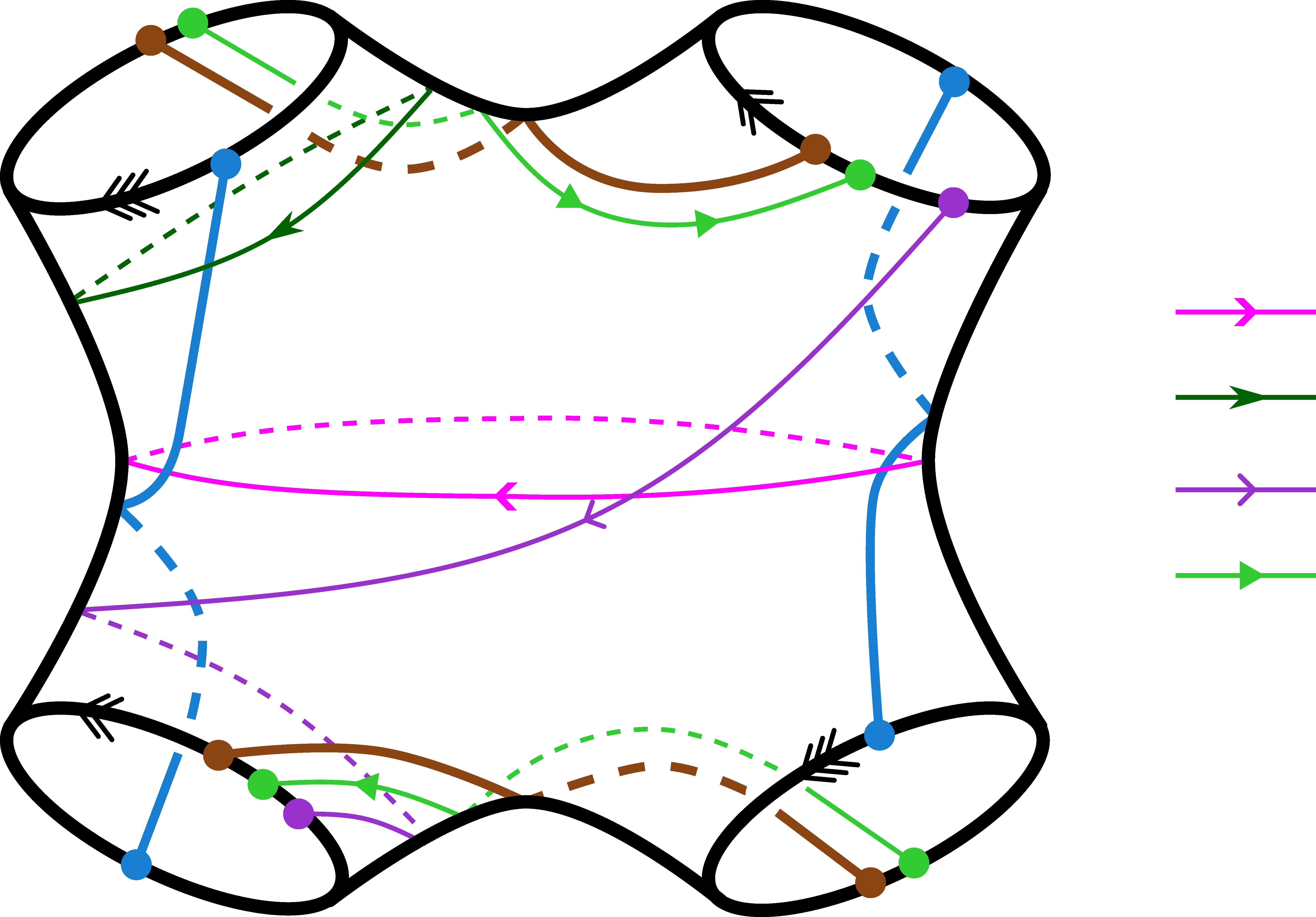}
\put (102,44.3) {$a$}
\put (102,37.6) {$x$}
\put (102,30.3) {$b$}
\put (102,24.3) {$y$}
\end{overpic}
\caption{Decomposition of $F$ used to describe $\rho$, together with the resulting curves and sub-arcs of the homology basis, and $\bd D$, $\bd E$, $\bd F$.}
\label{figure:rhobasepillowcase}
\end{center}
\end{figure}
\begin{figure}
\begin{center}
\begin{minipage}{0.5\textwidth}
\begin{overpic}[scale =.7]{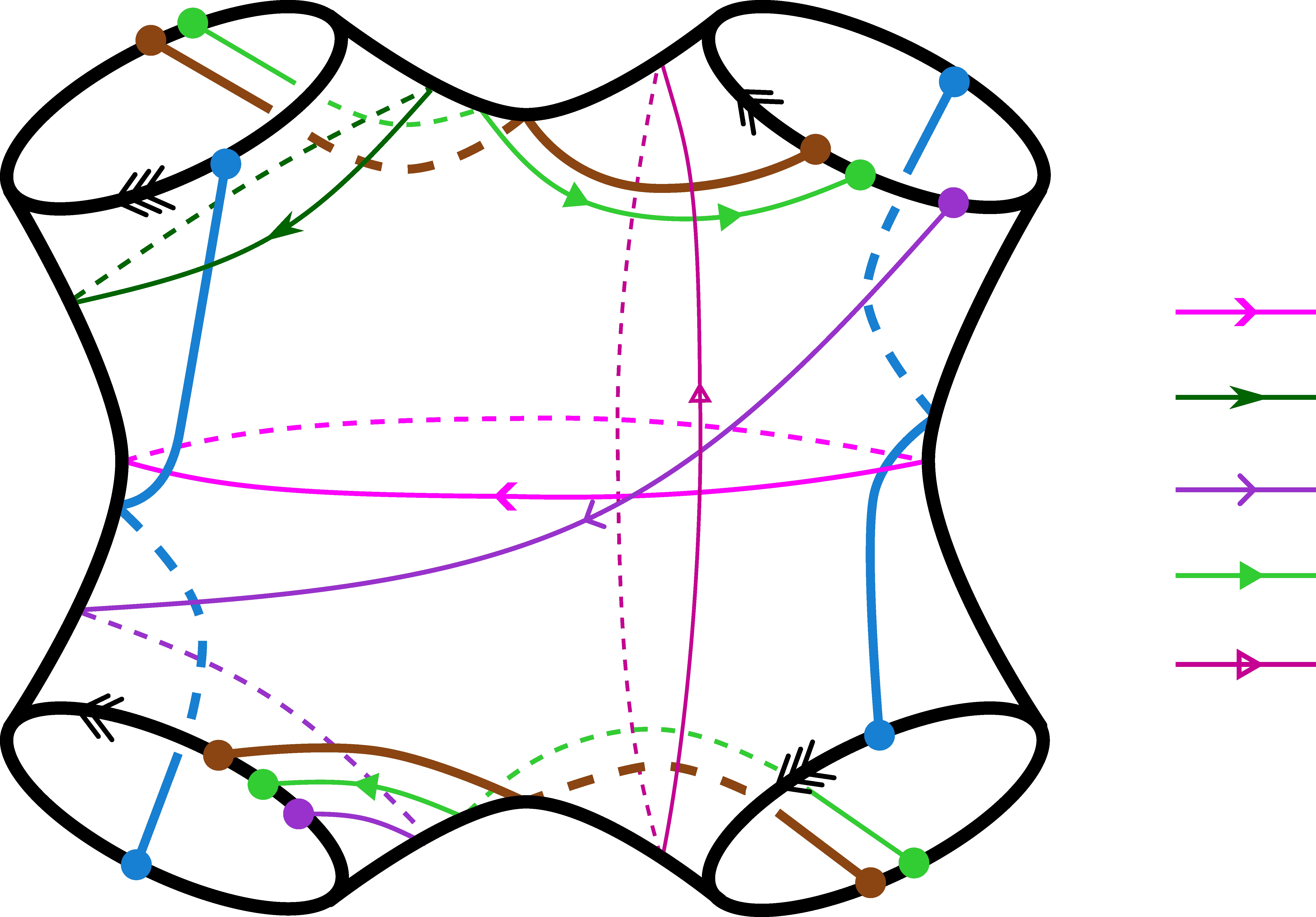}
\put (102,44.3) {$a$}
\put (102,37.6) {$x$}
\put (102,30.3) {$b$}
\put (102,24.3) {$y$}
\put (102,17.8) {$\rho(a)$}
\end{overpic}
\caption{The image $\rho(a)$ superimposed onto Figure \ref{figure:rhobasepillowcase}.}
\label{figure:rhoofa}
\end{minipage} \begin{minipage}{0.45\textwidth}
\begin{overpic}[scale =.7]{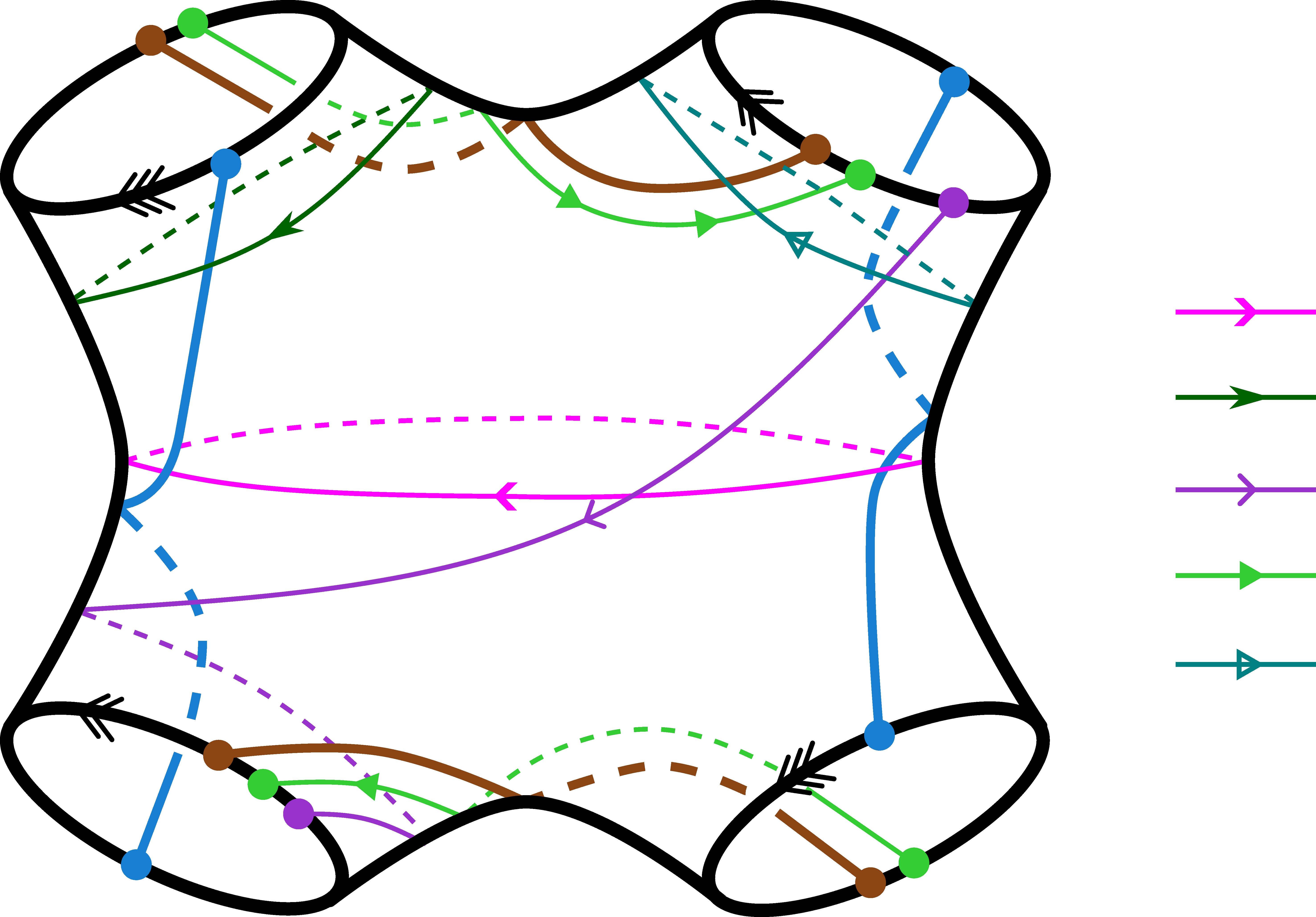}
\put (102,44.3) {$a$}
\put (102,37.6) {$x$}
\put (102,30.4) {$b$}
\put (102,24.3) {$y$}
\put (102,17.8) {$\rho(x)$}
\end{overpic}
\caption{The image $\rho(x)$ superimposed onto Figure \ref{figure:rhobasepillowcase}.}
\label{figure:rhoofx} \end{minipage}\vspace{.3in}
\begin{minipage}{0.5\textwidth}
\begin{overpic}[scale = .7]{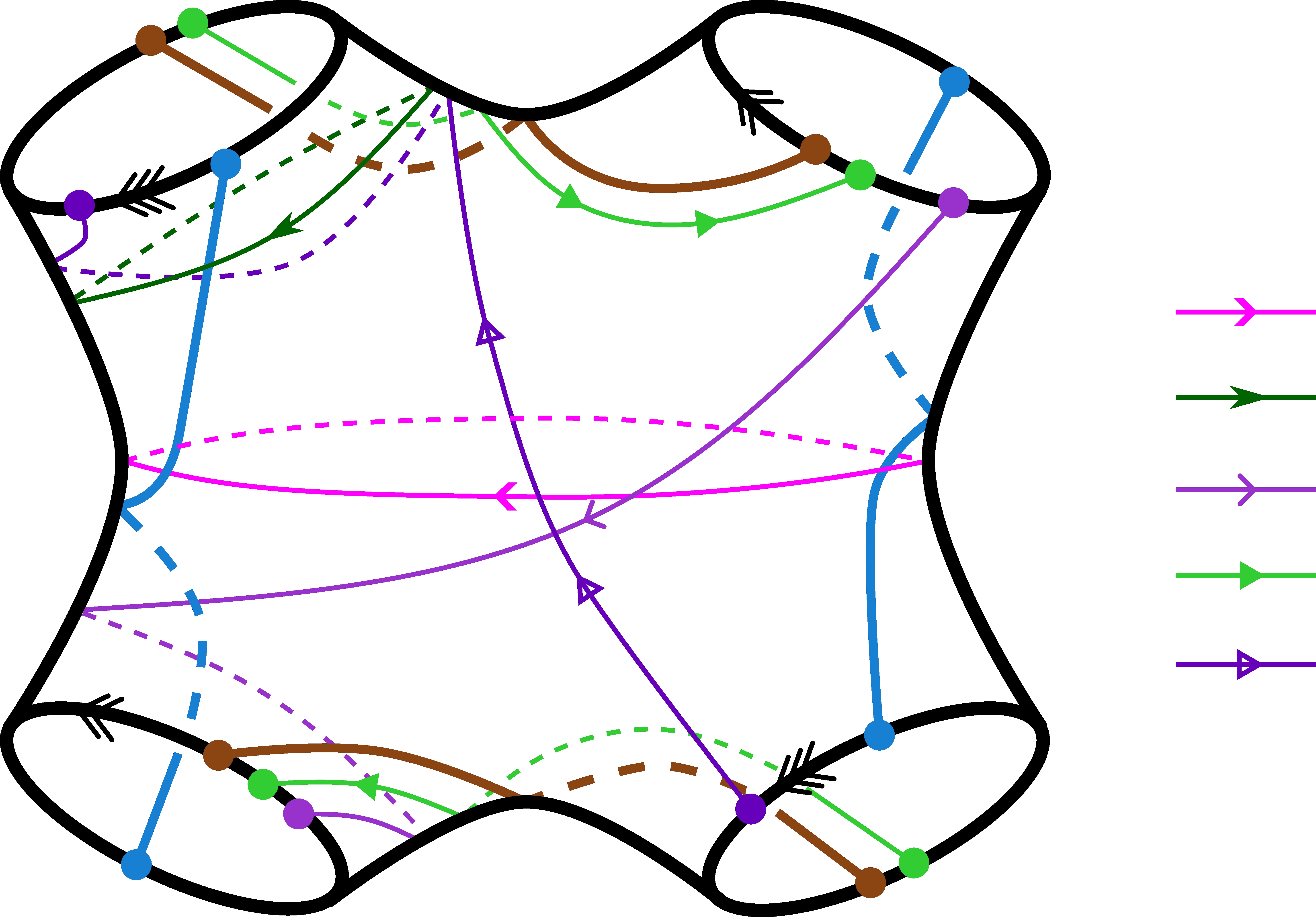}
\put (102,44.3) {$a$}
\put (102,37.6) {$x$}
\put (102,30.4) {$b$}
\put (102,24.3) {$y$}
\put (102,17.8) {$\rho(b)$}
\end{overpic}
\caption{The image $\rho(b)$ superimposed onto Figure \ref{figure:rhobasepillowcase}.}
\label{figure:rhoofb}
\end{minipage} \begin{minipage}{0.45\textwidth}
\begin{overpic}[scale =.7]{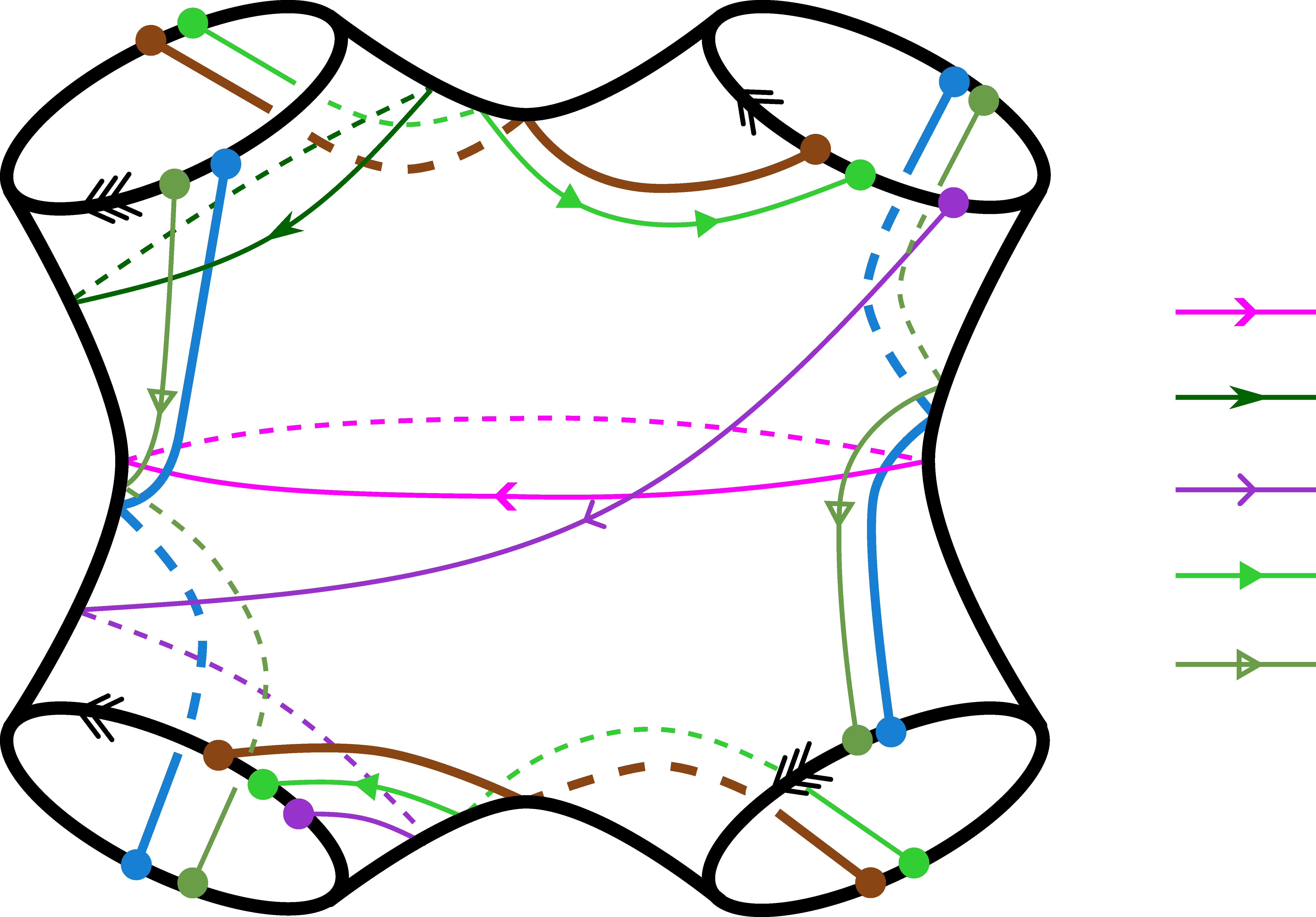}
\put (102,44.3) {$a$}
\put (102,37.6) {$x$}
\put (102,30.4) {$b$}
\put (102,24.3) {$y$}
\put (102,17.8) {$\rho(y)$}\end{overpic}
\caption{The image $\rho(y)$ superimposed onto Figure \ref{figure:rhobasepillowcase}.}
\label{figure:rhoofy} \end{minipage}
\end{center}
\end{figure}
\begin{figure}
\begin{center}
\begin{minipage}{.5\textwidth}
\begin{overpic}[scale =.5]{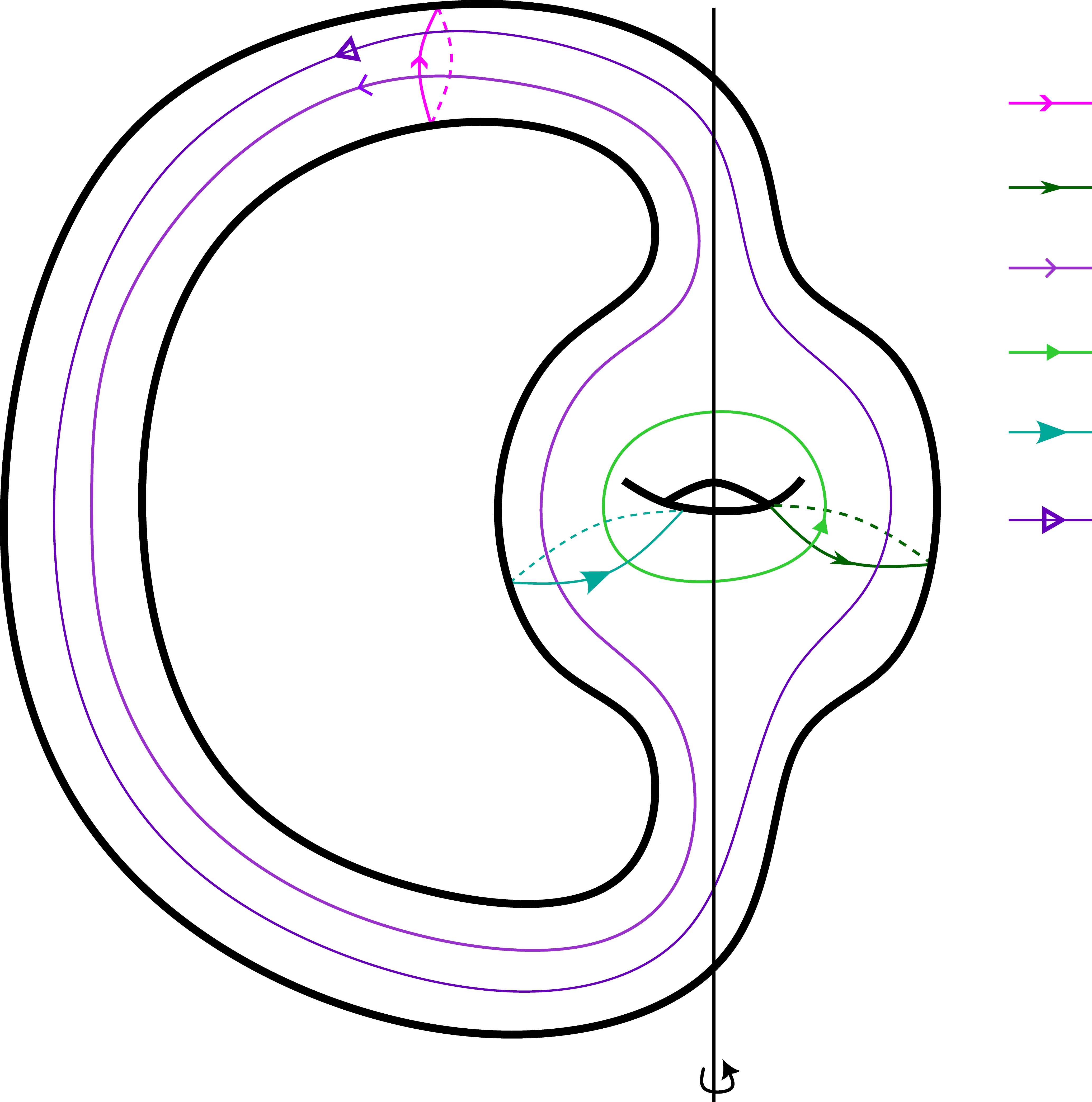}
\put (102,89.4) {$a$}
\put (102,81.5) {$x$}
\put (102,73.7) {$b$}
\put (102,66.5) {$y$}
\put (101.,59.) {$\gamma(x)$}
\put (101.,50.7) {$\gamma(b)$}
\put (5, 65.6) {$W$}
\put (25, 65.6) {$V$}
\end{overpic}
\caption{Axis of rotation for $\gamma$, and its action on the homology basis.}
\label{figure:gamma} \end{minipage}
\begin{minipage}{.45\textwidth}
\begin{overpic}[scale =.5]{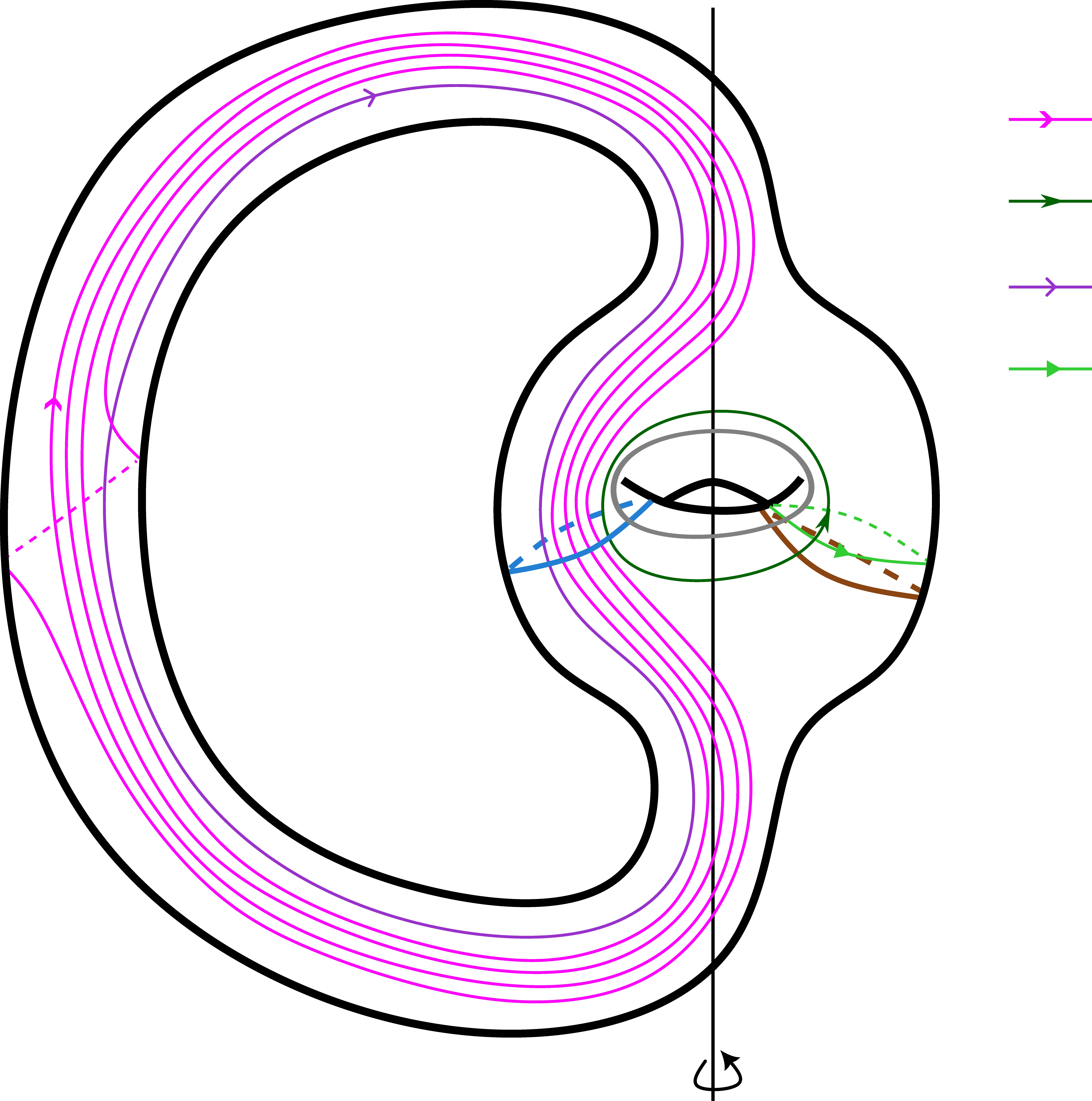}
\put (102,88.) {$a$}
\put (102,80) {$x$}
\put (102,72.) {$b$}
\put (102,64.9) {$y$}
\put (5, 65.6) {$V$}
\put (25, 65.6) {$W$}
\end{overpic}
\caption{The axis of rotation for $\sigma$, the homology basis, and $\bd D$, $\bd E$, $\bd E'$ (gray).}
\label{figure:sigma}\end{minipage}
\end{center}
\end{figure}
\begin{figure}
\begin{center}
\begin{overpic}[scale=.55]{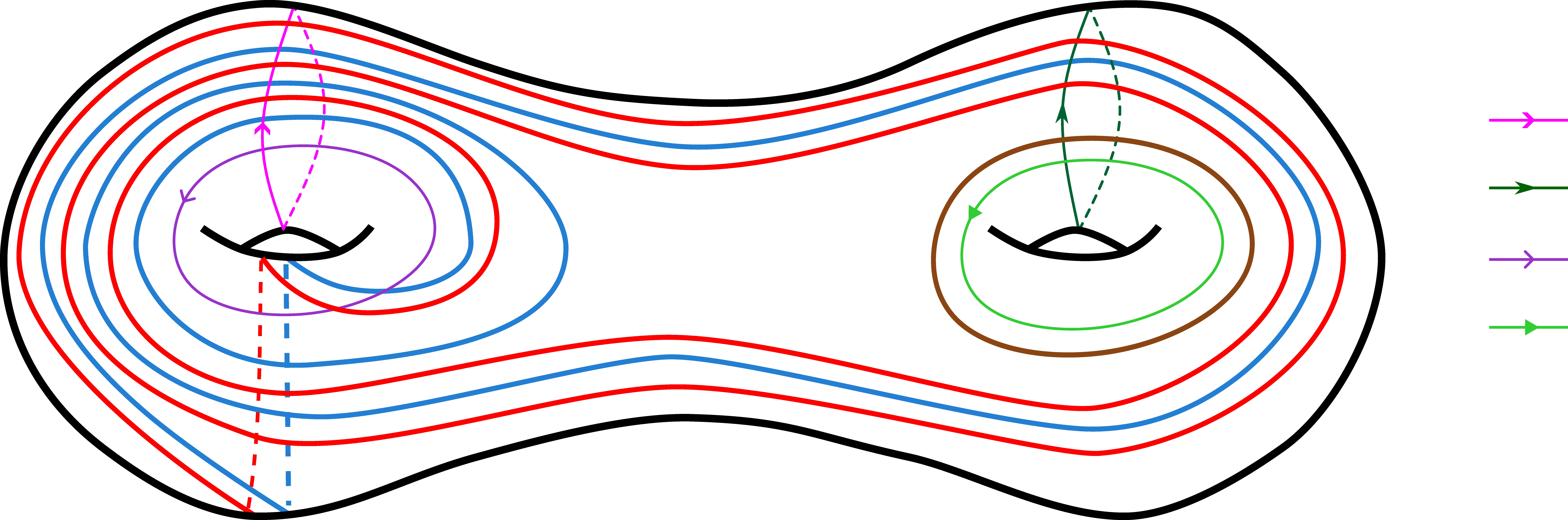}
\put (50,30) {$V$}
\put (48,14.75) {$W$}
\put (43.75,18.75) {$F$}
\put (55.75,17.75) {$E$}
\put (36.5,15.75) {$D$}
\put (102,24.5) {$a$}
\put (102,20.2) {$x$}
\put (102,15.3) {$b$}
\put (102,11.3) {$y$}
\end{overpic}
\caption{The generators $a, x, b,$ and $y$ and primitive disks $D, E,$ and $F$.}
\label{figure:disksandgens}
\end{center}
\end{figure}
\begin{figure}
\begin{center}
\begin{overpic}[scale =.55]{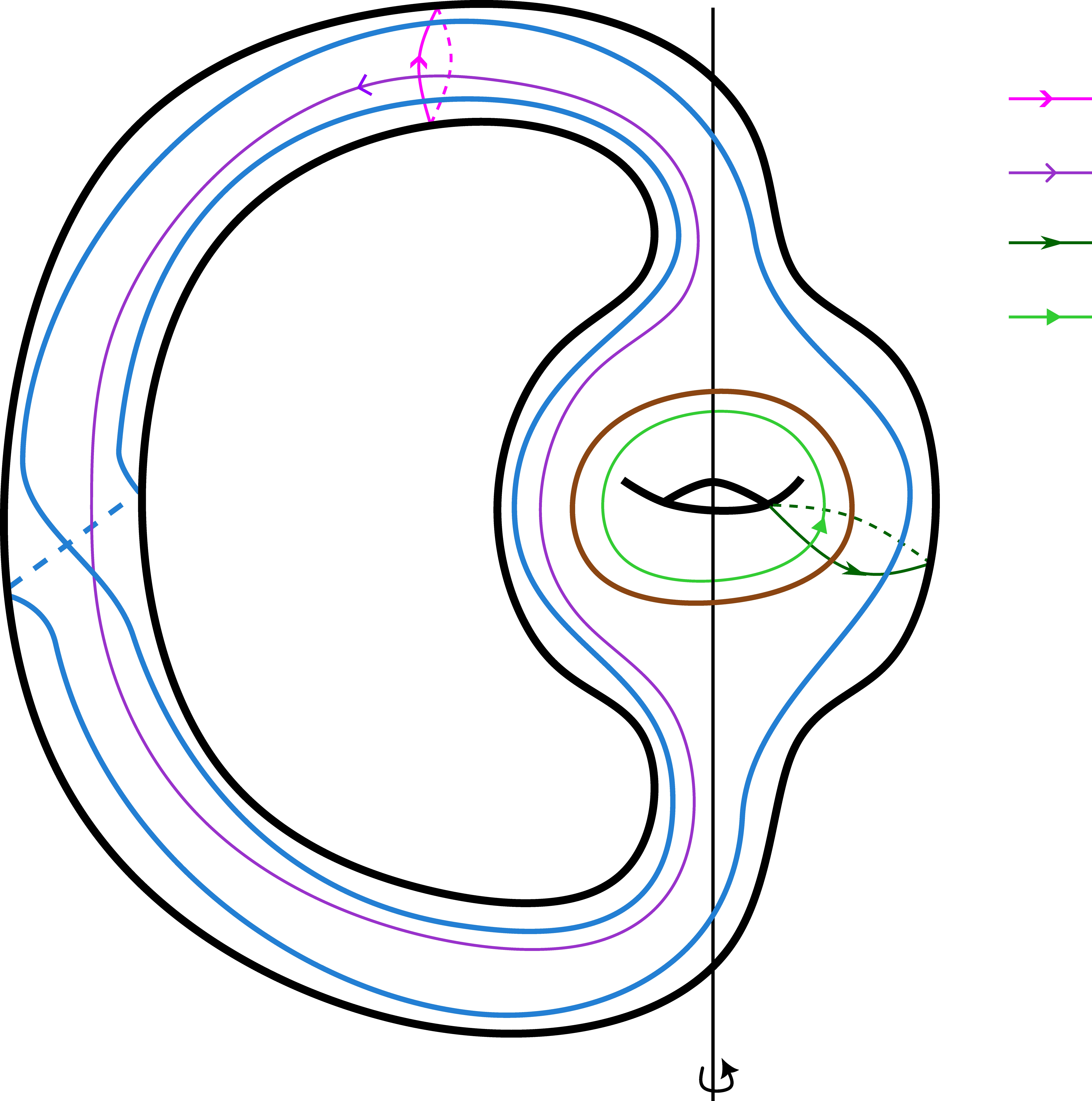}
\put (64.75,39.5) {$E$}
\put (43.5,8.25) {$D$}
\put (102,89.5) {$a$}
\put (102,82.8) {$x$}
\put (102,76.) {$b$}
\put (102,69.5) {$y$}
\put (5, 65.6) {$W$}
\put (25, 65.6) {$V$}
\end{overpic}
\caption{The generators $a, x, b,$ and $y$ and boundaries of primitive disks $D, E,$ and $F$ when $p = 2$.}
\label{figure:chofig8withgens}
\end{center}
\end{figure}

\begin{proof}
The proof will consist of tracking the images of the basis curves under each of the homeomorphisms in the generating set for $\mathcal{G}_p$. Let $\io(\cdot, \cdot)$ denote the oriented intersection number between two curves, with the positive surface orientation pointing into $V$. Observe, then, that for a curve $z$ representing vector $\textbf{z}$, and an element $g \in \mathcal{G}_p$, 
\begin{eqnarray} \label{eqn:vector} g_*(\textbf{z}) = (\io(g(z), b), \, \io(g(z), y), \, \io(a, g(z)), \, \io(x, g(z)))^T. \end{eqnarray} 

First, the maps $\alpha$ and $\beta$ are relatively straightforward. The map $\alpha$ is a rotation by $\pi$ about the horizontal axis of symmetry of $F$, as shown in Figure \ref{figure:alphabeta}. It is the hyperelliptic involution; it simply reverses the orientation of each curve. The map $\beta$ is a rotation of half of $F$ by $\pi$ about the same axis, with the right handle of the surface rotating clockwise, as viewed from the right in the figure. This map reverses the orientations of $x$ and $y$, leaving $a$ and $b$ fixed. These produce the matrices shown for $\alpha_*$ and $\beta_*$.

Next, the map $\gamma$ is shown in Figure \ref{figure:gamma}, rotating the right handle by $\pi$ about the vertical axis indicated. The curve $a$ is fixed, $y$ is reversed, and the images of $b$ and $x$ are shown. Equation (\ref{eqn:vector}) then provides the matrix shown for $\gamma_*$.

When $p=2$, there is a 4-fold rotation, $\rho$, on the Heegaard surface. There exists a primitive pair of disks $D'$ and $E'$ in $W$, and a unique primitive pair of disks $D$ and $E$ in $V$ that are commonly dual to $D'$ and $E'$. Cutting $W$ along $D'$ and $E'$ results in a 3-ball, with four distinguished disks on the boundary sphere, which we can think of as a 4-holed sphere. The curves $\bd D$ and $\bd E$ on this sphere are symmetrically arranged (see Figure (8) from \cite{ChoG2GGLS}) to give rise to a 4-fold rotation. This 4-holed sphere, together with the curves $a$, $x$, $b$, and $y$, is shown in Figure \ref{figure:rhobasepillowcase}. The map $\rho$ rotates the 4-holed sphere clockwise by $\pi/2$, exchanging $D$ and $E$, and exchanging $D'$ and $E'$. We can then analyze the intersections of the images $\rho(a)$, $\rho(x)$, $\rho(b)$, and $\rho(y)$, respectively shown in Figures \ref{figure:rhoofa}, \ref{figure:rhoofx}, \ref{figure:rhoofb}, and \ref{figure:rhoofy}, with all four of the original curves $a$, $x$, $b$, and $y$. The columns of $\rho_*$ are then produced from Equation \ref{eqn:vector}.

When $p = 3$, there is a map $\delta$ that acts as a 3-fold rotation, best visualized using a pants decomposition of $F$ provided by a primitive triple of disks, $D'$, $E'$, and $F'$ in $W$ (see Figure (10) from \cite{ChoG2GGLS}) . With the curves $a$, $x$, $b$, and $y$, paying keen attention to orientations and where intersections between curves occur relative to each other, the result of cutting along the disks $D'$, $E'$, and $F'$ is shown in Figure \ref{figure:deltabasepants}. Then $\delta$ rotates each of the pairs of pants clockwise by $2\pi/3$, cyclically permuting the disks $D'$, $F'$, and $E'$ (shown in the bottom left, top, and bottom right, resp.), as well as the disks $D$, $E$, and $F$ (shown in blue, brown, and red, resp.) in a primitive triple determined by $D'$, $E'$, and $F'$. We can then analyze the intersections of the images $\delta(a)$, $\delta(x)$, $\delta(b)$, and $\delta(y)$, respectively shown in Figures \ref{figure:deltaofa}, \ref{figure:deltaofx}, \ref{figure:deltaofb}, and \ref{figure:deltaofy}, with all four of the original curves, $a$, $x$, $b$, and $y$. The columns of $\delta_*$ are then, again, produced from Equation \ref{eqn:vector}.

Finally, when $p \geq 4$, $\sigma$ acts similarly to $\gamma$, but from the perspective of $V$ instead of $W$. We verify that Figure \ref{figure:sigma} is an accurate representation of the basis curves. The curves $x$ and $y$ are parallel to $\bd E'$ and $\bd E$, respectively, as $x$ bounds a disk in the stabilized portion of $W$ and $y$ bounds a disk in the stabilized portion of $V$. The curves $a$ and $b$ are each disjoint from $E \cup E'$, $a$ bounds a disk in $W$ that intersects $\bd D$ $p$ times, and $b$ intersects each of $\bd D$, $a$, and the boundary of the band sum of $D$ and $E$ along $\bd E'$ exactly once. However, while the relative orientations of $a$ and $b$ and of $x$ and $y$ are clear, the orientations of these pairs relative to each other is potentially ambiguous. To further settle this discrepancy, observe that if any orientation were to be placed on $\bd D$, then $sign ( \io(a, \bd D) ) = sign( \io(x, \bd D) )$ at every point of intersection (see Figure \ref{figure:disksandgens}), so the orientations in Figure \ref{figure:sigma} are consistent. Finally, then, the columns of $\sigma_*$ are produced from Equation \ref{eqn:vector}.
\end{proof}

\section{The star map}
\label{section:Star}

We are now in a position to investigate structures of the subgroups of $GL(4, \mathbb{Z})$ formed by the images of the star map and the kernel of this map.

Let \[\mathcal{A}_p = \set{ A = \begin{pmatrix} kp+1 & \ell \\ mp & np + \Delta \end{pmatrix} \, :  \,  k, \ell, m, n \in \mathbb{Z}, \, \det A = \Delta \in \set{\pm 1}}.\] For $A = \begin{pmatrix} kp+1 & \ell \\ mp & np + \Delta \end{pmatrix}  \in \mathcal{A}_p$, let $D_A = \begin{pmatrix} k - \Delta n & \Delta m \\ m & 0 \end{pmatrix}$, and let \[\mathcal{S}_p = \set{ \pm \begin{pmatrix} A & \varepsilon D_A \\ 0 & (A^{T})^{-1} \end{pmatrix} \, : \, A \in \mathcal{A}_p},\] where $\varepsilon = -1$ if $p = 3$, and $\varepsilon = 1$ otherwise. Further, let \[\mathcal{B}_p = \set{ A = \begin{pmatrix} kp+1 & \ell \\ mp & np + \Delta \end{pmatrix} \, :  \, A \in \mathcal{A}_p, \, |mp | \leq 2|kp + 1| \mbox{ or } p| \ell |  \leq  2 | np + \Delta |}.\]

\begin{thm} \label{thm:subgroups} For all $p \geq 2$, $*(\mathcal{G}_p)$ is a subgroup of $\mathcal{S}_p$. 
\end{thm}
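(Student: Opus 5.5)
Since $\star$ is a homomorphism, $\star(\mathcal G_p)$ is the subgroup of $GL(4,\mathbb Z)$ generated by the images of Cho's generators from Theorem \ref{thm:Cho}. So it suffices to prove two things. First, $\mathcal S_p$ is itself a subgroup of $GL(4,\mathbb Z)$. Second, each matrix in Lemma \ref{lemma:matrices} that occurs for the given $p$ lies in $\mathcal S_p$.

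\emph{Generators.} This is a direct check, reading off $k,\ell,m,n,\Delta$ from the upper-left block.
\begin{itemize}
\item $\alpha_*=-I$: take $A=I$, so $D_A=0$, with the minus sign.
\item $\beta_*$: $A=\mathrm{diag}(1,-1)$ with $\Delta=-1$ and $k=m=n=0$, so $D_A=0$ and $(A^T)^{-1}=A$.
\item $\gamma_*$: $A=\begin{pmatrix}1&1\\0&-1\end{pmatrix}$, so $D_A=0$, and $(A^T)^{-1}=\begin{pmatrix}1&0\\1&-1\end{pmatrix}$ as required.
\item $\sigma_*$: $A=\begin{pmatrix}1&0\\-p&-1\end{pmatrix}$ with $m=-1$, $\Delta=-1$, $k=n=0$. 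This gives $D_A=\begin{pmatrix}0&1\\-1&0\end{pmatrix}$, which matches with $\varepsilon=1$.
\item $\delta_*$ ($p=3$): $A=\begin{pmatrix}-2&-1\\3&1\end{pmatrix}$, $\Delta=1$, $k=-1$, $m=1$, $n=0$. This gives $D_A=\begin{pmatrix}-1&1\\1&0\end{pmatrix}$, and $\varepsilon D_A$ with $\varepsilon=-1$ is the displayed block.
\item $\rho_*$ ($p=2$): $A=\begin{pmatrix}1&1\\-2&-1\end{pmatrix}$, $\Delta=1$, $k=0$, $m=-1$, $n=-1$. This gives $D_A=\begin{pmatrix}1&-1\\-1&0\end{pmatrix}$, as displayed.
\end{itemize}
In each case the lower-right block equals $(A^T)^{-1}$.

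\emph{Group structure.} The set $\mathcal A_p$ consists of the integer matrices of determinant $\pm1$ congruent mod $p$ to $\begin{pmatrix}1&\ast\\0&\Delta\end{pmatrix}$. Such matrices are closed under products and inverses, and determinants multiply. Note that for $p\ge2$ the entries $k,m,n,\Delta$ are uniquely determined by $A$, so $D_A$ is a well-defined function of $A$. The sign $\pm$ is harmless.

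Multiplying block upper-triangular matrices gives
\[
\begin{pmatrix}A&\varepsilon D_A\\0&A^{-T}\end{pmatrix}\begin{pmatrix}B&\varepsilon D_B\\0&B^{-T}\end{pmatrix}=\begin{pmatrix}AB&\varepsilon(AD_B+D_AB^{-T})\\0&(AB)^{-T}\end{pmatrix}.
\]
So closure under multiplication reduces to the cocycle identity
\[
D_{AB}=AD_B+D_AB^{-T}\qquad (A,B\in\mathcal A_p).
\]
Given this identity, inverses follow. Setting $B=A^{-1}$ and using $D_I=0$ gives $D_{A^{-1}}=-A^{-1}D_AA^{T}$. This is exactly the upper-right block of the inverse matrix $\begin{pmatrix}A^{-1}&-A^{-1}\varepsilon D_AA^T\\0&A^T\end{pmatrix}$.

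\emph{Main obstacle.} The cocycle identity is where I expect the real work. I would prove it by direct computation. Write $A$ with parameters $(k,\ell,m,n,\Delta)$ and $B$ with $(k',\ell',m',n',\Delta')$. Then compute the entries of $AB$ and identify its parameters $(k'',\ell'',m'',n'',\Delta\Delta')$. For example, $k''p+1=(kp+1)(k'p+1)+\ell m'p$ gives $k''=kk'p+k+k'+\ell m'$, and similarly for $m''$ and $n''$. Use $B^{-T}=\Delta'\begin{pmatrix}n'p+\Delta'&-m'p\\-\ell'&k'p+1\end{pmatrix}$, and reduce with the determinant relations $(kp+1)(np+\Delta)-\ell mp=\Delta$ and its primed analogue. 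Then compare the four entries of both sides.

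If the brute-force comparison becomes unwieldy, I would fall back on a conceptual route. Note that $D_A$ encodes the deviation of the block matrix from preserving the intersection form on $H_1(F)$. Every element of $\star(\mathcal G_p)$ preserves that form up to sign and preserves the subspace spanned by $\mathbf a,\mathbf x$ (the disks in $W$). I would then characterize the upper-right block as the unique integer matrix compatible with both constraints together with the mod-$p$ structure of $A$. That would make the identity automatic.
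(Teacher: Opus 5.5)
Your proposal is correct and takes the same route as the paper. You check that each generator matrix from Lemma~\ref{lemma:matrices} lies in $\mathcal{S}_p$, and you reduce closure to the identity $D_{AB}=AD_B+D_A(B^T)^{-1}$, which the paper likewise only asserts is routine; you also handle inverses explicitly, which the paper leaves implicit. For a quick verification of that identity, note that $pD_A = AE_{11}-E_{11}(A^T)^{-1}$ with $E_{11}=\begin{pmatrix}1&0\\0&0\end{pmatrix}$, so the right-hand side telescopes to $\frac{1}{p}\bigl(ABE_{11}-E_{11}((AB)^T)^{-1}\bigr)=D_{AB}$.
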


\begin{proof}
First, suppose $M_i = \pm \begin{pmatrix} C_i & \varepsilon D_{C_i} \\ 0 & (C_i^T)^{-1} \end{pmatrix} \in \mathcal{S}_p$, $(i = 1, 2)$, where $\varepsilon = -1$ when $p=3$, and $\varepsilon=1$ otherwise. Then, for $p \neq 3$, $M_1 M_2 = \pm \begin{pmatrix} C_1C_2 & C_1 D_{C_2} + D_{C_1} \left(C_2^T\right)^{-1} \\ 0 & \left( \left( C_1 C_2 \right)^T \right)^{-1} \end{pmatrix}$. If $C_i = \begin{pmatrix} k_i p + 1 & \ell_i \\ m_i p & n_i p + \Delta_i \end{pmatrix}$, $(i = 1, 2)$, then 
\begin{eqnarray*} C_1 C_2 & = & \begin{pmatrix} (k_1 p + 1)(k_2 p + 1) + \ell_1 m_2 p & \ell_2 (k_1p + 1) + \ell_1 (n_2 p + \Delta_2) \\ m_1 p (k_2 p + 1) + m_2 p (n_1 p + \Delta_1) & m_1 p \ell_2 + (n_1p + \Delta_1)(n_2 p + \Delta_2) \end{pmatrix} \\
& = & \begin{pmatrix} k_3p + 1 &  \ell_3 \\ m_3 p & n_3 p + \Delta_3 \end{pmatrix}, \end{eqnarray*}
with $k_3 = (k_1 k_2 p + k_1 + k_2 + \ell_1 m_2)$, $\ell_3 = \ell_2 (k_1p + 1) + \ell_1 (n_2 p + \Delta_2)$, $m_3 = (m_1(k_2 p + 1) + m_2 ( n_1 p + \Delta_1))$, $n_3 = (m_1 \ell_2 + n_1 n_2 p + n_1 \Delta_2 + n_2 \Delta_1)$, and $\Delta_3 = \Delta_1 \Delta_2 = \det(C_1 C_2)$. It is routine, albeit tedious, to verify the equation $C_1 D_{C_2} + D_{C_1} \left(C_2^T\right)^{-1} = D_{C_1 C_2}$. When $p=3$, we have, instead, $M_i = \pm \begin{pmatrix} C_i & -D_{C_i} \\ 0 & (C_i^T)^{-1} \end{pmatrix} \in \mathcal{S}_3$, and we have $M_1 M_2 = \pm \begin{pmatrix} C_1C_2 & - C_1 D_{C_2} - D_{C_1} \left(C_2^T\right)^{-1} \\ 0 & \left( \left( C_1 C_2 \right)^T \right)^{-1} \end{pmatrix}$. Note also we have $- C_1 D_{C_2} - D_{C_1} \left(C_2^T\right)^{-1} = - \left( C_1 D_{C_2} + D_{C_1} \left(C_2^T\right)^{-1} \right) = -D_{C_1 C_2}$. Thus, $\mathcal{S}_p$ forms a subgroup of $GL(4, \mathbb{Z})$. 

Finally, observe that the images of each generator, $\alpha_*, \beta_*$, $\gamma_*$, $\delta_*$, $\rho_*$, and $\sigma_*$ all have the form of $\mathcal{S}_p$.
\end{proof}

In order to more easily study $*(\mathcal{G}_p)$, we consider a projection of $\mathcal{S}_p$ into $ \mathcal{A}_p$. Let $q : \mathcal{S}_p \to \mathcal{A}_p$ be the projection map onto the top left $2 \times 2$ block. 

\begin{lem} \label{lem:qinjective} The map $q$ is an injective homomorphism.
\end{lem}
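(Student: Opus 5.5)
The plan is to read off both claims directly from the block upper-triangular shape of elements of $\mathcal{S}_p$, using the multiplication formula already computed in the proof of Theorem \ref{thm:subgroups}. One subtlety affects the codomain. Every element of $\mathcal{S}_p$ carries an overall sign $\pm$, so $q$ really takes values in $\pm\mathcal{A}_p$. I will treat the target this way and check that nothing breaks.

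\emph{Homomorphism.} Take $M_i = s_i \begin{pmatrix} C_i & \varepsilon D_{C_i} \\ 0 & (C_i^T)^{-1}\end{pmatrix}$ with $s_i \in \{\pm 1\}$. The proof of Theorem \ref{thm:subgroups} shows that $M_1M_2 = s_1s_2\begin{pmatrix} C_1C_2 & \varepsilon D_{C_1C_2} \\ 0 & ((C_1C_2)^T)^{-1}\end{pmatrix}$. Its top-left block is $(s_1C_1)(s_2C_2)$, so $q(M_1M_2)=q(M_1)q(M_2)$. This is simply the fact that the top-left block of a product of block upper-triangular matrices is the product of the top-left blocks.

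\emph{Injectivity.} It suffices to show that the top-left block $sA$ of $M$ determines $M$. Once the pair $(s,A)$ is known, the other two blocks $s\,\varepsilon D_A$ and $s(A^T)^{-1}$ are determined by the definitions. The only issue is that the decomposition of a given matrix $N$ as $sA$ with $A\in\mathcal{A}_p$ might not be unique.

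For $p\ge 3$ I will argue that it is unique. The $(1,1)$ entry of $A\in\mathcal{A}_p$ is $\equiv 1 \pmod p$, so the $(1,1)$ entry of $N=sA$ is $\equiv s \pmod p$. Since $1\not\equiv -1 \pmod p$ when $p \geq 3$, this determines $s$, and then $A = sN$.

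The case $p=2$ is the main obstacle. Here $\mathcal{A}_2$ is closed under negation, so the two decompositions $N = A = -(-A)$ both occur, and I must check that they produce the same $4\times 4$ matrix. Write $A = \begin{pmatrix} 2k+1 & \ell \\ 2m & 2n+\Delta\end{pmatrix}$. Then $-A$ has parameters
\[ k' = -k-1, \quad \ell' = -\ell, \quad m' = -m, \quad \Delta' = \Delta, \quad n' = -n-\Delta, \]
where $\Delta'=\Delta$ because $\det(-A)=\det A$. Substituting into the definition gives $D_{-A} = \begin{pmatrix} -k-1+\Delta n+1 & -\Delta m \\ -m & 0\end{pmatrix} = -D_A$. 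Hence
\[ -\begin{pmatrix} -A & D_{-A} \\ 0 & ((-A)^T)^{-1}\end{pmatrix} = \begin{pmatrix} A & D_A \\ 0 & (A^T)^{-1}\end{pmatrix}, \]
so both decompositions give the same element of $\mathcal{S}_2$. Therefore $q$ is injective for every $p \geq 2$.
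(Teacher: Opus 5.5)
Your proof is correct and follows essentially the paper's argument. For $p \geq 3$ the sign is forced by the $(1,1)$ entry modulo $p$, and for $p=2$ one checks that the two decompositions $N = A = -(-A)$ give the same $4\times 4$ matrix; your identity $D_{-A} = -D_A$ does this more cleanly than the paper's entry-by-entry check, and you are right that $q$ really lands in $\pm\mathcal{A}_p$.
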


\begin{proof} It is evident that $q$ is a homomorphism, so it suffices to prove injectivity. Suppose $q(M_1) = \omega_1A_1 = \omega_2A_2 = q(M_2)$, where $A_i = \begin{pmatrix} k_i p + 1 \ & \ell_i \\ m_i p & n_i p + \Delta_i \end{pmatrix}$, and $\omega_i \in \set{\pm 1}$. 

First, observe that $\Delta_1 = \det(A_1) = \det(\omega_1 A_1) = \det(\omega_2 A_2) = \det(A_2) = \Delta_2$, regardless of the values of the $\omega_i$.

Now, suppose $p > 2$. Then, $\omega_1 = \omega_2$, or else $(\omega_1 k_1 - \omega_2 k_2)p = \omega_2 - \omega_1$ would force $p$ to divide $\pm 2$. Thus, we get immediately that $\ell_1 = \ell_2$, $m_1 = m_2$, and $k_1 = k_2$. Similarly, $(n_1 - n_2)p = \Delta_2 - \Delta_1 = 0$, in which case $n_1 = n_2$. It follows that $M_1 = M_2$.

On the other hand, suppose $p = 2$. If $\omega_1 = \omega_2$, then $M_1 = M_2$, as above. If $\omega_1 \neq \omega_2$, then we find that $m_2 = -m_1$, and $\omega_1(2k_1 + 1) = \omega_2(2k_2 + 1)$, which implies that $k_1 + k_2 = -1$. Also, $\omega_1(2 n_1 + \Delta_1) = \omega_2(2n_2 + \Delta_2)$, from which we find $n_1 + n_2 = \pm 1$, depending on whether $\Delta_1 = -1$ or $\Delta_1 = 1$, respectively. In the first case, $\omega_1(k_1 - \Delta_1 n_1) = (-\omega_2)((-k_2 - 1) - (-1) (1-n_2)) = (\omega_2)((k_2 + 1) + (-1) (1-n_2)) = \omega_2(k_2 + n_2) = \omega_2(k_2 - \Delta_2n_2)$. In the latter case, $\omega_1(k_1 - \Delta_1 n_1) = (-\omega_2)((-k_2 - 1) - (1)(-1 - n_2)) = (\omega_2)(k_2 - n_2) = \omega_2(k_2 - \Delta_2n_2)$. Either way, this implies that $M_1 = M_2$.
\end{proof}

We will show that $\beta^2$ generates the kernel of the star map. Let $\mathcal{G}_p' = \mathcal{G}_p / \langle \langle \beta^2 \rangle \rangle$. 

\begin{thm} \label{thm:kernel} The kernel of the star map is the normal subgroup generated by $\beta^2$.
\end{thm}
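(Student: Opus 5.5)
The plan is to show two things. First, $\langle\langle \beta^2 \rangle\rangle \subseteq \ker(\star)$. Second, the induced map $\mathcal{G}_p' \to \mathcal{S}_p$ is injective.

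The first inclusion is immediate from Lemma \ref{lemma:matrices}: $\beta_*$ is diagonal with entries $\pm 1$, so $\beta_*^2 = I$, and the kernel is normal.

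For the reverse inclusion, compose with $q$. By Lemma \ref{lem:qinjective}, $q$ is injective, so it suffices to show that $q \circ \star : \mathcal{G}_p' \to GL(2,\mathbb{Z})$ is injective. Adding $\beta^2 = 1$ to Cho's presentation in Theorem \ref{thm:Cho} gives presentations of $\mathcal{G}_p'$:
\begin{itemize}
\item for $p \ge 4$, $\mathbb{Z}_2 \oplus (\mathbb{Z}_2 * \mathbb{Z}_2 * \mathbb{Z}_2)$, generated by $\alpha$ and by $\beta, \gamma, \sigma$;
\item for $p=3$, $\mathbb{Z}_2 \oplus (\mathbb{Z}_2 * S_3)$, with $S_3 = \langle \delta, \gamma \rangle$;
\item for $p=2$, $\langle \beta, \rho, \gamma \mid \rho^4 = \gamma^2 = (\gamma\rho)^2 = \beta^2 = [\rho^2, \beta] = 1 \rangle$.
\end{itemize}
The images are the top-left blocks: $\alpha \mapsto -I$, $\beta \mapsto \mathrm{diag}(1,-1)$, $\gamma \mapsto \begin{pmatrix} 1 & 1 \\ 0 & -1\end{pmatrix}$, $\sigma \mapsto \begin{pmatrix} 1 & 0 \\ -p & -1\end{pmatrix}$, $\delta \mapsto \begin{pmatrix} -2 & -1 \\ 3 & 1\end{pmatrix}$, $\rho \mapsto \begin{pmatrix} 1 & 1 \\ -2 & -1\end{pmatrix}$.

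The strategy is to pass to $PGL(2,\mathbb{Z})$, prove a free-product decomposition of the image there, and then recover the sign separately. For the sign: in the cases $p\ge 3$, the central factor $\langle\alpha\rangle$ maps isomorphically onto $\{\pm I\}$. One then checks that no nontrivial element of the other factor maps to $-I$. The reason is that this other factor maps into matrices whose top-left entry is $\equiv 1 \pmod p$, by the form of $\mathcal{A}_p$, while $-1 \not\equiv 1 \pmod p$ for $p>2$.

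For $p\ge 4$, the index-two subgroup $\langle \beta\gamma, \beta\sigma \rangle$ of $\mathbb{Z}_2*\mathbb{Z}_2*\mathbb{Z}_2$ is free of rank $2$. Its images are the parabolics $\begin{pmatrix} 1&1\\0&1\end{pmatrix}$ and $\begin{pmatrix} 1&0\\p&1\end{pmatrix}$. The ping-pong lemma on $\mathbb{R}\cup\{\infty\}$ shows these generate a free group whenever $1\cdot p \ge 4$. Injectivity on this subgroup, together with the fact that $q(\beta_*)$ lies outside its image (by determinant), then gives injectivity on the whole free product.

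For $p=2,3$, ping-pong with the parabolics is no longer available, and this is where I expect the main work to be. My first attempt is to let the image act on the Farey tessellation, or on the Bass--Serre tree of $PSL(2,\mathbb{Z}) = \mathbb{Z}_2*\mathbb{Z}_3$ extended to $PGL(2,\mathbb{Z})$. The point is to identify the images as:
\begin{itemize}
\item for $p=3$, a finite $S_3$ stabilizing a triangle (this is $\langle\delta,\gamma\rangle$);
\item for $p=2$, a finite group stabilizing an edge or ideal square (the images of $\rho$ and $\gamma$), amalgamated or free-producted with the reflection $\beta$.
\end{itemize}
From there I would verify the normal-form condition directly: an alternating word in the factors moves a fundamental region strictly away from itself, so it cannot act trivially unless it is trivial.

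For $p=2$, the relation $[\rho^2,\beta]=1$ with $\rho_*^2 = -I$ shows that $\rho^2$ plays the role of $\alpha$. One must then check that the quotient by $\langle \rho^2 \rangle$ is $D_4/\mathbb{Z}_2 * \mathbb{Z}_2$, or the appropriate amalgam, and that it embeds in $PGL(2,\mathbb{Z})$.

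An alternative, if the geometric argument gets messy, is index counting. Compare $\mathcal{G}_p'$ modulo sign with the level-$p$ subgroup of $PGL(2,\mathbb{Z})$ cut out by $\mathcal{A}_p$. The image equals a finite-index subgroup whose presentation can be computed by Reidemeister--Schreier, and one would then match the two presentations.
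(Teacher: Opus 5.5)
Your reduction is sound. $\beta_*^2=I$ gives one inclusion, and Lemma~\ref{lem:qinjective} lets you work with $q\circ\star$ on $\mathcal{G}_p'$. For $p\ge 4$ your argument is essentially the paper's Lyndon--Ullman argument: the even-length subgroup is free on $\beta\gamma,\beta\sigma$, it maps to the parabolics $x\mapsto x+1$ and $x\mapsto x/(px+1)$, ping-pong applies, and $\det$ handles parity. Your mod-$p$ exclusion of $-I$ also makes explicit a point the paper's Case~3 leaves implicit.

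The gap is $p=2$ and $p=3$, which is where the paper does most of its work (normal forms plus sign patterns of matrix entries). There you only name frameworks. Nothing shows that the images of the finite factor and of $\beta$ generate a free product in $PGL(2,\mathbb{Z})$. Moreover, the natural tree version of your plan does not satisfy the ping-pong hypothesis:
\begin{itemize}
\item $\beta$ acts by $x\mapsto -x$ and flips the Farey edge $\{0,\infty\}$.
\item The finite factor fixes the triangle $\{-1,-\tfrac12,0\}$ (for $\langle\gamma,\delta\rangle$) or the edge $\{-1,0\}$ (for $\langle\rho,\gamma\rangle$).
\item The path between these passes through the triangle $\{-1,0,\infty\}$, and $\gamma$, acting by $x\mapsto -x-1$, fixes that triangle.
\end{itemize}
So a nontrivial element of the finite factor stabilizes the first edge of the path. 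Relatedly, $\beta$ and $\gamma$ share the cusp $\infty$, and $\beta\gamma$ is the parabolic $x\mapsto x+1$. The index-counting alternative is not a shortcut either: it needs an independent presentation of the image plus a Hopfian argument.

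What does work is ping-pong on $\overline{\mathbb{R}}$ with $X_+=(0,\infty)$ and $X_-=(-\infty,0)$. Here $\beta$ maps $X_-$ onto $X_+$, and every nontrivial element of the finite factor maps $X_+$ into $X_-$:
\begin{itemize}
\item For $p=3$, the elements $\gamma,\delta,\delta^2,\gamma\delta,\gamma\delta^2$ send $X_+$ to $(-\infty,-1)$, $(-1,-\tfrac23)$, $(-\tfrac12,-\tfrac13)$, $(-\tfrac13,0)$, $(-\tfrac23,-\tfrac12)$ respectively.
\item For $p=2$, the nontrivial elements $\rho,\gamma,\gamma\rho$ of $\langle\rho,\gamma\rangle/\langle\rho^2\rangle\cong\mathbb{Z}_2^2$ send $X_+$ to $(-1,-\tfrac12)$, $(-\infty,-1)$, $(-\tfrac12,0)$ respectively.
\end{itemize}
Since the finite factor has order at least $3$, ping-pong shows that $S_3*\mathbb{Z}_2$ and $\mathbb{Z}_2^2*\mathbb{Z}_2$ embed in $PGL(2,\mathbb{Z})$. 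Geometrically, these are the reflection groups of the triangles with ideal vertices $0,\infty$ and one angle $\pi/3$ (resp.\ $\pi/2$), bounded by $\mathrm{Re}\,z=0$, $\mathrm{Re}\,z=-\tfrac12$, and the mirror of $\gamma\delta$ (resp.\ $\gamma\rho$).

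Your sign arguments then finish the proof:
\begin{itemize}
\item For $p=3$, if $\alpha^{\epsilon}h$ maps to $I$, then $h$ maps to $\pm I$. So $h=1$ by the embedding, and then $\epsilon=0$.
\item For $p=2$, $\rho^2$ is central in $\mathcal{G}_2'$: it commutes with $\gamma$ inside $D_4$ and with $\beta$ by the relation. It maps to $-I\neq I$, so the kernel lies in $\{1,\rho^2\}$ and is trivial.
\end{itemize}
With this step supplied, your proof becomes a geometric counterpart of the paper's positivity argument.
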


\begin{proof} Let $r : \mathcal{G}_p \to \mathcal{G}_p'$ be the natural projection, and define $*' : \mathcal{G}_p' \to GL(4, \mathbb{Z})$ in the natural way so that $* = *' \circ r$, so we have

\[ \mathcal{G}_p \stackrel{r}{\longrightarrow} \mathcal{G}_p' \stackrel{*'}{\longrightarrow} GL(4, \mathbb{Z}) \stackrel{q}{\longrightarrow} GL(2, \mathbb{Z}).\]

We will again abuse notation slightly and refer to $r(\beta)$, $r(\gamma)$, and $r(\sigma)$ again as simply $\beta$, $\gamma$, and $\sigma$, respectively, and to $*'(\beta)$, $*'(\gamma)$, and $*'(\sigma)$ again as $\beta_*$, $\gamma_*$, and $\sigma_*$, respectively.

As the projection $q$ is injective, it will suffice to prove that $*'$ is injective in each of the separate cases, $p=2$, $p=3$, and $p \geq 4$.

\begin{description}
\item[Case 1] Suppose $p = 2$, and $\mathcal{G}_2 = \langle \, \beta, \rho, \gamma \ \mid \ \rho^4 = \gamma^2 = (\gamma \rho)^2 = \rho^2 \beta \rho^2 \beta^{-1} = 1 \, \rangle$. 
\begin{claim}
\label{claim:G2NormalForm} In $\mathcal{G}_2'$, any word can be expressed in the form 
\[ (\rho^2)^{\omega } \gamma^{\varepsilon_{n+1}} (\eta_{k_n}\gamma^{\varepsilon_n}\rho)\cdots (\eta_{k_2}\gamma^{\varepsilon_2}\rho) (\eta_{k_1}\gamma^{\varepsilon_1}\rho) \eta_{k_0} \gamma^{\varepsilon_{0}}, \]

where $\eta_k = (\beta \gamma)^{k-1} \beta$, (with $k$ factors of $\beta$, and $\eta_0$ the empty word), $n \geq 0$,  $k_1, k_2, \dots, k_{n-1} \geq 1$, $k_0, k_n \geq 0$, and $\omega , \varepsilon_0, \dots, \varepsilon_{n+1} \in \set{0, 1}$.

\end{claim}

\begin{proof}
In $\mathcal{G}'_2$, both $\beta$ and $\gamma$ have order 2, so $\beta$ and $\gamma$ may appear only with exponent 0 or 1. Since $\rho$ has order 4, and $\rho^2$ commutes with all other elements, wherever $\rho$ appears, it may appear with exponent 1, with the possible exception of a $\rho^2$ at the left of the word. (If $\rho^3$ appears, we rewrite it as $\rho^2 \rho$ and move the $\rho^2$ to the left.)

Now, $\rho$ and $\gamma$ commute, up to a factor of $\rho^2$, so we may push every instance of $\gamma$ to the left of any adjacent $\rho$. Any two $\rho$'s will be separated by a word that alternates between $\beta$ and $\gamma$. However, there cannot be a $\gamma$ on the left of this alternating word, as it would have been pushed past the $\rho$ on the left, so in particular there must be at least one $\beta$ between any two $\rho$'s. Thus, between any two instances of $\rho$ will appear $\eta_k \gamma^\varepsilon$ with $k \geq 1$, and $\varepsilon \in \set{0, 1}$.

The word, then, may either have $\rho$ on the right, or not. In the former case, $\varepsilon_0 = k_0 = 0$. Otherwise, the sub-word to the right of the rightmost $\rho$ is an alternating word in $\gamma$ and $\beta$, with no $\gamma$ on the left, and with either a $\beta$ or a $\gamma$ on the right.

Finally, between the leftmost single factor of $\rho$ and a possible factor of $\rho^2$, there is a (possibly empty) alternating word in $\gamma$ and $\beta$ that may have either on the left and either on the right.
\end{proof}

\begin{claim}
\label{claim:allnonnegallnonpos}
If $k_1, \dots, k_{n} \geq 1$, the matrix $\left( q \circ *^{'} \right) \left( \left( \eta_{k_{n}}\gamma^{\varepsilon_{n}}\rho \right) \cdots \left( \eta_{k_1}\gamma^{\varepsilon_1}\rho \right) \right) = \begin{pmatrix} a & b \\ c & d \end{pmatrix}$ has one of the following properties:
\begin{enumerate}
\item $a$, $c$, $d$ are positive and $b$ is nonnegative, or
\item $a$, $c$, $d$ are negative and $b$ is nonpositive.
\end{enumerate} 
\end{claim}

\begin{proof}
The proof is by induction on $n$. We have
\[ \left( q \circ *^{'} \right) \left( \eta_{k_{1}} \rho \right) = \begin{pmatrix} 2k_1 - 1 & k_1 \\ 2 & 1 \end{pmatrix} ,\]
and
\[ \left( q \circ *^{'} \right) \left( \eta_{k_{1}}\gamma \rho \right) = \begin{pmatrix} 1-2k_1 & 1-k_1 \\ -2 & -1 \end{pmatrix}.\]
If $\left( q \circ *^{'} \right) \left( \left( \eta_{k_{n-1}}\gamma^{\varepsilon_{n-1}}\rho \right) \cdots \left( \eta_{k_1}\gamma^{\varepsilon_1}\rho \right) \right) = \begin{pmatrix} a & b \\ c & d \end{pmatrix}$ has one of the properties, then $\left( q \circ *^{'} \right) \left( \left( \eta_{k_{n}}\rho \right) \left( \eta_{k_{n-1}}\gamma^{\varepsilon_{n-1}}\rho \right) \cdots \left( \eta_{k_1}\gamma^{\varepsilon_1}\rho \right) \right)$ is
\begin{align*} \begin{pmatrix} 2k_n -1 & k_n \\ 2 & 1 \end{pmatrix} \begin{pmatrix} a & b \\ c & d \end{pmatrix} & =  \begin{pmatrix} (2k_n - 1)a + k_n c & (2k_n - 1)b + k_n d \\ 2a + c & 2b + d \end{pmatrix},\end{align*}  
and $\left( q \circ *^{'} \right) \left( \left( \eta_{k_{n}} \gamma \rho \right) \left( \eta_{k_{n-1}}\gamma^{\varepsilon_{n-1}}\rho \right) \cdots \left( \eta_{k_1}\gamma^{\varepsilon_1}\rho \right) \right)$ is
\begin{align*} \begin{pmatrix} 1-2k_n  & 1-k_n \\ -2 & -1 \end{pmatrix} \begin{pmatrix} a & b \\ c & d \end{pmatrix} & = \begin{pmatrix} (1-2k_n)a + (1-k_n) c & (1-2k_n)b + (1-k_n) d \\ -2a - c & -2b - d \end{pmatrix},   
\end{align*}
both of which also have one of these properties.
\end{proof}

In light of Claim \ref{claim:allnonnegallnonpos}, we will call a word of the form $\left( \eta_{k_{n}}\gamma^{\varepsilon_{n}}\rho \right) \cdots \left( \eta_{k_1}\gamma^{\varepsilon_1}\rho \right)$ with $k_1, \dots, k_n \geq 1$ a \emph{non-trivial block}.

We can now prove that $ \left( q \circ *^{'} \right) $ is injective. Let $w$ be a generic word in $\mathcal{G}_2'$, as in Claim \ref{claim:G2NormalForm}. As $\left(q \circ *' \right)(\rho^2) = \begin{pmatrix} -1 & 0 \\ 0 & -1 \end{pmatrix}$, it will suffice to assume that $\omega  = 0$, and show that $\left( q \circ *' \right)(w)$ is neither plus nor minus the identity unless $w$ is trivial. Further, triviality is preserved by conjugation, so we may cyclically permute as desired. First, we will replace $w$ with a word of the form
\[ (\eta_{k_n}\gamma^{\varepsilon_n}\rho) (\eta_{k_{n-1}}\gamma^{\varepsilon_{n-1}}\rho)\cdots (\eta_{k_2}\gamma^{\varepsilon_2}\rho) (\eta_{k_1}\gamma^{\varepsilon_1}\rho) \eta_{k_0} \gamma^{\varepsilon_{0}}, \]
where $n \geq 0$,  $k_1, k_2, \dots, k_{n-1} \geq 1$, $k_0, k_n \geq 0$, and $\varepsilon_0, \dots, \varepsilon_{n} \in \set{0, 1}$

If $n = 0$, then up to cyclic permutation $w = \eta_{k_0} \gamma^{\varepsilon}$, for $\varepsilon \in \set{0, 1}$. This is either $\begin{pmatrix} 1 & k_0 \\ 0 & 1 \end{pmatrix}$ if $\varepsilon = 0$, or $\begin{pmatrix} 1 & 1-k_0 \\ 0 & -1 \end{pmatrix}$ if $\varepsilon=1$, giving (plus or minus) the identity only if $k_0 = \varepsilon = 0$ so that $w$ is the identity. If $n = 1$, and $k_0 = k_1 = 0$, then up to cyclic permutation, $w = \rho \gamma^\varepsilon$, for $\varepsilon \in \set{0, 1}$, both of which are demonstrably nontrivial under $\left( q \circ *'\right)$.

Otherwise, if $n > 1$ or if $k-1$ and at least one of $k_0$ or $k_1$ is positive, then $w$ can be cyclically permuted to be a product of a nontrivial block with a word of the form $\eta_k \gamma^{\varepsilon} \rho^\tau$ for some $k \geq 0$, $\varepsilon, \tau \in \set{0, 1}$. 

This leaves only a few cases left to check:
\begin{enumerate}
\item If $\tau = 0$, then it is easy to check that whether $k=0$ and $\varepsilon = 1$, or $k>0$ (and $\varepsilon = 0$, or $\varepsilon =1$), multiplying (on the right) a matrix $\begin{pmatrix} a & b \\ c & d \end{pmatrix}$ as in Claim \ref{claim:allnonnegallnonpos} will have a bottom left entry of $c$, which is nonzero.

\item On the other hand, suppose $\tau = 1$.

If $k > 0$, then in fact $w$ is itself cyclically equivalent to a nontrivial block. And otherwise, we compute
\[ \begin{pmatrix} a & b \\ c & d \end{pmatrix} \cdot \left(q \circ *'\right) \left( \rho \right) = \begin{pmatrix} a & b \\ c & d \end{pmatrix} \begin{pmatrix} 1 & 1 \\ -2 & -1 \end{pmatrix} = \begin{pmatrix} a-2b & a-b \\ c-2d & c-d \end{pmatrix},\]
which could only be (plus or minus) the identity if $a=b$ and $c=2d$,  
and
\[ \begin{pmatrix} a & b \\ c & d \end{pmatrix} \cdot \left(q \circ *'\right) \left( \gamma \rho \right) = \begin{pmatrix} a & b \\ c & d \end{pmatrix} \begin{pmatrix} -1 & 0 \\ 2 & 1 \end{pmatrix} = \begin{pmatrix} 2b-a & b \\ 2d-c & d \end{pmatrix}\]
which could only be (plus or minus) the identity if $b=0$. In both cases, it would be necessary that $-a = d = \pm1$, which is impossible as $a$ and $d$ have the same sign.
\end{enumerate}

\item[Case 2] Suppose $p = 3$, and $\mathcal{G}_3 = \langle \, \alpha \ \mid \ \alpha^2 = 1 \, \rangle \bigoplus \langle \, \beta, \, \delta, \, \gamma \ \mid \ \delta^3 = \gamma^2 = (\gamma \delta)^2 = 1 \, \rangle$.

\begin{claim}
\label{claim:G3NormalForm} In $\mathcal{G}_3'$, any word can be expressed in the form 
\[\alpha^{\varepsilon_3} \beta^{\varepsilon_2} \eta_n \cdots \eta_1 \beta^{\varepsilon_1},\] where $\varepsilon_i \in \set{0, 1}$ for $i = 1, 2, 3$ and $\eta_i = \beta \gamma^{e_i} \delta^{f_i}$ with $e_i \in \{0,1\}$ and $f_i \in \{0,1,2\}$ for $i = 1, 2, \ldots, n$, but $(e_i, f_i) \neq(0,0)$.
\end{claim}

\begin{proof}
Since $\alpha$ commutes with the other generators, and has order 2, we may assume that $\alpha$ appears at most once at the left of a word. Otherwise, we can think of the general word for an element of $\mathcal{G}'_3$ as being alternating powers of $\gamma$ and $\delta$ with individual $\beta$'s spread between them. Since $\gamma$ has order 2 and $\delta$ has order 3, this means we have sections of the word of the form $\eta = \beta \gamma^{b_1}\delta^{c_1} \cdots \gamma^{b_k} \delta^{c_k}$. However, the relations for $\mathcal G_3$ tell us that $\gamma \delta  = \delta^{2} \gamma$, so we can move all of the instances of $\gamma$ to the immediate right of $\beta$ in $\eta$, at the expense of changing the power of $\delta$. Then $\eta = \beta \gamma^{e} \delta^{f}$ for $e \in \{0,1\}$ and $f \in \{0,1,2\}$. On the right, there may either be a $\beta$ or not, so $\varepsilon_1$ is either 1 or 0, respectively, and on the left (immediately to the right of $\alpha$, if present), there may either be a $\beta$ or not, so $\varepsilon_2$ is either 1 (to cancel with the $\beta$ in $\eta_n$), or 0. Thus, we have the normal form, as claimed. 
\end{proof}

\begin{claim}
\label{claim:allposallneg}
No matrix $\left( q \circ *^{'} \right) \left( \eta_n \cdots \eta_1 \right)  $ is equal to $I$, $q(\alpha_*) = \begin{pmatrix} -1 & 0 \\ 0 & -1 \end{pmatrix}$, $q(\beta_*) = \begin{pmatrix} 1 & 0 \\ 0 & -1 \end{pmatrix}$, or $q(\alpha_* \beta_*) = \begin{pmatrix} -1 & 0 \\ 0 & 1 \end{pmatrix}$. \end{claim}

\begin{proof}
Each $\eta_i$ will be of the form $\beta \gamma$, $\beta \delta$, $\beta \delta^2$, $\beta \gamma \delta$, or $\beta \gamma \delta^2$. Observe that \[q (\beta_* \gamma_*) = \begin{pmatrix} 1 & 1  \\ 0 & 1\end{pmatrix}, \qquad q (\beta_* \delta_*) = \begin{pmatrix} -2 & -1  \\ -3 & -1\end{pmatrix}, \qquad q  (\beta_* \delta_*^2) = \begin{pmatrix} 1 & 1  \\ 3 & 2\end{pmatrix},\]

\[q (\beta_* \gamma_* \delta_*) = \begin{pmatrix} 1 & 0  \\ 3 & 1\end{pmatrix},\qquad  \mbox{ and } \qquad q (\beta_* \gamma_* \delta_*^2) = \begin{pmatrix} -2 & -1  \\ -3 & -2\end{pmatrix}.\]

Any power of $q( \beta_* \gamma_*)$ or $q(\beta_* \gamma_* \delta_*)$ will evidently satisfy the statement. 

If each of the $\eta_i \in \set{\beta \delta, \, \beta \delta^2, \, \beta \gamma \delta^2}$, then $(q \circ *)(\eta_i)$ will either be a positive matrix (all entries are strictly positive) or a negative matrix (all entries are strictly negative). Any product of positive matrices is a positive matrix, any product of two negative matrices will be a positive matrix, and any product between one positive matrix and one negative matrix will be a negative matrix. Any product of a positive or a negative matrix with any power of $q(\beta_* \gamma_*)$ or $q(\beta_* \gamma_* \delta_*)$ will again be positive or negative. And finally, $q(\beta_* \gamma_*) \cdot q(\beta_* \gamma_* \delta_*) = \begin{pmatrix} 4 & 1 \\ 3 & 1 \end{pmatrix}$ and $q(\beta_* \gamma_* \delta_*) \cdot q( \beta_* \gamma_*) = \begin{pmatrix} 1 & 1 \\ 3 & 3 \end{pmatrix}$ are both positive matrices. Observe that none of $I$, $q(\alpha_*)$, $q(\beta_*)$, or $q(\alpha_* \beta_*)$ are positive or negative matrices. The statement follows.
\end{proof}

Now, any word of the form from Claim \ref{claim:G3NormalForm} is cyclically conjugate to $w = \alpha^{\varepsilon_3} \beta^{\varepsilon_2} \eta_n \cdots \eta_1$, so Claim \ref{claim:allposallneg} shows that $q(w_*)$ will never be trivial unless $w_*$ is trivial.

\item[Case 3] Suppose $p \geq 4$, and $\mathcal{G}_p = \langle \, \alpha \ \mid \ \alpha^2 = 1 \, \rangle \bigoplus \langle \, \beta, \, \gamma, \, \sigma \ \mid \ \gamma^2 = \sigma^2 = 1\, \rangle$. 

Suppose that $q(w_*) = I$, where $w = g_1 g_2 \cdots g_m$ is a reduced word in $\mathcal{G}_p'$. In fact, we may assume that $g_i \in \set{\beta, \gamma, \sigma}$ for all $i$, and that $m$ is even, since $\det(q(\beta_*)) = \det(q(\gamma_*)) = \det(q(\sigma_*)) = -1$, and $\alpha$ commutes with the other generators. Then, as $q(\beta_*)^2 = I$, we may insert a pair of $\beta$'s after every other letter, so that \[q({g_1}_* \beta_* \beta_* {g_2}_* {g_3}_* \beta_* \beta_* {g_4}_* \cdots {g_{m-2}}_* {g_{m-1}}_* \beta_* \beta_* {g_{m}}_* ) = I.\] That is to say, $q({v_1}_* {v_2}_* \cdots {v_n}_*) = I$, where $v_i \in \set{ \beta \gamma, \gamma \beta, \beta \sigma, \sigma \beta}$, after cancelling any remaining $\beta_*^2$'s. As $\left(q(\beta_* \gamma_*)\right)^{-1} = q(\gamma_* \beta_*)$, and $\left(q(\beta_* \sigma_*)\right)^{-1} = q(\sigma_* \beta_*)$, this means that $q({v_1}_* {v_2}_* \cdots {v_n}_*)$ is a word generated by $q(\beta_* \gamma_*) = \begin{pmatrix} 1 & 1 \\ 0 & 1 \end{pmatrix}$ and $q(\beta_* \sigma_*) = \begin{pmatrix} 1 & 0 \\ p & 1 \end{pmatrix}$. By the second corollary of \cite{LynUllPR2x2MGFP}, $\begin{pmatrix} 1 & 1 \\ 0 & 1 \end{pmatrix}$ and $\begin{pmatrix} 1 & 0 \\ p & 1 \end{pmatrix}$ generate a free group, so that the only way $q({v_1}_* {v_2}_* \cdots {v_n}_*) = I$ is if $v_1 v_2 \cdots v_n$ is trivial, which in turn implies that $w$ was trivial. We conclude that $*'$ is injective, as claimed. 

\end{description}

This completes the proof of Theorem \ref{thm:kernel}.
\end{proof}

We can characterize the subgroup $*(\mathcal{G}_p)$ for each $p$.

\begin{thm} \label{thm:preciseimages} For $p \in \set{2, 3, 4}$, \[*(\mathcal{G}_p) = \mathcal{S}_p,\] and if $p \geq 5$, \[*(\mathcal{G}_p) = \set{ \pm \begin{pmatrix} B & D_B \\ 0 & (B^T)^{-1} \end{pmatrix} \, : \, B \in \mathcal{B}_p }.\]
\end{thm}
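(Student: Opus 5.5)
By Theorem \ref{thm:subgroups} we already have $*(\mathcal{G}_p) \subseteq \mathcal{S}_p$, and by Lemma \ref{lem:qinjective} the projection $q$ is injective on $\mathcal{S}_p$. So it suffices to identify the subgroup $q(*(\mathcal{G}_p))$ of $q(\mathcal{S}_p) = \pm\mathcal{A}_p \subset GL(2,\mathbb{Z})$. Concretely, the task is to show that $q(*(\mathcal{G}_p))$ equals all of $\pm \mathcal{A}_p$ when $p \le 4$, and equals $\pm\mathcal{B}_p$ when $p \geq 5$. Note that $\pm\mathcal{A}_p$ is, up to sign, the congruence subgroup of matrices congruent to $\begin{pmatrix} 1 & * \\ 0 & \pm 1\end{pmatrix}$ modulo $p$. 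It contains $T=\begin{pmatrix}1&1\\0&1\end{pmatrix}$, $L_p=\begin{pmatrix}1&0\\p&1\end{pmatrix}$, $\mathrm{diag}(1,-1)$ and $-I$.

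From the computations already made we have $q(\beta_*\gamma_*)=T$, $q(\beta_*)=\mathrm{diag}(1,-1)$, and $q(\alpha_*)=-I$ when $p\ge 3$ (for $p=2$, $-I = q(\rho_*^2)$). In addition, $q(\beta_*\sigma_*)=L_p$ for $p\ge4$, while for $p=2,3$ a suitable product does the same job. For $p=3$, I expect $q(\beta_*\gamma_*\delta_*)=\begin{pmatrix}1&0\\3&1\end{pmatrix}$ to give $L_3$. For $p=2$, $q(\rho_*)$ combined with $q(\gamma_*)$ should produce $L_2$ or its inverse.

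For $p\in\{2,3,4\}$ the plan is a Euclidean descent. Take $A=\begin{pmatrix}kp+1&\ell\\mp&np+\Delta\end{pmatrix}\in\mathcal{A}_p$ and multiply on the left by powers of $T$ and $L_p$ to shrink the first column $(kp+1,mp)^T$. Multiplying by $T^j$ replaces $kp+1$ by $kp+1+jmp$, and multiplying by $L_p^j$ replaces $mp$ by $mp+jp(kp+1)$. Reduction therefore stalls only when $|kp+1| \le |mp|/2$ and $|mp|\le p|kp+1|/2$ simultaneously. For $p\le4$ these two inequalities are incompatible unless the column is $(\pm1,0)^T$; the borderline cases, where $p=4$ or where ties occur, are handled using the torsion elements coming from $\rho$ (for $p=2$) and $\delta$ (for $p=3$). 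Once the column is $(\pm1,0)^T$, the matrix is upper triangular with diagonal entries $\pm1$, hence a product of a power of $T$, $\mathrm{diag}(1,-1)$ and $\pm I$, all of which lie in the image. This proves surjectivity onto $\mathcal{S}_p$. This is essentially the classical fact that $\Gamma_1(p)$ is generated by $T$, $L_p$ and $\pm I$ for $p\le 4$.

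For $p\ge5$, $T$ and $L_p$ generate a free group by the result of \cite{LynUllPR2x2MGFP} already used in the proof of Theorem \ref{thm:kernel}, and this free group has infinite index. The plan has two parts.

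\emph{(i) The image lies in $\pm\mathcal{B}_p$.} I would use a ping-pong argument on reduced words in $T^{\pm1}$ and $L_p^{\pm1}$, decorated with $\mathrm{diag}(1,-1)$ and $\pm I$. Each such word falls into one of two regimes. In the first, the last syllable is a power of $T$, which makes the first row dominant, so $p|\ell|\le 2|np+\Delta|$ fails in the right direction. In the second, the last syllable is a power of $L_p$, which makes the first column dominant. An induction on syllable length should then show that one of the two inequalities defining $\mathcal{B}_p$ always holds. Here $p\geq5$ guarantees that each multiplication by $T^{j}$ or $L_p^j$ with $j\neq0$ at least doubles the dominant entry, which is where the constant $2$ comes from.

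\emph{(ii) Every $B\in\mathcal{B}_p$ lies in the image.} I would run the same Euclidean descent as in the case $p\le4$. The defining inequality of $\mathcal{B}_p$ is exactly what allows one step of reduction by $T^{\pm1}$ or $L_p^{\pm1}$. The essential point is that the reduced matrix again satisfies one of the $\mathcal{B}_p$ inequalities, so the descent continues until the matrix is upper triangular and the argument finishes as before.

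The main obstacle is this last point: proving that a single reduction step preserves membership in $\mathcal{B}_p$. This requires a careful case analysis on the signs of $kp+1$, $mp$, $\ell$ and $np+\Delta$, and on which of the two inequalities holds. I would attack it by normalizing signs with $\pm I$ and $\mathrm{diag}(1,-1)$ first, and then using $\det B=\pm1$ to compare the two columns.
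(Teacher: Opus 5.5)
Your treatment of $p\le 4$ is correct and is essentially the paper's argument. The paper descends on $\kappa(A)=|kp+1|+|np+\Delta|$ rather than on the first column, but it uses the same elements $T=q(\beta_*\gamma_*)$, $L_p$, $q(\beta_*)$ and $-I$, and the Euclidean step never stalls for $p\le 4$. Part (i) for $p\ge5$ is the paper's Lemma~\ref{lem:inequalities}: a ping-pong argument with $\{|x|\ge 1/2\}$ and $\{|x|\le 2/p\}$, whose endpoints are the fixed points of $q(\gamma_*)$ and $q(\sigma_*)$. One correction to your sketch: a leftmost power of $T$ puts both $A(0)$ and $A(\infty)$ in $\{|x|\ge 1/2\}$, so it is $|mp|\le 2|kp+1|$ that holds.

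The gap is part (ii), and it cannot be closed. The step you single out as the main obstacle is false, and so is the inclusion $\pm\mathcal{B}_p\subseteq\mathcal{H}_p=q(*(\mathcal{G}_p))$ for $p\ge5$.
\begin{itemize}
\item \emph{Counterexample at $p=5$.} Take $B=\begin{pmatrix}-19&-6\\-35&-11\end{pmatrix}$, i.e.\ $k=-4$, $\ell=-6$, $m=-7$, $n=-2$, $\Delta=-1$. Since $|mp|=35\le 38=2|kp+1|$, we have $B\in\mathcal{B}_5$. The descent step produces $T^{-1}B=\begin{pmatrix}16&5\\-35&-11\end{pmatrix}\in\mathcal{A}_5$, which violates both inequalities ($35>32$ and $25>22$). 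Also $-T^{-1}B\notin\mathcal{A}_5$, so your part (i) gives $T^{-1}B\notin\mathcal{H}_5$. Since $T\in\mathcal{H}_5$, it follows that $B\notin\mathcal{H}_5$.
\item \emph{The failure is structural.} If $A\in\mathcal{A}_p\setminus\mathcal{B}_p$, then $|kp+1\pm mp|\ge|mp|-|kp+1|>|mp|/2$, so $T^{\pm1}A\in\mathcal{B}_p$. Hence $\pm\mathcal{B}_p$ is closed under products only if it equals $\pm\mathcal{A}_p$. That fails for every $p\ge5$, with witnesses $\begin{pmatrix}16&5\\35&11\end{pmatrix}$ for $p=5$, $\begin{pmatrix}7&2\\18&5\end{pmatrix}$ for $p=6$, and $\begin{pmatrix}1-3p&p\\-9p&1+3p\end{pmatrix}$ for $p\ge7$.
\item \emph{Your own remark already signals this.} You note that $\langle T,L_p\rangle$ has infinite index. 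But every coset $\mathcal{H}_pA$ with $A\in\pm\mathcal{A}_p$ contains some $T^jA\in\pm\mathcal{B}_p$. So $\mathcal{H}_p=\pm\mathcal{B}_p$ would force $\mathcal{H}_p=\pm\mathcal{A}_p$, contradicting infinite index.
\end{itemize}
The paper's proof has the same hole: its induction on $\kappa$ applies the inductive hypothesis to the reduced matrix without checking that it stays in $\mathcal{B}_p$. So no amount of sign normalisation will rescue your descent. For $p\ge5$ only $\mathcal{H}_p\subseteq\pm\mathcal{B}_p$ can be proved. An exact description of the image needs a condition imposed at every stage of the descent, equivalently the alternating $T$/$L_p$ normal form from ping-pong, not a single pair of inequalities.
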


\begin{rem} \label{rem:inequalitytrivial} In fact, even for $p \in \set{2, 3, 4}$, $*(\mathcal{G}_p) = \set{ \pm \begin{pmatrix} B & D_B \\ 0 & (B^T)^{-1} \end{pmatrix} \, : \, B \in \mathcal{B}_p }$, as the condition that $|mp | \leq 2|kp + 1|$ or $p| \ell |  \leq  2 | np + \Delta |$ is trivially satisfied by all matrices in $\mathcal{S}_p$. To see this, note that if neither inequality were satisfied, then 
\[|pm \cdot \ell| = \frac{1}{p} \cdot |pm| \cdot p| \ell | \geq \frac{1}{p} ( 2|kp+1| + 1)( 2|np+\Delta| + 1 ) \geq \frac{4}{p} (|kp+1|)( |np+\Delta| ) + \frac{5}{p}.\] When $2 \leq p \leq 4$, this inequality would preclude $\Delta = \pm 1$. However, for $p \geq 5$, the inequalities constitute a nontrivial condition, carving out a proper subgroup of $\mathcal{S}_p$. 
\end{rem}

In order to prove Theorem \ref{thm:preciseimages}, we develop some structure. In light of Lemma \ref{lem:qinjective}, it will suffice to consider the $2 \times 2$ matrices, $q(\mathcal{S}_p)$, which forms a subgroup of $\mathcal{A}_p$. 

Let $\mathcal{H}_p = q\left( *(\mathcal{G}_p) \right)$. We first prove the following lemma.

\begin{lem} \label{lem:inequalities} For all $p \geq 4$,  \[\mathcal{H}_p \subseteq  \mathcal{B}_p .\]
\end{lem}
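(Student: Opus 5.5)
The plan is to reduce to the free group generated by two unipotent matrices and run a ping-pong argument on the columns. For $p \geq 4$, $\mathcal{H}_p$ is generated by $q(\alpha_*) = -I$, $q(\beta_*) = \mathrm{diag}(1,-1)$, $q(\gamma_*)$ and $q(\sigma_*)$.

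First I make the reduction. As in Case 3 of the proof of Theorem \ref{thm:kernel}, write a word in $\beta,\gamma,\sigma$ by inserting pairs $\beta\beta$. Every element of $\mathcal{H}_p$ then has the form $\pm\, q(\beta_*)^{\varepsilon}\, M$, where $\varepsilon\in\set{0,1}$ and $M$ is a word in
\[U = q(\beta_*\gamma_*) = \begin{pmatrix} 1 & 1 \\ 0 & 1\end{pmatrix}, \qquad L = q(\beta_*\sigma_*) = \begin{pmatrix} 1 & 0 \\ p & 1\end{pmatrix}\]
and their inverses. Multiplying by $-I$ or on the left by $\mathrm{diag}(1,-1)$ only changes signs of entries, so it preserves the absolute values $|a|,|b|,|c|,|d|$ of a matrix $\begin{pmatrix} a & b \\ c & d\end{pmatrix}$. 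The defining inequalities of $\mathcal{B}_p$ read $|c|\le 2|a|$ or $p|b|\le 2|d|$, and they depend only on these absolute values. Also $\mathcal{H}_p\subseteq\mathcal{A}_p$ by Theorem \ref{thm:subgroups}. So it suffices to show that every $M\in\langle U,L\rangle$ satisfies one of the two inequalities.

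Next I set up the ping-pong on column vectors $(x,y)^T$. Let
\[X = \set{(x,y) : |y|\le 2|x|}, \qquad Y=\set{(x,y) : p|x|\le 2|y|}.\]
For $n\neq 0$, $U^n(x,y)^T=(x+ny,y)^T$. If $(x,y)\in Y$, then $|x+ny|\ge |y|-|x|\ge (1-2/p)|y|\ge |y|/2$, so $U^n(Y)\subseteq X$. Similarly $L^n(x,y)^T=(x,y+npx)^T$. If $(x,y)\in X$, then $|y+npx|\ge p|x|-|y|\ge (p-2)|x|\ge (p/2)|x|$, and this uses $p\ge 4$; so $L^n(X)\subseteq Y$. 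This is the step where the hypothesis $p\geq 4$ enters, and it is the step I expect to require the most care: the sets $X$ and $Y$ must be chosen so that they both encode exactly the inequalities of $\mathcal{B}_p$ and are swapped by nonzero powers of $U$ and $L$.

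Now I apply this to the columns. Write $M$ as an alternating product of nonzero powers of $U$ and $L$; by the second corollary of \cite{LynUllPR2x2MGFP}, as used in Theorem \ref{thm:kernel}, this syllable decomposition is unique. The columns of $M$ are $Me_1$ and $Me_2$. Since $Ue_1=e_1$ and $Le_2=e_2$, I may delete a rightmost $U$-syllable when computing $Me_1$, and a rightmost $L$-syllable when computing $Me_2$. Then $e_1\in X$ is first hit by an $L$-syllable, and $e_2\in Y$ is first hit by a $U$-syllable. By the ping-pong inclusions, each column ends up in $X$ if the leftmost syllable of $M$ is a power of $U$, and in $Y$ if it is a power of $L$. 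The degenerate cases ($M$ the identity, or a single syllable fixing the relevant $e_i$) leave that column as $e_1\in X$ or $e_2\in Y$, which is consistent with this description.

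It remains to read off the conclusion. If the leftmost syllable is a power of $U$ (or $M=I$), the first column $(a,c)$ lies in $X$, i.e.\ $|mp|=|c|\le 2|a|=2|kp+1|$. If the leftmost syllable is a power of $L$, the second column $(b,d)$ lies in $Y$, i.e.\ $p|\ell|\le 2|np+\Delta|$. In either case $M\in\mathcal{B}_p$, and hence $\mathcal{H}_p\subseteq\mathcal{B}_p$.
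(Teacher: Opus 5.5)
Your proof is correct and is essentially the paper's ping-pong argument: projectively, your cones $X$ and $Y$ are the paper's regions $I_\gamma$ and $I_\sigma$, and your first and second columns $Me_1$, $Me_2$ are the paper's points $q(w_*)(\infty)$ and $q(w_*)(0)$. The main difference is that you run the ping-pong with the parabolic generators $U=q(\beta_*\gamma_*)$ and $L=q(\beta_*\sigma_*)$ of the index-two subgroup rather than with the involutions $\beta,\gamma,\sigma$. This lets you use symmetric closed cones and skip the half-intervals $I^{\pm}$ and the $p=4$ endpoint claim. Two small corrections: $p\ge 4$ is used in both containments (the $U$-step needs $1-2/p\ge 1/2$), not only in the $L$-step, and you do not need uniqueness of the syllable decomposition, since any alternating decomposition gives the conclusion.
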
 

\begin{proof} First, observe that the map from $\mathcal{G}_p$ to $\mathcal{H}_p$ factors through \[\mathcal{G}_p'  = \mathcal{G}_p / \langle \langle \beta^2 \rangle \rangle. \]

In this case, $p \geq 4$, so $\mathcal{G}_p' = \langle \, \alpha \mid \alpha^2 = 1 \, \rangle \oplus \langle \, \beta, \, \gamma, \, \sigma \mid \beta^2 = \gamma^2 = \sigma^2= 1 \, \rangle$. We will abuse notation slightly and refer the generators in $\mathcal{G}_p'$ again as simply $\alpha$, $\beta$, $\gamma$, and $\sigma$.

To prove this lemma, we will induct on the reduced word length of elements $w'$ of $\mathcal{G}_p'$. Without loss of generality, we may ignore $\alpha$, and assume that all matrices are products of images of $\beta$, $\gamma$, and $\sigma$. Consider a matrix $A = \begin{pmatrix} a & b \\ c & d \end{pmatrix} \in \mathcal{H}_p$ as a M\"{o}bius transformation, acting on the extended real line, $\overline{\mathbb{R}} = \mathbb{R} \cup \set{\infty}$. Observe that $q(\gamma_*)(x) = -x - 1$, $q(\sigma_*)(x) = \frac{x}{-px - 1}$, and $q(\beta_*)(x) = -x$. For a reduced nontrivial word $w' \in \mathcal{G}_p'$, let $LL(w')$ refer to the smallest, left-most subword that contains either a $\gamma$ or a $\sigma$. Thus, $LL(w') \in \set{\gamma, \sigma, \beta \gamma, \beta \sigma, \emptyset }$, and $LL(\beta) = \emptyset$. 

Observe that $q(\gamma_*)$ has fixed points $-\frac{1}{2}$ and $\infty$, and exchanges the two complementary intervals of $\overline{\mathbb{R}} \rmv \set{-\frac{1}{2}, \infty}$. Similarly, $q(\sigma_*)$ has fixed points $-\frac{2}{p}$ and $0$,  $q(\beta_*)$ has fixed points $0$ and $\infty$, and both exchange the two complementary intervals of their respective fixed points. So, let 
\[ I_\gamma^- = \left( -\infty, -\frac{1}{2} \right] \cup \set{\infty}, \qquad I_\gamma^+ = \left[\frac{1}{2}, \infty \right) \cup \set{\infty}, \qquad I_\gamma  = I_\gamma^- \cup I_\gamma^+,  \]
and
\[ I_\sigma^- = \left[ -\frac{2}{p}, 0 \right], \qquad I_\sigma^+ = \left[ 0, \frac{2}{p} \right], \qquad I_\sigma = I_\sigma^- \cup I_\sigma^+.\]
Finally, set $P = \set{0, \infty}$.

We make an initial observation. If $p > 4$, then $I_\gamma$ and $I_\sigma$ are disjoint. If $p = 4$, then $\frac{2}{p} = \frac{1}{2}$, and the two intervals $I_\sigma$ and $I_\gamma$ overlap at endpoints. However,

\begin{claim}  \label{claim:overlap} If $p = 4$,  $q(w_*)(0) \neq \pm \frac{1}{2}$  and $q(w_*)(\infty) \neq \pm \frac{1}{2}$. 
\end{claim}

\begin{proof}
If $q(w_*) = A = \begin{pmatrix} a & b \\ c & d \end{pmatrix}$, then $A(0) = \frac{b}{d}$ and $A(\infty) = \frac{a}{c}$. But if $\frac{b}{d} = \pm \frac{1}{2}$, this would mean that $d = \pm 2b$, and if $\frac{a}{c} = \pm \frac{1}{2}$, then $c = \pm 2a$. However, since $\pm A \in \mathcal{A}_p$, $\pm a = kp + 1$ and $\pm b = \ell$ for some $k, \ell \in \mathbb{Z}$, $\pm c = mp$, and $\pm d = np + \Delta$ for some $m, n \in \mathbb{Z}$. Neither of these are then possible if $p = 4$.
\end{proof}

\begin{claim} \label{claim:images} For a reduced nontrivial word $w \in \mathcal{G}_p$, 
\begin{enumerate}
\item \label{case:G} if $LL(w') = \gamma$, then $q(w_*)(P) \subseteq I_\gamma^-$, and 
\item \label{case:S} if $LL(w') = \sigma$, then $q(w_*)(P) \subseteq I_\sigma^-$.
\end{enumerate}

\end{claim}

\begin{proof} We prove the claim by induction on reduced wordlength of words in $\mathcal{G}_p'$. 

Suppose $w \in \mathcal{G}_p$ and $|w'|= 1$. If $LL(w') = \gamma$, then, $q(w_*) = q(\gamma_*)$. Thus, $q(\gamma_*)$ fixes $\infty$, and $q(\gamma_*)(0) = -1 \in I_\gamma^-$, which establishes (\ref{case:G}). If $LL(w') = \sigma$, then $q(w_*) = q(\sigma_*)$. Thus, $q(\sigma_*)$ fixes $0$, and $q(\sigma_*)(\infty) = -\frac{1}{p} \in I_\sigma^-$, which establishes (\ref{case:S}).

If $w \in \mathcal{G}_p$ and $|w'| = 2$, then there are 6 possible words that $w'$ can represent. Four of them satisfy one of the hypotheses of Claim \ref{claim:images}: $w' = \gamma \sigma$ and $\gamma \beta$ satisfy the hypothesis of (\ref{case:G}), while  $w' = \sigma \gamma$ and $w' =  \sigma \beta $ satisfy the hypothesis of (\ref{case:S}). Each of these may be checked directly, as $q(\gamma_* \sigma_*) = \begin{pmatrix} -p+1 & -1 \\ p & 1 \end{pmatrix}$, $q(\gamma_* \beta_*)= \begin{pmatrix} 1 & -1 \\ 0 & 1 \end{pmatrix}$, $q(\sigma_* \gamma_*) = \begin{pmatrix} 1 & 1 \\ -p & -p+1 \end{pmatrix}$, and $q(\sigma_* \beta_*)= \begin{pmatrix} 1 & 0 \\ -p & 1 \end{pmatrix}$.

Now, assume that for $n > 1$, the claim holds for all words with length less than $n$, and suppose $w' \in \mathcal{G}_p'$ with $|w'| = n$. 
In order for the hypotheses of Claim \ref{claim:images} to apply, it must be that $w' = gv$, for some $v \in \mathcal{G}_p'$ with $|v| = n-1$, and $g \in \set{\gamma, \sigma}$. 

If $g = \gamma$, then $LL(v) \in \set{\sigma,  \beta \sigma,  \beta \gamma, \emptyset}$. Suppose $LL(v) = \emptyset$. Then, $v = \beta$, so $q(v_*)$ fixes both $0$ and $\infty$, so $q(w_*)(P) = q(\gamma_*)(P) \subseteq I_\gamma^-$. Suppose $LL(v) \in \set{\sigma, \beta \sigma}$. Then, inductively, $q(v_*)(P) \in I_\sigma^{\pm}$, and by Claim \ref{claim:overlap}, $q(v_*)(P)$ is in the complement of $[-\infty, -\frac{1}{2}]$, so that $q(w_*)(P) \subseteq I_\gamma^-$. Suppose finally that $LL(v) = \beta \gamma$. Then inductively, $q(v_*)(P) \subseteq I_\gamma^+$, so that $q(w_*)(P) \subseteq I_\gamma^-$. 

Similarly, if $g = \sigma$, then
$LL(v) \in \set{\gamma, \beta \gamma, \beta \sigma, \emptyset}$. Suppose $LL(v) = \emptyset$. Then, $v = \beta$, so $q(v_*)$ fixes both $0$ and $\infty$, so $q(w_*)(P) = q(\sigma_*)(P) \subseteq I_\sigma^-$. Suppose $LL(v) \in \set{\gamma, \beta \gamma}$. Then, inductively, $q(v_*)(P) \in I_\gamma^{\pm}$, and by Claim \ref{claim:overlap}, $q(v_*)(P)$ is in the complement of $[-\frac{2}{p}, 0]$, so that $q(w_*)(P) \subseteq I_\sigma^-$. Suppose, finally, that $LL(v) = \beta \sigma$. Then, inductively, $q(v_*)(P) \subset I_\sigma^+$, so that $q(w_*)(P) \subseteq I_\sigma^-$.
\end{proof}

We are now in a position to proceed with the proof of Lemma \ref{lem:inequalities}. Let $w \in \mathcal{G}_p$ with reduced word $w' \in \mathcal{G}_p'$, so that $q(w_*) = \begin{pmatrix} kp + 1 & \ell \\ mp & np + \Delta \end{pmatrix} \in \mathcal{H}_p$ (still ignoring $\alpha$). Observe that $q(w_*) \in \mathcal{B}_p$ if and only if $q(\beta_* w_*) \in \mathcal{B}_p$, so we may assume that $LL(w') \neq \beta$. If $LL(w') = \gamma$, then by Claim \ref{claim:images}, $q(w_*)(\infty) \in I_\gamma^-$. Observe that $q(w_*)(\infty) = \infty$ if and only if $q(w_*) = \begin{pmatrix} kp + 1 & \ell \\ 0 & np + \Delta \end{pmatrix}$, which is evidently in $\mathcal{B}_p$. On the other hand, if $q(w_*)(\infty) \neq \infty$, then since $q(w_*)(\infty) \in I_\gamma^-$, we have $|\frac{kp+1}{mp} | = |q(w_*)(\infty)| \geq \frac{1}{2}$, from which it follows that $|mp| \leq 2|kp+1|$. On the other hand, if $LL(w')  = \sigma$, then by Claim \ref{claim:images}, $q(w_*)(0) \in I_\sigma^-$. This means that $|\frac{\ell}{np+\Delta}| = |q(w_*)(0)| \leq \frac{2}{p}$, from which it follows that $p|\ell | \leq 2|np+\Delta|$. Thus, in all cases, $q(w_*) \in \mathcal{B}_p$.
\end{proof}

We can now prove Theorem \ref{thm:preciseimages}.
\begin{proof}
Theorem \ref{thm:subgroups}, Lemma \ref{lem:inequalities}, and Remark \ref{rem:inequalitytrivial} show that we have the inclusion $*(\mathcal{G}_p) \subseteq \set{ \pm \begin{pmatrix} B & D_B \\ 0 & (B^T)^{-1} \end{pmatrix} \, : \, B \in \mathcal{B}_p}$ for all $p$. It remains only to show the reverse containments. 

Let $A =  \pm \begin{pmatrix} kp+1 & \ell \\ mp & np + \Delta \end{pmatrix}$ be an element of $\mathcal{S}_p$. We will induct on the sum of the absolute values of the diagonal entries. Let $\kappa(A) = |kp + 1| + |np + \Delta|$. Suppose, first, that $\kappa(A) = 2$. Then $mp = 0$ or $\ell = 0$. Observe that $q((\beta_* \gamma_*)^\ell) = \begin{pmatrix} 1 & \ell \\ 0 & 1 \end{pmatrix}$, for any value of $p$. For $p = 2$, $q(\rho_*^2 \beta_* \gamma_* \rho_*)^m = \begin{pmatrix} 1 & 0 \\ 2m & 1 \end{pmatrix}$, for $p = 3$, $q(\beta_*\gamma_*\delta_*)^m = \begin{pmatrix} 1 & 0 \\ 3m & 1 \end{pmatrix}$, and for $p \geq 4$, $q(\beta_* \sigma_*)^m = \begin{pmatrix} 1 & 0 \\ mp & 1 \end{pmatrix}$. Further, if $X = \begin{pmatrix} a & b \\ c & d \end{pmatrix}$, then $\begin{pmatrix} a & b \\ -c & -d \end{pmatrix} = q(\beta_*)X$, $\begin{pmatrix} d & b \\ c & a \end{pmatrix} = \pm q(\beta_*)X^{-1}q(\beta^*)$, and $-X = q(\alpha_*)X$, or $-X = q(\rho_*^2)X$, so that we can generate any matrix $A$ with $\kappa(A) = 2$, regardless of the individual signs of the entries, or the value of $p$.

Now, suppose $\kappa(A) > 2$. In light of Remark \ref{rem:inequalitytrivial}, we may assume, regardless of $p$, that $|mp| \leq 2|kp+1|$ or that $p|\ell| \leq 2|np + \Delta|$. Since $\kappa(A) > 2$, we have $0 < |m|, |\ell|$. So, if $0 < |mp| < 2|kp+1|$, then multiplying $A$ on the left by $\begin{pmatrix} 1 & \pm1 \\ 0 & 1 \end{pmatrix}$, both of which are generated by $q(\beta_*)$ and $q(\gamma_*)$, as noted above, results in $\begin{pmatrix} kp+1\pm mp & \ell \pm (np + \Delta) \\ mp & np + \Delta \end{pmatrix}$, one of which will have a reduced value of $\kappa$. Similarly, if $0 < p|\ell| < 2|np + \Delta|$, then multiplying on the left by $\begin{pmatrix} 1 & 0 \\ \pm p & 1 \end{pmatrix}$, which can be generated in any of the $(q \circ *)(\mathcal{G}_p)$, as noted above, will result in $\begin{pmatrix} kp + 1 & \ell \\ mp \pm p(kp+1) & np + \Delta \pm p \ell \end{pmatrix}$, one of which will have a reduced value of $\kappa$. Thus, by the inductive hypothesis, all of $\pm \mathcal{B}_p$ is in the image of $(q \circ *)$.
\end{proof}

\section{Obstructions}
\label{section:Obstructions}
We now present obstructions to two given curves on the genus 2 Heegaard surface of $L(p, 1)$ being Goeritz equivalent.

\subsection{Homology Obstructions}
\label{subsection:HomologyObstructions}

Let $g_*$ be a matrix in $\mathcal S_p$ of the form \begin{equation} \tag{$g_*$} \label{eqn:gstar}  g_* = \omega \begin{pmatrix}  kp+1 & \ell & \varepsilon (k - \Delta n) & \varepsilon \Delta m \\
mp & np + \Delta & \varepsilon m & 0 \\ 0 & 0 & \Delta np + 1 & -\Delta  mp \\ 
0 & 0 & -\Delta \ell & \Delta(kp + 1) \end{pmatrix},\end{equation}

where $k, \ell, m, n \in \mathbb{Z}$, $\omega = \pm 1$, $\varepsilon = -1$ if $p = 3$, and $\varepsilon = 1$ otherwise, and $\Delta = \det \begin{pmatrix} kp+1 & \ell \\ mp & np + \Delta \end{pmatrix} = \pm 1$. Let $\textbf{v} = [K] = (a, x, b, y)^T$ and $\textbf{v}' = [K'] = (a', x', b', y')^T$ be vectors in $H_1(F; \mathbb{Z})$, and let $p$ be an integer with $p \ge 2$. Let \begin{equation} \tag{$s$} \label{eqn:s} s = ap+ \varepsilon b, \qquad s' = a'p + \varepsilon b',\end{equation}
 and let  
 \begin{equation} \tag{$\delta$} \label{eqn:delta} \delta = bs + xyp, \qquad \delta ' = b's' + x'y'p. \end{equation} 
 If $\delta, \delta' \neq 0$, we define 
 \begin{multline} \tag{$c{-}f$} \label{eqn:cdef} c = \frac{\Delta bx' - b'x}{\delta}, \hspace{.1in} d = \frac{bs' + \Delta xy'p}{\delta}, \hspace{.1in} e = \frac{ b's + \Delta x' y p}{\delta}, \mbox{ and } \hspace{.05in} f= \frac{ys' - \Delta y's}{\delta}, \\ c' = \frac{\Delta bx' - b'x}{\delta'}, \hspace{.1in} d' = \frac{bs' + \Delta xy'p}{\delta'}, \hspace{.1in}  e' = \frac{b's+ \Delta x' y p }{\delta'}, \mbox{ and } \hspace{.05in}  f'= \frac{ys' - \Delta y's}{\delta'}.\end{multline}

\begin{lem}[The Determinant Condition]
If $g_* \in \mathcal S_p$, then $$knp + \Delta k + n -\ell m = 0.$$
\end{lem}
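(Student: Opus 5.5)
The Determinant Condition is nothing more than the requirement that the top-left block of $g_*$ have determinant $\Delta$, rewritten. By the definition of $\mathcal{S}_p$, any $g_* \in \mathcal{S}_p$ has the form (\ref{eqn:gstar}), with upper-left $2\times 2$ block
\[A = \begin{pmatrix} kp+1 & \ell \\ mp & np+\Delta \end{pmatrix} \in \mathcal{A}_p,\]
and membership in $\mathcal{A}_p$ means exactly that $\det A = \Delta \in \set{\pm 1}$. (The overall sign $\omega$ does not matter, since we read off the parameters $k,\ell,m,n,\Delta$ from $A$ itself.) So the plan is to expand $\det A$ and simplify.

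Expanding gives
\[(kp+1)(np+\Delta) - \ell m p = knp^2 + \Delta kp + np + \Delta - \ell m p.\]
Setting this equal to $\Delta$ cancels the constant term and leaves
\[p\,(knp + \Delta k + n - \ell m) = 0.\]
Since $p \geq 2$ is nonzero, we may divide by $p$, which yields $knp + \Delta k + n - \ell m = 0$, as claimed.

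There is no real obstacle here; the only point needing care is noticing that the constant term $\Delta$ cancels, so that the factor $p$ can be divided out. As a consistency check, the lower-right block $(A^T)^{-1}$ shown in (\ref{eqn:gstar}) has entries $\Delta np+1$ and $\Delta(kp+1)$ on the diagonal. This agrees with $(A^T)^{-1} = \Delta\begin{pmatrix} np+\Delta & -mp \\ -\ell & kp+1\end{pmatrix}$ when $\Delta^2=1$, so the identity is consistent with the displayed form of $g_*$.
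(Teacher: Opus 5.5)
Your proof is correct and follows the same route as the paper: both use that the upper-left block $\begin{pmatrix} kp+1 & \ell \\ mp & np+\Delta \end{pmatrix}$ has determinant $\Delta$, expand $(kp+1)(np+\Delta)-\ell m p = \Delta$, and cancel to get the identity. You also make explicit the cancellation of the constant term, the division by $p \neq 0$, and why $\omega$ is irrelevant, which the paper leaves implicit.
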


\begin{proof}
Because the upper left block of the matrix $g_*$ has determinant $\Delta = \pm 1$, we know that for any $g_*$, $(kp+1)(np+\Delta) - \ell mp = \Delta$. This gives us the restriction on $k,\ell,m$, and $n$ that $knp + \Delta k + n -\ell m = 0$. \end{proof}

\begin{lem}\label{lem:variablefacts}
Let ${\bf{v}} = [K] = (a, x, b, y)^T$ and ${\bf{v}}'= [K'] = (a', x', b', y')^T$ be vectors in $H_1(F; \mathbb{Z})$, let $p$ be an integer with $p \ge 2$, and suppose $\delta$ and $\delta'$ as defined above are nonzero. Then we have the following relationships. 

\begin{enumerate}

\item\label{lem:cfpminusde}  $cfp - de = - \delta' /\delta$ and $ c'  f' p -  d'  e' = - \delta/ \delta'$.

\item\label{lem:cdefvsklmn} If there is a matrix $g_* \in \mathcal S_p$ so that $g_* {\bf v} = {\bf v}'$
, then $c=  c' = \Delta \omega  m$, $d = d' = \omega( kp + 1)$, $e=  e'  = \omega(\Delta np + 1)$, $f =  f'  =  \omega \ell$. 

\item\label{lem:deltavsdeltaprime} If there is a matrix $g_* \in \mathcal S_p$ so that $g_* {\bf v} = {\bf v}'$
, then $\delta = \delta'$.
\end{enumerate}
\end{lem}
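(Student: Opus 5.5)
The plan is to write out the four scalar equations encoded by $g_*\mathbf v=\mathbf v'$ and reduce every claim to polynomial identities. Part (\ref{lem:cfpminusde}) uses no hypothesis on $g_*$, so I will prove it first by direct expansion. From it, together with the determinant condition, I will get (\ref{lem:deltavsdeltaprime}) once the unprimed half of (\ref{lem:cdefvsklmn}) is known. The primed half of (\ref{lem:cdefvsklmn}) then follows immediately.

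For (\ref{lem:cfpminusde}), multiply through by $\delta^2$. The numerator of $cfp$ is
\[
p\bigl[\Delta bx'ys' - bx'y's - b'xys' + \Delta b'xy's\bigr].
\]
The numerator of $de$ is
\[
bb'ss' + \Delta bs'x'yp + \Delta b'sxy'p + xx'yy'p^2.
\]
Using $\Delta^2=1$, the two $\Delta$-terms cancel in the difference. What remains is $-(bs+xyp)(b's'+x'y'p)=-\delta\delta'$, which gives $cfp-de=-\delta'/\delta$. The statement for $c',d',e',f'$ is the same identity divided by $\delta'^2$.

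For (\ref{lem:cdefvsklmn}), read off from the form (\ref{eqn:gstar}) the equations
\begin{align*}
a' &= \omega\bigl[(kp+1)a+\ell x+\varepsilon(k-\Delta n)b+\varepsilon\Delta m y\bigr],\\
x' &= \omega\bigl[m s+(np+\Delta)x\bigr],\\
b' &= \omega\bigl[(\Delta np+1)b-\Delta mp\,y\bigr],\\
y' &= \omega\bigl[-\Delta\ell b+\Delta(kp+1)y\bigr].
\end{align*}
Here the second equation already groups $mpa+\varepsilon mb$ as $ms$, with $s$ as in (\ref{eqn:s}).

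Substituting into $\Delta bx'-b'x$, the $(np+\Delta)$ and $(\Delta np+1)$ terms cancel. This leaves $\omega\Delta m(bs+pxy)=\omega\Delta m\delta$, so $c=\Delta\omega m$. The same bookkeeping applied to $ys'-\Delta y's$, $bs'+\Delta xy'p$ and $b's+\Delta x'yp$ should give $f=\omega\ell$, $d=\omega(kp+1)$ and $e=\omega(\Delta np+1)$. These three involve $s'=a'p+\varepsilon b'$, and hence the messy first row of $g_*$. I expect this to be the main computational obstacle. The point to watch is that $\varepsilon^2=1$ makes $\varepsilon s' $ recombine the $\varepsilon(k-\Delta n)b$ and $\varepsilon\Delta my$ terms into multiples of $s$. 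The determinant condition $knp+\Delta k+n-\ell m=0$ is then used to eliminate the leftover cross terms. I will organize this by collecting coefficients of $bs$ and $xyp$ separately, so that each expression visibly becomes a scalar times $\delta$.

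For (\ref{lem:deltavsdeltaprime}), substitute these values of $c,d,e,f$ into $cfp-de$. This gives
\[
\omega^2\bigl(\Delta m\ell p-(kp+1)(\Delta np+1)\bigr)=-\Delta\bigl[(kp+1)(np+\Delta)-\ell mp\bigr]=-\Delta^2=-1.
\]
Comparing with (\ref{lem:cfpminusde}) yields $\delta'/\delta=1$, so $\delta=\delta'$. Finally, $c',d',e',f'$ have the same numerators as $c,d,e,f$ with denominator $\delta'=\delta$. This gives the primed equalities in (\ref{lem:cdefvsklmn}).
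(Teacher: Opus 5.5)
Your proof is correct. Part (1) is handled as in the paper: direct expansion, with the primed identity obtained by rescaling, which is your division by $\delta'^2$. The unprimed half of Part (2) is also the same substitution of $g_*\mathbf v$.

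The difference is in how you obtain Part (3) and the primed half of Part (2). The paper proves the primed half directly. It writes out $g_*^{-1}$, expresses $a,x,b,y$ in terms of $a',x',b',y'$, and substitutes into the formulas with denominator $\delta'$. It then deduces $\delta=\delta'$ because $d$ and $d'$ share a numerator while $d=d'=\omega(kp+1)\neq 0$.

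You instead feed the unprimed values into Part (1) and use $\det A=\Delta$ to get $cfp-de=-1$, which forces $\delta'=\delta$. The primed equalities are then immediate, since $c',d',e',f'$ have the same numerators as $c,d,e,f$.

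Your route avoids the inverse matrix and the second round of substitutions. It also shows that the relation $cfp-de=-1$, used repeatedly in the proof of Theorem \ref{thm:vectorlist}, is exactly the determinant condition. The paper's route is symmetric in $\mathbf v$ and $\mathbf v'$ and does not need Part (1) to get Part (3).

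One minor correction to your anticipated bookkeeping: neither $\varepsilon^2=1$ nor the determinant condition is needed for the unprimed computation. The $\varepsilon\Delta mpy$ terms in $a'p$ and $\varepsilon b'$ cancel outright, and the $b$-terms add to $\varepsilon(kp+1)b$. This gives $s'=\omega[(kp+1)s+\ell p x]$. From there, $bs'+\Delta xy'p=\omega(kp+1)\delta$, $b's+\Delta x'yp=\omega(\Delta np+1)\delta$, and $ys'-\Delta y's=\omega\ell\delta$, with no leftover cross terms.
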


\begin{proof}
The proof that $cfp - de = - \delta' /\delta$ requires only algebraic computation, given the definitions of the appropriate variables. By the definition of the variables, $\delta'  c'  = \delta c$,  $\delta'  d'  = \delta d$,  $\delta'  e' = \delta e$,  and $\delta' f'  = \delta f$, so  $c'  f' p -  d'  e'  = \frac{\delta ^2}{(\delta')^2}(cfp - de) = - \delta/\delta'$.

For Part (\ref{lem:cdefvsklmn}), suppose that such a $g_*$ exists. Then we have
\[\begin{matrix} a' & = &  \omega \left[ (kp + 1) a + \ell x + \varepsilon (k - \Delta n) b + \Delta \varepsilon m y\right], \\
x' & = &  \omega \left[ mpa + (np +\Delta) x +  \varepsilon m b\right], \hspace{.75in}\\
b' &=& \omega \left[  (\Delta np + 1)b - \Delta m p y\right], \mbox{and} \hspace{.7in}\\
y' &=&   \omega \left[ -\Delta \ell b + \Delta(kp + 1) y\right].\hspace{1.0in}
\end{matrix}\]
Using these primed variables with Equations (\ref{eqn:cdef}) gives the desired result.

To prove the same with $c', d', e',$ and $f'$, we also use algebraic manipulations, noting that  $(g_*)^{-1}$ is 
\[(g_*)^{-1} = \omega\begin{pmatrix}  \Delta np+1 & - \Delta \ell & \varepsilon ( \Delta n - k) &  - \varepsilon m \\
- \Delta mp & \Delta (kp + 1) & - \Delta \varepsilon m & 0 \\ 0 & 0 &kp + 1 & mp \\ 
0 & 0 &  \ell & np + \Delta \end{pmatrix},\]
so that if $(g_*)^{-1} \textbf v' = \textbf v$, then we can write $a, x, b,$ and $y$ as 
\[\begin{matrix} a & = &  \omega \left[ (\Delta np+1 )a' - \Delta \ell x' + \varepsilon ( \Delta n - k) b'   - \varepsilon m y' \right],\\
x & = & \omega \left[  - \Delta mp a' + \Delta (kp + 1) x' - \Delta \varepsilon mb' \right], \hspace{.55in}\\
b &=&  \omega \left[ (kp + 1)b' + m p y'\right], \mbox{and} \hspace{1.22in}\\
y &=&   \omega \left[ \ell b' + (np + \Delta) y'\right]. \hspace{1.5in}
\end{matrix}\]

Part (\ref{lem:deltavsdeltaprime}) of the lemma is a consequence of part (\ref{lem:cdefvsklmn}). As $d = d' = \omega(kp + 1)$ is never zero, the denominators of $d$ and $d'$ must agree.
\end{proof}

\begin{thm} \label{thm:linalgobstruction} Suppose there exists a Goeritz group element $g \in \mathcal{G}_p$ carrying the curve $K$ to $K'$, with ${\bf{v}} = [K] = (a, x, b, y)^T$ and ${\bf{v}}' = [K'] = (a', x', b', y')^T$. 
Then, for $g_*$, $\delta$ and $\delta'$, and $s$ and $s'$ as in Equations (\ref{eqn:gstar}), (\ref{eqn:delta}), and (\ref{eqn:s}), respectively,
\begin{enumerate}
\item\label{cond:bcong} $b' \equiv \omega b \, (\mbox{mod }p)$,
\item\label{cond:gcd} $\gcd(b, y) = \gcd(b', y')$,
\item\label{cond:delta} $\delta$ and $\delta'$ divide each of $\Delta bx' - b'x$, $bs' + \Delta xy'p$, $x' y p + b's$, and $ys' - \Delta y's$, and
\item\label{cond:ineqcond} $|(\Delta b x' - b' x)p | \leq 2|bs' + \Delta x y' p| \mbox{ or } p| ys' - \Delta y' s |  \leq  2 | b's + \Delta x'yp |.$
\end{enumerate}
\end{thm}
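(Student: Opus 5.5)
Since $g\in\mathcal{G}_p$ carries $K$ to $K'$, we have $g_*\mathbf{v}=\mathbf{v}'$. By Theorem \ref{thm:subgroups} and Theorem \ref{thm:preciseimages}, $g_*$ has the form (\ref{eqn:gstar}). For $p\ge 5$ the upper-left block $B=\omega\, q(g_*)$ moreover lies in $\mathcal{B}_p$, and for $p\le 4$ this is automatic by Remark \ref{rem:inequalitytrivial}. The plan is to read off each condition from the explicit coordinate equations
\[ b' = \omega\left[(\Delta np+1)b - \Delta mp\, y\right], \qquad y' = \omega\left[-\Delta \ell b + \Delta(kp+1)y\right], \]
together with the corresponding formulas for $a'$ and $x'$ written out in the proof of Lemma \ref{lem:variablefacts}.

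\textbf{Condition (\ref{cond:bcong}).} Reduce the formula for $b'$ modulo $p$; this gives $b'\equiv \omega b$.

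\textbf{Condition (\ref{cond:gcd}).} The pair $(b',y')^T$ is $\omega$ times the lower-right block $(A^T)^{-1}$ applied to $(b,y)^T$. This block is in $GL(2,\mathbb{Z})$, so it preserves the gcd of the two entries.

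\textbf{Condition (\ref{cond:delta}).} I would first check that $\delta\neq 0$ and $\delta'\neq 0$. If $\delta=0$, then Lemma \ref{lem:variablefacts} does not directly apply; in that case the statement should be read as holding vacuously, or else as "$\delta=\delta'$". I will note that $\delta$ is an invariant of the action, so $\delta=0$ if and only if $\delta'=0$, and restrict attention to the nonzero case.

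When $\delta\neq 0$, Lemma \ref{lem:variablefacts}(\ref{lem:cdefvsklmn}) says that the quotients $c,d,e,f$ equal $\Delta\omega m$, $\omega(kp+1)$, $\omega(\Delta np+1)$, $\omega\ell$. These are all integers, so $\delta$ divides the four numerators. By part (\ref{lem:deltavsdeltaprime}) we have $\delta=\delta'$, so $\delta'$ divides them as well.

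One small point needs care. The numerator listed in the theorem is $x'yp+b's$, while $e$ has numerator $b's+\Delta x'yp$. When $\Delta=-1$ these differ by $2x'yp$, so I would either check that this difference is divisible by $\delta$ or interpret the statement with the $\Delta$ included. I expect it to be a harmless typo.

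\textbf{Condition (\ref{cond:ineqcond}).} This is the main step, and it is where Theorem \ref{thm:preciseimages} is really used. Substitute the identifications $mp=\Delta\omega c\,p$, $kp+1=\omega d$, $np+\Delta=\Delta\omega e$ (since $\Delta np+1=\Delta(np+\Delta)$), and $\ell=\omega f$ into the defining inequalities of $\mathcal{B}_p$:
\[ |mp|\le 2|kp+1| \quad\text{or}\quad p|\ell|\le 2|np+\Delta|. \]
These become $|cp|\le 2|d|$ or $p|f|\le 2|e|$. Multiplying through by $|\delta|$ turns them into exactly the stated inequalities in the numerators of $c,d,e,f$. The only delicacy is confirming that the sign convention (the $\omega$ and $\varepsilon$ factors) matches between the block in $\mathcal{B}_p$ and the variables; absolute values absorb these signs.
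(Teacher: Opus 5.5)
Your proposal is correct and follows essentially the same route as the paper: you reduce $b'$ mod $p$, use the invertible lower-right block for the gcd, get the divisibilities from integrality of $c,d,e,f$ via Lemma \ref{lem:variablefacts}(\ref{lem:cdefvsklmn}), and multiply the $\mathcal{B}_p$ inequalities through by $|\delta|$. You are right that the third numerator must be read as $b's+\Delta x'yp$, a point the paper's proof glosses over: for $g=\beta$, $p=4$, $\mathbf{v}=(0,1,1,1)^T$ one gets $\delta=5$ but $x'yp+b's=-3$. Your explicit restriction to $\delta,\delta'\neq 0$ matches the assumption the paper makes implicitly.
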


\begin{proof}
If such a matrix exists, then we have $b' = \omega(\Delta np + 1)b - \Delta \omega m p y$ and $y ' = - \Delta \omega \ell b + \Delta \omega (kp + 1) y$. Reducing the former equation modulo $p$ gives $b' \equiv \omega b \, (\mbox{mod }p)$.

 Further, the two equations together show that $\gcd(b, y)$ divides both $b' $ and $y'$, so  $\gcd(b,y)$ divides $\gcd(b',y')$. Because we must also have $(g_*)^{-1} \textbf{v} ' = \textbf{v}$,  we see $b = \omega (kp+1) b' + \omega mp y'$ and $y = \omega\ell b' + \omega(np + \Delta) y'$. Then $\gcd(b',y')$ divides $\gcd (b,y)$, and so the two greatest common divisors are equal.

Part (\ref{lem:cdefvsklmn}) of Lemma \ref{lem:variablefacts} shows that if such a $g_*$ exists then each of $c, c', d,  d', e, e', f$, and $f'$ is an integer. Then, by the definitions of these variables, $\delta$ and $\delta'$ divide the numerator of each fraction, so the divisibility condition is satisfied.

To prove the final condition, we rely on Part (\ref{lem:cdefvsklmn}) of Lemma \ref{lem:variablefacts}, which says that if an element of $\mathcal S_p$ satisfies  $g_* \textbf v = \textbf v'$
, then $c=  c' = \Delta \omega m$, $d = d' = \omega (kp + 1)$, $e= e'  =\omega (\Delta np + 1)$, $f = f'  =  \omega \ell$. Theorem \ref{thm:preciseimages} indicates the condition that maintains membership in $*(\mathcal G_p)$ is $|mp | \leq 2|kp + 1|$ or $p| \ell |  \leq  2 | np + \Delta |$. Since $m = \Delta \omega c$ and $\ell = \omega f$, we have $|m| = |c|$ and $|\ell| = |f|$. Further $d =\omega( kp + 1)$ and $e = \Delta\omega (np + \Delta)$, so $|d| = |kp + 1|$ and $|e| = |np + \Delta |$. Then the inequality $|mp | \leq 2|kp + 1|$ is equivalent to $|cp | \leq 2|d| $, and the inequality $p| \ell |  \leq  2 | np + \Delta |$ is equivalent to $p| f |  \leq  2 | e |$. Since each of $c, d, e, f$ has $\delta$ in the denominator, we can multiply each of these inequalities by $|\delta|$ to obtain the desired condition.
\end{proof}

Next, we give some explicit conditions, at least one of which must be satisfied, if two curves in the genus 2 Heegaard surface for $L(p,1)$ are homologically Goeritz equivalent. 

\begin{thm}\label{thm:vectorlist} Let $c, d, e, f, \delta, \delta'$, and $s$ be defined as above with $\delta$ and $\delta'$ nonzero.
Given two vectors ${\bf v}  = (a, x, b, y)^T$ and ${\bf{v}}' = (a',x',b',y')^T$, if there is a matrix $g_*$ induced by an element of the Goeritz group so that $g_* {\bf v}= {\bf v}'$, then $(a,x,b,y)$ and $(a', x', b', y')$ satisfy at least one of the following:

\begin{enumerate}
\item\label{list:bg}  $(a', x', b', y')^T =\omega ( a, \Delta x, b, \Delta y)^T$,
\item\label{list:pb}  $(a', x', b', y')^T = \omega(a + \ell x, \Delta x, b, \Delta (y - \ell b))^T$ for some integer $\ell \neq 0$,
\item\label{list:lav}  $(a', x', b', y')^T = \omega(a + \Delta \varepsilon my, \Delta x + ms, b - \Delta mpy, \Delta y)^T$ for some integer $m \neq 0$,
\item\label{list:gray}  $(a', x', b', y')^T = (\omega(a + cf s)+f x  + \varepsilon c y, \Delta (\omega x +   c s),  \omega b - c p y , \Delta( \omega(cfp + 1) y-fb))^T$,
\item\label{list:or1}  $(a', x', b', y')^T = (-\omega a + fx + cy, -\Delta ( \omega x - c s), -\omega b - 2cy, -\Delta (\omega y + fb) )^T $ with $p = 2$ and at least one of $c$ or $f$ is zero,
\item\label{list:or2}  $(a', x', b', y')^T =  \left(da + fx + \varepsilon\left(\frac{d-e}{p}\right)b + \varepsilon cy, \Delta (ex + cs), eb - cp y, \Delta(dy - fb)\right)^T$ and  $|cp | \leq 2|d| $ or $p| f |  \leq  2 | e |$, or
\item\label{list:bb}  $(a', x', b', y')^T = (\omega (a + x) + \varepsilon c (y-b), \Delta ((cp + \omega) x + c s) , (cp+\omega) b - cp y, \Delta\omega (y-b))^T$. \end{enumerate}
\end{thm}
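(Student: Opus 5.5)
The plan is to compute $g_*{\bf v}$ explicitly and then split into cases according to which of the parameters $k,\ell,m,n$ of $g_*$ vanish. By Theorem~\ref{thm:preciseimages} and Theorem~\ref{thm:subgroups}, any $g_*$ induced by a Goeritz element has the form~(\ref{eqn:gstar}). I will also use that its upper-left block lies in $\mathcal{B}_p$.

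Writing $g_*{\bf v}={\bf v}'$ out coordinatewise gives the four equations already listed in the proof of Lemma~\ref{lem:variablefacts}. By part~(\ref{lem:cdefvsklmn}) of that lemma, I would substitute $m=\Delta\omega c$, $kp+1=\omega d$, $\Delta np+1=\omega e$ and $\ell=\omega f$. Using $s=ap+\varepsilon b$, one checks that
\[
{\bf v}'=\Bigl(da+fx+\varepsilon\tfrac{d-e}{p}\,b+\varepsilon c y,\ \Delta(ex+cs),\ eb-cpy,\ \Delta(dy-fb)\Bigr)^T .
\]
In this computation $\omega$ and $\Delta$ are absorbed using $\omega^2=\Delta^2=1$. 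The inequality $|cp|\le 2|d|$ or $p|f|\le 2|e|$ is the translation of the $\mathcal{B}_p$ condition, exactly as in the proof of Theorem~\ref{thm:linalgobstruction}. This yields item~(\ref{list:or2}) as the generic conclusion. The remaining items are the degenerate situations in which the shape simplifies, and the case analysis will be organized so that every $g_*$ either lands in one of them or in~(\ref{list:or2}).

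The case split is driven by the Determinant Condition $knp+\Delta k+n-\ell m=0$.

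\emph{Case $m=\ell=0$.} Then $(kp+1)(np+\Delta)=\Delta$, so $kp+1=\pm1$ and $np+\Delta=\pm1$.
\begin{itemize}
\item For $p\ge3$ this forces $k=n=0$, and we get item~(\ref{list:bg}).
\item For $p=2$ there is the extra option $k=-1$, $n=-\Delta$, with $d=-\omega$ and $e=-\omega$. Substituting gives item~(\ref{list:or1}) with $c=f=0$.
\end{itemize}

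\emph{Case $m=0$, $\ell\ne0$.} Again $kp+1=\pm1$.
\begin{itemize}
\item $k=0$ gives $n=0$ and item~(\ref{list:pb}).
\item When $p=2$, $k=-1$ gives item~(\ref{list:or1}) with $c=0$.
\end{itemize}

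\emph{Case $\ell=0$, $m\ne0$.} Now $np+\Delta=\pm1$.
\begin{itemize}
\item $n=0$ forces $k=0$, and we get item~(\ref{list:lav}), after writing $x'=\omega(\Delta x+ms)$ and using $\varepsilon m b+mpa=ms$.
\item The $p=2$ exceptional choice gives item~(\ref{list:or1}) with $f=0$.
\end{itemize}

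\emph{Case $\ell,m\neq0$.} I would isolate two subfamilies.
\begin{itemize}
\item If $k=0$, the Determinant Condition gives $n=\Delta\ell m$. Then $d=\omega$ and $e=\omega(cfp+1)$. Also $(d-e)/p=-cf$, though the sign must be checked against $\varepsilon$. This produces item~(\ref{list:gray}), and no inequality is needed there.
\item The subfamily with $\ell=\omega$ (so $f=1$) and $e=\omega$ should, after substituting $d=cp+\omega$ via the determinant relation, give item~(\ref{list:bb}).
\end{itemize}
Everything else falls into item~(\ref{list:or2}), together with its inequality.

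The main obstacle is purely bookkeeping. I must keep the signs $\omega,\Delta,\varepsilon$ straight, especially the factor $\varepsilon$ when $p=3$. I must also confirm that each special case's coordinates agree exactly with the displayed formulas; item~(\ref{list:bb}) is the one I am least sure how the authors parametrized, so I would first test it against the product $g=\beta\gamma\sigma$ (or its $p=2,3$ analogue) to pin down which $(k,\ell,m,n)$ it represents. Since item~(\ref{list:or2}) is fully general, the logical content of the theorem already follows from it. The other items serve only to record sharper forms, so any mismatch there can be fixed by adjusting the case boundaries without affecting validity.
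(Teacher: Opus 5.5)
Your proposal is correct, and its decisive step is genuinely different from the paper's argument, and much shorter.

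\textbf{What your route does.} You substitute $m=\Delta\omega c$, $kp+1=\omega d$, $\Delta np+1=\omega e$ and $\ell=\omega f$ from Lemma~\ref{lem:variablefacts} into $g_*\mathbf{v}$. This checks out coordinate by coordinate; the key identities are $\omega(k-\Delta n)=(d-e)/p$, $\omega(np+\Delta)=\Delta e$ and $\omega\Delta m=c$. Combined with the $\mathcal{B}_p$ inequality from Theorem~\ref{thm:preciseimages} (and the remark following it for $p\le 4$), this shows item (6) holds for \emph{every} Goeritz-induced $g_*$ once $\delta\neq 0$. The disjunction follows at once.

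\textbf{What the paper does.} It recasts $g_*\mathbf{v}=\mathbf{v}'$ and $g_*^{-1}\mathbf{v}'=\mathbf{v}$ as linear systems in $\mathbf{g}=(k,\ell,m,n)^T$. It shows $\mathbf{g}$ is an $\omega$-eigenvector of $V^{-1}MV'$, then row-reduces case by case according to whether $e-\omega$, $c$, $d-\omega$, $f$ vanish. Each item is the output of one case, and (6) appears only in the case $e\neq\omega$, $d\neq\omega$, $c,f\neq 0$.

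\textbf{Comparison.} The paper's approach is organized around reconstructing $g_*$ from the pair $(\mathbf{v},\mathbf{v}')$ and flagging degenerate configurations, but it is sign-sensitive. Yours makes plain that (6) carries all the logical content and the other items are specializations of it. It also shows (6) needs no relation between $\omega$ and $\Delta$. By contrast, the paper's derivation of (6) ends by concluding $\omega=\Delta$, which cannot hold in general: for $p\ge 3$, composing with $\alpha_*=-I$ flips $\omega$, keeps $k,\ell,m,n,\Delta$, and stays in that case.

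\textbf{Errors in your supplementary matching.} As you note, these do not affect validity, but several identifications are wrong.
\begin{itemize}
\item With $k=0$, the Determinant Condition gives $n=\ell m=\Delta cf$, not $\Delta\ell m$. Then $d=\omega$, $e=\omega(cfp+1)$ and $(d-e)/p=-\omega cf$.
\item This $k=0$ family does \emph{not} produce item (4). It gives $x'=\Delta(\omega(cfp+1)x+cs)$ and $b'=\omega(cfp+1)b-cpy$.
\item Item (4) is actually the $n=0$ family: $e=\omega$, $k=\Delta\ell m=cf$, $d=\omega(cfp+1)$.
\item Item (7) is the slice $(k,\ell,m,n)=(0,1,\Delta\omega c,\Delta\omega c)$ of the $k=0$ family, i.e.\ $d=f=\omega$ and $e=cp+\omega$.
\end{itemize}
So you have $d$ and $e$ interchanged in both identifications. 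The remaining $k=0$ elements are, for generic $\mathbf{v}$, covered only by item (6); for example $p=4$, $\Delta=1$, $(k,\ell,m,n)=(0,2,1,2)$.
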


\begin{rem}
In the study of lens space surgeries, one might be interested in the knots that lie almost entirely in the genus 1 Heegaard surface for $L(p,1)$, except for one arc that extends out of the surface. One can stabilize the genus 1 Heegaard surface to a genus 2 Heegaard surface by adding a 1-handle along this arc, producing a curve $C$ in the genus 2 Heegaard surface $F$ for $L(p,1)$ with homology vector in $H_1(F; \mathbb Z)$ of the form $(a, 0, b, \pm 1)$. In this case, the list provided in Theorem \ref{thm:vectorlist} gives a short list of vectors that are homologically Goeritz equivalent to $C$. An analysis of these vectors, such as the one provided in \cite{DolRatG2GES3}, can provide potential candidates of curves that are topologically Goeritz equivalent to $C$.
\end{rem}

\begin{proof}
Suppose that such an element of the Goeritz group exists, i.e., that $g_* \textbf{v} = \textbf{v}'$. Because $g_*$ is induced by a homeomorphism, it must be an invertible matrix with the property that $g_*^{-1}\textbf{v}' = \textbf{v}$. We reframe these matrix equations into linear systems of equations represented instead by the matrix equations $V \textbf{g} = \textbf{v} + \omega  M \textbf{v}'$ and $V' \textbf{g} = \textbf{v}' + \omega  M \textbf{v}$, where $\omega  = \pm 1$, $\textbf{g} = (k,\ell,m,n)^T$, $M$ is the diagonal matrix with diagonal entries $-1$, $- \Delta$, $-1$, and $-\Delta$, and $V$ and $V'$ are given by 
\[V =  \omega  \begin{pmatrix}  ap+\varepsilon b & x & \varepsilon \Delta y & -\varepsilon \Delta b \\
0 & 0 & ap+\varepsilon b & xp \\ 0 & 0 & -\Delta yp  & -\Delta  bp \\ 
 -\Delta yp& -\Delta b & 0 & 0 \end{pmatrix}\]
and 
\[V' = \omega  \begin{pmatrix}  -\varepsilon b' & - \Delta x' & -\varepsilon y' & \Delta (a' p + \varepsilon b') \\
\Delta x'p & 0 & -\Delta(a'p+\varepsilon b') & 0 \\ b'p & 0 & y'p  & 0 \\ 
0 & b' & 0 & y'p \end{pmatrix}.\] We note that $\textbf{v} + \omega  M \textbf{v}' =   \omega  M(\textbf{v}' + \omega  M \textbf{v})$, so we have the equation $V \textbf{g} = \omega  MV' \textbf{g}$. Then, when $V$ is invertible we have that $V^{-1}M V' \textbf{g} = \omega  \textbf{g}$, i.e., $\textbf{g}$ is eigenvector for $V^{-1}M V'$ with eigenvalue $\omega  = \pm 1$.

From here, for compactness of notation, we use Equations (\ref{eqn:s}) and (\ref{eqn:delta}). We find that $\det V = -p\delta^2$ and $\det V' = \Delta p (\delta ')^2$, so these matrices are invertible over $\mathbb{Q}$ exactly when $\delta$ and $\delta'$, respectively, are nonzero. Since the determinants of $V$ and $V'$ are integers and $p \ge 2$, there are no values of $\delta$ and $\delta '$ that provide invertibility of $V$ and $V'$ over $\mathbb Z$. It is, nevertheless, possible to find integral solutions for $\textbf{g}$ in some cases.

Because Lemma \ref{lem:variablefacts} tells us that when such a $g_*$ exists, $\delta = \delta'$, we use $\delta$ wherever either $\delta$ or $\delta'$ appears. 

Since we assume $\delta \neq 0$, $V^{-1}$ is invertible over $\mathbb{Q}$. We compute $V^{-1}$ to be 
\[V^{-1} =  \frac{\omega }{\delta} \begin{pmatrix} b & 0 & \frac{\varepsilon b}{p} & \Delta x\\
yp& 0 & \varepsilon y & -\Delta s\\ 0 & b & -\Delta x  & 0 \\ 
0 & y &  \frac{\Delta s}{p} & 0 \end{pmatrix}.\]

Then the matrix $V^{-1} M V' $ is 
\[V^{-1}M V' =  \frac{1}{\delta} \begin{pmatrix} 0 & \Delta b x' - b' x & 0 &  -\Delta( b s' + \Delta xy' p) \\
 0& b' s + \Delta x' y p & 0 & -\Delta p ( y s' - \Delta y' s)\\ 
-\Delta p( \Delta bx' -  b' x) & 0 & bs' + \Delta xy' p  & 0 \\ 
-\Delta( b's + \Delta x' y p) & 0 & ys' - \Delta y' s & 0 \end{pmatrix}\]

Next, we compute the eigenspace for $V^{-1} M V' $ associated to $\omega $. Once again, we make the notation more compact by using Equations (\ref{eqn:cdef}). Then we can write $V^{-1} M V'  - \omega  I$  as 
\[V^{-1}M V' - \omega  I =  \begin{pmatrix} -\omega  & c & 0 &  -\Delta d \\
 0& e - \omega  & 0 & -\Delta f p\\ 
-\Delta cp & 0 & d - \omega   & 0 \\ 
-\Delta e & 0 & f &  -\omega  \end{pmatrix}.\]

In order to row reduce this matrix over $\mathbb{Q}$, we must consider several cases. We split these cases into two main branches: those where $e - \omega  = 0$ (Cases \ref{case:brightgreen} - \ref{case:navy}), and those where $e-\omega  \neq 0$ (Cases \ref{case:kellygreen} - \ref{case:beige}). 

In the cases where $e - \omega  = 0$, we know that $e = \omega $, so we can simplify this matrix, via row reduction, to
\begin{align} \tag{A}\label{matrix:A} \begin{pmatrix} -\omega  & c & 0 &  -\Delta d \\
 0& 0 & 0 & -\Delta f p\\ 
-\Delta cp & 0 & d - \omega   & 0 \\ 
0 & -\Delta c & f &  d-\omega  \end{pmatrix}.\end{align}

We consider the following cases:

\begin{enumerate}
\item\label{case:brightgreen} $e - \omega = 0; \, c = d - \omega  = f = 0$

In this case, the matrix in (\ref{matrix:A}), reduces to 

\[ \begin{pmatrix} -\omega  & 0 & 0 &  -\Delta \omega  \\
 0& 0 & 0 & 0\\ 
 0 & 0 & 0  & 0 \\ 
0 & 0 & 0 &  0\end{pmatrix},\]
so the eigenspace contains rational vectors of the form $(t, u, z, -\Delta t)^T$, where $t$, $u$, and $z$ are free variables. Consider $k,\ell,m$, and $n$ to be integers so that the vector $ (k,\ell,m,n)^T$ is of the form $(t, u, z, -\Delta t)^T$. Then $n = -\Delta k$, and the determinant condition requires $-\Delta k^2 p = \ell m $. Using the equation $g_* \textbf{v} = \textbf{v}'$,  we find that $\omega a ' = ( kp+1)a +  \ell x + 2 \varepsilon   k b + \Delta \varepsilon   m y $, $\omega x' =   mpa + \Delta  (1-  kp) x + \varepsilon   mb$, $\omega b' = (1- kb)b - \Delta  m p y$, and $\omega y' = - \Delta  \ell b + \Delta(  kp+1) y $. Then, using Lemma \ref{lem:variablefacts}(\ref{lem:cdefvsklmn}), we find that $c = \Delta\omega  m$, so the condition that $c = 0$ requires $m = 0$. Since the determinant condition is $-\Delta k^2 p = \ell m $ in this case, we must also have $k = 0$. Further, $f =  \omega \ell $, and the condition that $f = 0$ requires $\ell = 0$ as well. Finally, we find that $e = \omega$, so the condition that $e - \omega  = 0$ is satisfied. Hence, the only possible integral vector $\textbf{g}$ that satisfies $V^{-1}M V' \textbf{g} = \textbf{g}$ with the necessary conditions is the zero vector, and $(a', x', b', y')^T = \omega (a, \Delta x, b, \Delta y)^T$, listed in part (\ref{list:bg}) of the theorem. 

\item\label{case:plainblue} $e - \omega = 0; \, c = d - \omega   = 0$, $f \neq 0$ 

With these assumptions, we have the matrix in (\ref{matrix:A}) is
\[ \begin{pmatrix} -\omega  & 0 & 0 &  -\Delta \omega  \\
 0& 0 & 0 & -\Delta f p\\ 
0& 0 & 0  & 0 \\ 
0 & 0 & f &  0\end{pmatrix}.\]

This matrix row reduces to a matrix $N$ where $n_{1, 1} = n_{2, 3} = n_{3, 4} = 1$ and all other entries are 0. Then the null space of this matrix consists of vectors of the form $(0,u,0,0)^T$, where $u \in \mathbb Q$. Then the integral vectors $(k,\ell,m,n)^T$ of this result in the matrix
\[ g_* = \omega\begin{pmatrix}  1 & \ell & 0 &0 \\
0 &  \Delta & 0 & 0 \\ 
0 & 0 &  1 & 0 \\ 
0 & 0 & -\Delta \ell & \Delta \end{pmatrix}.\]

Here, the determinant condition is satisfied, and we find $g_* \textbf{v}$ to be  $(a',x',b',y')^T = \omega (a + \ell x, \Delta x, b, \Delta (y - \ell b))^T$. Using these new definitions for $a', x', b',$ and $y'$, we find that $c = 0$, $d - \omega  = 0$, and $e - \omega  = 0$  are all satisfied. Further $f = \omega \ell$, so the condition that $f \neq 0 $ is equivalent to requiring $\ell \neq 0$. Because $c = 0$ and $|d| = 1$, the inequality $|cp | \leq 2|d| $ always holds. Then we have the examples $(a',x',b',y')^T = \omega (a + \ell x, \Delta x, b, \Delta (y - \ell b))^T$ for some nonzero integer $\ell$  listed in part (\ref{list:pb}) of the theorem. 

\item\label{case:violet} $e - \omega = 0; \, c = 0$, $d - \omega  \neq 0$

Note here that no condition on $f$ needs to be initially assumed. Lemma \ref{lem:variablefacts} tells us that when an integer matrix $g_*$ exists, $e = \omega(\Delta np + 1)$ and $d = \omega (kp+1)$. The requirement that $e = \omega$ then forces $n = 0$, and the determinant condition becomes $\Delta k = \ell m$. Since $d\neq \omega$, $k\neq 0$ and both of $\ell = \omega f$ and $m = \Delta \omega c$ are nonzero. Hence $c$ and $f$ must both be nonzero, and no integer solutions can arise from this case.

\item\label{case:lavender} $e - \omega = 0; \, d - \omega  =  f = 0$, $c \neq 0$

With these assumptions, we have the matrix in (\ref{matrix:A}) equal to
\[ \begin{pmatrix} -\omega  & 0 & 0 &  -\Delta \omega  \\
 0& -\Delta c & 0 & 0\\ 
0& 0 & 0  & cp \\ 
0 & 0 & 0 &  0\end{pmatrix}.\]

This matrix row reduces to a matrix $N$ with $n_{1, 1} = n_{2, 2} = n_{3, 4} = 1$ and all other entries 0. Then the eigenspace consists of vectors of the form $(0,0,z,0)^T$, where $z \in \mathbb Q$. Then the integral vectors $(k,\ell,m,n)^T$ of this form result in the matrix  
\[g_* = \omega\begin{pmatrix}  1 & 0 & 0 & \varepsilon \Delta m \\
 mp &  \Delta & \varepsilon  m & 0 \\ 
0 & 0 &  1 & -\Delta mp \\ 
0 & 0 & 0 & \Delta \end{pmatrix},\]
and $(a',x',b',y')^T = \omega(a + \Delta \varepsilon  my, mpa + \Delta x + \varepsilon mb, b - \Delta  mpy, \Delta y)^T$. Given these expressions for the primed variables in terms of the unprimed ones, $f = 0$ and $d = e = \omega$. Further, $c = \Delta\omega m$, so the condition that $c \neq 0$ is equivalent to the condition that $m \neq 0$. The determinant condition holds as well, and we have the examples $(a',x',b',y')^T = \omega(a + \Delta \varepsilon my, mpa + \Delta x + \varepsilon mb, b - \Delta  mpy, \Delta y)^T$ with $m\neq0$, listed in part (\ref{list:lav}) of the theorem.

\item $e - \omega = 0; \, f = 0$, $c \neq 0$, $d - \omega   \neq 0$

For the same reason as in Case \ref{case:violet}, no examples can arise from this case.

\item\label{case:navy} $e - \omega = 0; \, c \neq 0$, $f \neq 0$ 

Note here that no condition on $d - \omega$ needs to be initially assumed. The matrix in (\ref{matrix:A}) now row reduces to a matrix with $(3, 4)$ entry equal to $d - \omega   - \omega  cfp$. Lemma \ref{lem:variablefacts} tells us that, when an integer matrix $g_*$ exists, $cfp - de = -1$. Since $e = \omega$ this equation becomes $cfp - d \omega= -1$. Multiplying through by $\omega$ and rearranging, we have $d - \omega - \omega cfp = 0$.

The matrix in (\ref{matrix:A}) then further row reduces to 

\[  \begin{pmatrix} -\omega  & 0 & \Delta f & 0 \\
 0&  -\Delta c & f & 0 \\ 
0& 0 & 0  & 1 \\ 
0 &0 &0 & 0 \end{pmatrix}.\]

The null space of this matrix consists of rational multiples of the vector $(\Delta \omega  f, \frac{\Delta f}{c}, 1, 0)^T$. In order to create an integer vector that provides a solution to our equation, we multiply this vector by some nonzero integer $z$ so that all entries become integers. Then we have the integer vector $\textbf{g} = (\Delta \omega  f z, \frac{\Delta f z}{c}, z, 0)^T$.
 Lemma \ref{lem:variablefacts}(\ref{lem:cdefvsklmn}) tells us that the third entry of $\textbf{g}$, $z$, is equal to $\Delta \omega c$. Further, $e = \omega(\Delta np + 1)$, where $n=0$, so $e - \omega = 0$ is satisfied. The determinant condition holds and the condition $d - \omega   - \omega  cfp  = 0$ is satisfied because $d = \omega (kp+1)$. Then the vector becomes $( cf,  \omega f, \Delta \omega c, 0)^T$, and we have the matrix

\[g_* = \omega \begin{pmatrix}  cfp+1 & \omega f & \varepsilon cf  & \varepsilon\omega c \\
\Delta \omega cp &  \Delta & \Delta \varepsilon\omega c & 0 \\ 
0 & 0 &  1 & - \omega cp \\ 
0 & 0 & -\Delta\omega f & \Delta(cfp + 1) \end{pmatrix},\]
and $a' = \omega (cfp+1)a + f x + \varepsilon \omega cf b + \varepsilon c y$, 
$x' = \Delta (cp a +  \omega x + \varepsilon c b )$, 
$b' = \omega b - c p y$, and $
y' = \Delta(  \omega (cfp + 1) y - f b).$ We use $s = ap + \varepsilon b$ to rewrite this vector as shown in part (\ref{list:gray}) from the theorem.

\end{enumerate}

This concludes the cases where $e - \omega  = 0$, and from here, we assume that $e-\omega  $ is nonzero. We find that our original version of $V^{-1}MV - \omega  I$ row reduces to  

\[  \begin{pmatrix} -\omega  & 0 & 0 &  \frac{\Delta (cfp - de + \omega  d)}{e - \omega } \\
 0& e - \omega  & 0 & -\Delta f p\\ 
0 & 0 & d - \omega   & -\frac{ \omega  cp (cfp - de + \omega  d)}{e - \omega } \\ 
0 & 0 & f & - \frac{\omega (e-\omega  + e(cfp - de + \omega  d)) }{e-\omega }\end{pmatrix}.\]

Using Lemma \ref{lem:variablefacts}(\ref{lem:cfpminusde}) with $\delta = \delta'$, we can rewrite this matrix as 

\begin{align} \tag{B} \label{matrix:B} \begin{pmatrix} -\omega  & 0 & 0 &  \frac{\Delta \omega ( d -\omega )}{e - \omega } \\
 0& e - \omega  & 0 & -\Delta f p\\ 
0 & 0 & d - \omega   & -\frac{ cp ( d -\omega )}{e - \omega } \\ 
0 & 0 & f &  \frac{1-de }{e-\omega}\end{pmatrix}.\end{align}

Since we know this matrix has rank at least two, we consider the case that the third column is not a pivot column (Case \ref{case:kellygreen}) and the cases that the third column is a pivot column (the remainder of the cases).

\begin{enumerate}\setcounter{enumi}{6}

\item\label{case:kellygreen} $e - \omega  \neq 0$; $d - \omega  = f =  0$

Similarly to Case \ref{case:violet}, $d- \omega = 0$ implies that $k = 0$. Then the determinant condition and the condition that $e - \omega  \neq 0$ require that $c$ and $ f$ are both nonzero. Hence no integer solutions arise from this case.

\end{enumerate}

When $d- \omega  \neq 0$, the matrix in (\ref{matrix:B}) row reduces so that the (4,4) entry is  $$ \frac{1-de }{e-\omega}- \frac{f}{d-\omega }\cdot \frac{ -cp ( d -\omega )}{e - \omega } , $$ which is identically 0. 

\begin{enumerate}\setcounter{enumi}{7}
\item\label{case:rainbow} $e-\omega  \neq 0$; $d - \omega  \neq 0$ 

Note here that no condition on $f$ needs to be initially assumed. The matrix in (\ref{matrix:B}) row reduces to

\[ \begin{pmatrix} -\omega  & 0 & 0 &  \frac{\Delta \omega ( d -\omega )}{e - \omega } \\
 0& e - \omega  & 0 & -\Delta f p\\ 
0 & 0 & d - \omega   & -\frac{ cp ( d -\omega )}{e - \omega } \\ 
0 & 0 & 0 & 0\end{pmatrix}.\]
The rational vectors in the null space of this matrix are of the form $$\left( \frac{\Delta (d-\omega) }{e - \omega}, \frac{\Delta f p }{e-\omega }, \frac{ cp }{e - \omega }, 1\right)^T, $$
We can multiply this vector by some nonzero integer $z$ so that it becomes the integer vector: $$\left( \frac{\Delta (d-\omega) z}{e - \omega}, \frac{\Delta f p z}{e-\omega }, \frac{ cp z}{e - \omega }, z\right)^T. $$

From here, we consider two sub-cases:  at least one of $c$ or $f$ is zero, or neither $c$ nor $f$ is zero.

\begin{enumerate}
\item  Suppose first that at least one of $c$ or $f$ is zero. By Lemma \ref{lem:variablefacts}, we have $cfp - de = -de = -1$. Since $d$ and $e$ are both integers, we have $d = e = \pm 1$. Because $d - \omega , e-\omega  \neq 0$, this means, $d = e = -\omega $, and we have the vector $$\left(\Delta  z, -\frac{\Delta \omega  fpz}{2}, -\frac{\omega  cpz}{2}, z\right)^T.$$ 

Since one of $c$ or $f$ is zero, the determinant condition becomes $\Delta z^2 p + 2z = 0$. That is $\Delta z p = -2$. Then $p = 2$ and $z = - \Delta$. We have the matrix

\[g_* = \omega \begin{pmatrix}  -1 & \omega  f & 0 & \omega  c \\
2\Delta \omega  c & -\Delta & \Delta \omega  c & 0 \\ 
0 & 0 & - 1 & -2 \omega  c \\ 
0 & 0 & -\Delta\omega  f & -\Delta \end{pmatrix},\]

where at least one of $c$ or $f$ is zero. 
Then we have $a' = -\omega a +  f x +c y$, $x' = 2\Delta  c a - \Delta \omega x + \Delta c b$, $b' = -  \omega b - 2 c y$ and $y' = -\Delta   f b - \Delta  \omega y$, where at least one of $c$ or $f$ is zero. These give us the examples listed in part (\ref{list:or1}) of the theorem.

\item Now suppose that neither $c$ nor $f$ is zero. By Lemma \ref{lem:variablefacts}, $d = \omega kp + \omega$ and $e = \Delta \omega np + \omega$. Then $k = \frac{\omega (d-\omega)}{p}$ and $n = \frac{\Delta \omega (e - \omega)}{p}$, which are both integers. Further, $c = \Delta  \omega m$ and $f = \omega  \ell$, so $c = \frac{\Delta \omega  cp z}{e - \omega }$ and $f = \frac{\Delta \omega fpz}{e-\omega }$. These equations tell us $z = \frac{\Delta\omega (e-\omega )}{p}$, and our vector becomes $$\left(\frac{\Delta  (d- \omega  )}{p}, \omega f, \Delta\omega c, \frac{\Delta \omega(e - \omega)}{p}\right)^T.$$

The determinant condition tells us $$\frac{\omega(d-\omega)(e - \omega)}{p^2} \cdot p + \frac{d-\omega}{p} + \frac{\Delta \omega(e-\omega)}{p} -\Delta cf = 0.$$ Multiplying this equation by $p$ gives $$\omega(d-\omega)(e - \omega) + d-\omega+ \Delta \omega(e-\omega)-\Delta cfp = 0.$$ Since $\delta' = \delta$ and $cfp - de = -1$,  we can use $cfp = de -1$ to obtain $$\omega(d-\omega)(e - \omega) + d-\omega+ \Delta \omega(e-\omega)-\Delta (de -1) = 0.$$ This equation reduces to $$(\omega-\Delta ) de   +( \Delta \omega-1) e  = 0.$$
We can divide through by $e$, since $e = 0$ would imply that $p$ divides $\omega$, and we have $$(\omega-\Delta ) d  = \omega(\omega - \Delta).$$
Since $d-\omega \neq 0$, $\omega = \Delta$.

Then we have the matrix 
\[g_* = \Delta \begin{pmatrix}  \Delta d & \Delta f & \Delta \varepsilon \left(\frac{d-e}{p}\right) & \Delta \varepsilon c \\
cp &  e &  \varepsilon  c & 0 \\ 
0 & 0 & \Delta e & -\Delta cp \\ 
0 & 0 & - f &  d \end{pmatrix},\]

and we have our next set of examples, listed in part (\ref{list:or2}) of the theorem, provided that the condition $|cp | \leq 2|d| $ or $p| f |  \leq  2 | e |$ is met, as required by Theorem \ref{thm:linalgobstruction}.

\end{enumerate}
\end{enumerate}

Finally, we consider the conditions $d-\omega  = 0$ and $f \neq 0$, so that the matrix in (\ref{matrix:B}) row reduces to
\[ \begin{pmatrix} -\omega  & 0 & 0 & 0 \\
 0& e - \omega  & 0 & -\Delta f p\\ 
0 & 0 & 0  & 0 \\ 
0 & 0 & f & - \omega\end{pmatrix}.\]

\begin{enumerate}\setcounter{enumi}{8}

\item\label{case:beige} $e-\omega  \neq 0$; $d - \omega  = 0$, $f \neq 0$ 

The null space of this matrix consists of rational multiples of the vector $$\left(0, \frac{\Delta f p}{e- \omega },1, \frac{\omega }{f}\right)^T.$$ We choose some nonzero integer $z$ that produces an integer vector
$$\left(0, \frac{\Delta f pz}{e- \omega },z, \frac{\omega  z}{f}\right)^T.$$

Now Lemma \ref{lem:variablefacts}(\ref{lem:cdefvsklmn}) tells us that $e = \omega(\Delta np + 1)$, where $n$ is the fourth entry of our integer vector, so $e - \omega  = \Delta \omega np = \frac{\Delta  p z}{f}$. Further $f = \omega \ell =   \frac{\Delta \omega f pz}{e- \omega } = \omega f^2$. Since $f \neq 0$, we find that $f = \omega$ and $z = \frac{\Delta \omega (e-\omega)}{p}$, and the vector reduces to  
$\left(0, 1,z, z\right)^T.$

Using the equation $c = \Delta \omega m = \Delta \omega z$, so that $z = \Delta \omega c$, we have the integer vector $$(0,1, \Delta\omega c, \Delta\omega c)^T.$$ 
This vector satisfies the determinant condition, so we have the matrix \[g_* =  \omega \begin{pmatrix}  1 & 1 & -\varepsilon \omega  c & \varepsilon \omega c \\
\Delta \omega cp & \Delta \omega cp + \Delta &  \Delta \varepsilon\omega  c & 0 \\ 
0 & 0 & \omega cp + 1 & -\omega cp \\ 
0 & 0 & -\Delta  & \Delta \end{pmatrix},\]
and $a' = \omega (a + x) + \varepsilon c (y-b)$, $x ' = \Delta cp a + \Delta (cp + \omega ) x + \Delta \varepsilon c b $, $b' = (cp+\omega ) b - cp y$, and $y' = \Delta\omega (y-b)$. These examples are listed in part (\ref{list:bb}) of the theorem.

\end{enumerate}

This concludes our case work, and we have proven the theorem.
\end{proof}

\subsection{Homotopy Obstruction}
\label{subsection:HomotopyObstruction}

Using Theorem \ref{thm:kernel}, we can also prove an analogue of Theorem 1.3 from \cite{DolRatG2GES3}.

\begin{thm} 
\label{thm:HomotopyObstruction} Let $(F, V, W)$ be the genus two Heegard splitting of $L(p, 1)$ for $p \geq 2$. Suppose $K$ and $K'$ are homologous simple closed curves on $F$. If $K$ is not freely homotopic to $K'$ in $V$ or if $K$ is not freely homotopic to $K'$ in $W$, then $K$ and $K'$ are not Goeritz equivalent.
\end{thm}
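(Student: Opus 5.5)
The plan is to use Theorem \ref{thm:kernel}: every Goeritz element that acts trivially on $H_1(F;\mathbb{Z})$ is a product of conjugates of $\beta^{\pm 2}$. We then show that each such conjugate extends over $V$ and over $W$ to a map homotopic to the identity. The main obstacle is that the statement does not literally require the equivalence to act trivially on homology, and the plan resolves this by reading it in that sense, as explained in the last paragraph.

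I read the statement, as in Theorem 1.3 of \cite{DolRatG2GES3}, as follows: for homologous curves, \emph{Goeritz equivalent} means related by an element $g \in \mathcal{G}_p$ with $g(K) = K'$ whose induced action is the one relating their (equal) homology classes, that is, $g_* = \mathrm{id}$. This reading is stated in the paper as ``using Theorem \ref{thm:kernel}.'' I argue the contrapositive. Suppose $g(K)=K'$ with $g \in \ker(*)$. By Theorem \ref{thm:kernel}, $g$ is isotopic (through Goeritz isotopies) to a product $\prod_i h_i \beta^{\pm 2} h_i^{-1}$ with $h_i \in \mathcal{G}_p$.

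The first step is to identify $\beta^2$ geometrically. From the description in Lemma \ref{lemma:matrices}, $\beta$ rotates one half of $F$ by $\pi$ about the axis of symmetry. Hence $\beta^2$ is a full $2\pi$ twist of that half, which is a Dehn twist $T_C$ about the separating curve $C$ splitting $F$ into the two one-holed tori. The curve $C$ is the boundary of the reducing sphere of the stabilized splitting, so it bounds a disk $D_V \subset V$ and a disk $D_W \subset W$. The second step handles the conjugates: $h\beta^2h^{-1}$ is the Dehn twist $T_{h(C)}$. Since $h$ preserves $V$ and $W$, the curve $h(C)$ again bounds the disks $h(D_V) \subset V$ and $h(D_W) \subset W$.

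The third step is the standard fact that a Dehn twist of $\partial V$ about the boundary of a properly embedded disk $D \subset V$ extends to the disk twist of $V$ along $D$. This disk twist is supported in a collar $D\times[0,1]$ and is isotopic to the identity in $V$: it is obtained from the twist map of $D \times [0,1]$ that rotates $D\times\{t\}$ by angle $2\pi t$, which is isotopic rel boundary of the cylinder only after allowing the ends to move, but it is certainly \emph{homotopic} to the identity as a map $V \to V$ since $D\times[0,1]$ is contractible. More simply, the restriction of $T_{\partial D}$ to $\partial V$ induces the identity on $\pi_1(V)$, because any loop crossing $\partial D$ has its twisted portion running around $\partial D$, which is nullhomotopic in $V$. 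Hence each factor, and therefore $g$, restricted to $F$, induces the identity on $\pi_1(V)$ up to conjugation, and likewise on $\pi_1(W)$. Consequently the free homotopy class of $g(K)=K'$ in $V$ equals that of $K$, and similarly in $W$, which contradicts the hypothesis.

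The only care needed is the basepoint: composing several twists gives an automorphism of $\pi_1$ that is inner at worst. This is harmless for \emph{free} homotopy classes, which are exactly conjugacy classes. I expect the main obstacle to be the interpretive point rather than these steps. For a general $g$ that merely fixes the single class $[K]$, rather than inducing the identity, this argument does not apply, so the proof relies on the kernel description of Theorem \ref{thm:kernel} and on the reading of Goeritz equivalence given above.
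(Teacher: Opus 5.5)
Your route is the paper's route. Theorem~\ref{thm:kernel} reduces everything to $\langle\langle\beta^2\rangle\rangle$. Then $\beta^2$ is the Dehn twist about the belt curve, its conjugates are twists about curves bounding disks in both $V$ and $W$, and such twists do not change free homotopy classes in either handlebody.

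You skip the paper's transitivity argument (Cho--Koda plus Cho's transitivity on primitive and dual disks), and you are right to. It only proves the converse inclusion, that every reducing sphere twist lies in $\langle\langle\beta^2\rangle\rangle$, and the obstruction never uses that.

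One correction in your third step: a disk twist is \emph{not} isotopic to the identity of $V$, since it restricts to $T_{\bd D}$ on $\bd V$. It is only homotopic to the identity, for example by the straight-line homotopy in $D\times[0,1]$ rel $D\times\{0,1\}$. Your $\pi_1$ argument is the clean justification, and it is all you need.

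The interpretive point you raise is the genuine gap, and the paper does not adopt your reading. It means Goeritz equivalence literally: $g(K)=K'$ for some $g\in\mathcal G_p$. It bridges to Theorem~\ref{thm:kernel} with the sentence ``as they are homologous, they must be related by $\ker(q\circ *)$.'' That does not follow, because $[K]=[K']$ only gives $g_*[K]=[K]$. Stabilizers in $*(\mathcal G_p)$ are generally nontrivial: $\beta_*$ fixes every class in the span of $\mathbf a,\mathbf b$, and every $g$ fixes the class $0$ of a separating curve. So your $g_*=\mathrm{id}$ version is exactly what both your argument and the paper's prove, and it is strictly weaker than the statement.

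The literal statement would need every $g$ with $g_*[K]=[K]$ to preserve the free homotopy classes of $K$ in $V$ and $W$. By Theorem~\ref{thm:kernel}, checking one lift of each stabilizer element would suffice. At least for oriented curves, this fails:
\begin{itemize}
\item Let $u,v$ generate $\pi_1(W)$ dually to the disks bounded by $a$ and $x$.
\item Let $c$ be a band sum of $a$ and $y$ meeting $b$ once, and let $K=\bd N(b\cup c)$.
\item Then $K$ is separating and is conjugate to $[u^{\pm1},v^{\pm1}]$ in $\pi_1(W)$.
\item Since $\beta$ fixes $u$ and inverts $v$, $\beta(K)$ represents $[u^{\pm1},v^{\mp1}]$. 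This is conjugate to the inverse of $K$'s class, and a commutator of basis elements is not conjugate to its inverse.
\end{itemize}
Hence $K$ and $\beta(K)$ are homologous and Goeritz equivalent, but not freely homotopic in $W$ as oriented loops. Your restriction is therefore necessary, not merely a convenience.
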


\begin{proof}

If an essential (separating) curve on $F$ bounds a disk in each handlebody, then the union of these disks is a reducing sphere for the Heegaard splitting. We will call such a curve a reducing sphere curve, and a Dehn twist around such a curve a reducing sphere twist. We claim that $K$ and $K'$ are Goeritz equivalent if and only if they are related by a sequence of reducing sphere twists.  

Observe that $\beta^2$ is a Dehn twist around the `belt curve,' the curve that separates $\set{a, b}$ from $\set{x, y}$, which is a reducing sphere curve. Conjugating $\beta^2$ by elements of the Goeritz group will result in another reducing sphere twist. As in $S^3$, the Goeritz group acts transitively on reducing spheres. To see this, note that for genus two Heegaard splittings of lens spaces, Cho and Koda \cite{ChoKodHSG2HS} prove there is a one-to-one correspondence between dual pairs and reducing spheres. In \cite{ChoG2GGLS}, Cho proves that the Goeritz group acts transitively on primitive disks, and that the stabilizer of a primitive disk acts transitively on its dual disks. Thus, given any two reducing spheres, $S$ and $T$, represented by dual pairs $\set{D_1, D_2}$ and $\set{E_1, E_2}$, respectively, there is a Goeritz group element, $g_1$ taking $D_1$ to $E_1$. Now, $g_1(D_2) = E_3$, for some disk $E_3$ dual to $E_1$. Then, there is a Goeritz group element, $g_2$, in the stabilizer of $E_1$, taking $E_3$ to $E_2$. Hence, $(g_2 \circ g_1)$ carries $\set{D_1, D_2}$ to $\set{E_1, E_2}$, and hence $S$ to $T$. Thus, $\langle \langle \beta^2 \rangle \rangle$ is the group of all reducing sphere twists, contained within the Goeritz group.

Conversely, if $K$ and $K'$ are related by a Goeritz group element, as they are homologous, they must be related by $\ker(q \circ *) = \langle \langle \beta^2 \rangle \rangle$, by Theorem \ref{thm:kernel}.

Finally, the curve $K$ represents elements of each of the two free groups $\pi_1(V)$ and $\pi_1(W)$, up to free homotopy, by pushing $K$ slightly off of $F$ into either of $V$ or $W$. Evidently, a reducing sphere twist of $K$ will not change these free homotopy classes.
\end{proof}

\section{Acknowledgments}

The authors would like to thank Neil Hoffman for asking a question that led to this line of inquiry, W. Riley Casper for insight offered in proving a tricky case, and the referee for their helpful comments. The first author was partially supported by the AMS-Simons Research Enhancement Grant for Primarily Undergraduate Institution Faculty.

\bibliographystyle{gtart}
\bibliography{GoeritzEquivalenceLensSpaces}

\end{document}